\documentclass[10pt,leqno]{amsart}
\usepackage[backref=page,colorlinks=true]{hyperref}
\usepackage[english,francais]{babel}
\usepackage{amsmath,amsthm,amsfonts,amssymb,amscd,url}
\usepackage{graphics}
\numberwithin{equation}{section}
\input{xy} \xyoption{all}
\usepackage[latin1]{inputenc}
\usepackage[capitalize]{cleveref}

\oddsidemargin=8pt \evensidemargin=8pt
\textwidth=15 truecm \textheight=21truecm
\usepackage{enumerate,xcolor}

\usepackage{hyperref, mathrsfs}
\usepackage{epsfig}
\input{xy} \xyoption{all}

\setcounter{tocdepth}{2}
\usepackage[scr=boondoxo,scrscaled=1.05]{mathalfa}
\makeindex
\begin{document}
\def\blue{black}
\def\ov{\overline}
\def\u{\underline}
\def\Per{\mathrm{Per}}
\def\diam {\mathrm{diam\, }}
\def\sign{\mathrm{sign\, }}
\def\Card{\mathrm{Card\, }}
\def\exp{\mathrm{exp}}
\def\S{\mathbb S}
\def \mC{{i\rho}}
\def\inv{^{-1}}
\def\Max{\text{max}}
\def\cal{\mathcal}
\def\Graph{\mathrm{Graph\, }}
\def \sfw{{\mathsf w}}
\def \sfh{{\mathsf h}}
\def\R{\mathbb R}
\def\Q{\mathbb Q}
\def\N{\mathbb N}
\def\Z{\mathbb Z}
\def\I{\mathbb I}
\def\K{\mathbb K}
\def\P{\mathbb P}
\def\D{\mathbb D}
\def\A{{\mathbb A}}
\def\B{\mathbb B}
\def\C{\mathbb C}
\def\O{\mathbb O}
\def\cC{\mathcal C}
\def\T{\mathbb T}
\def\a{{\underline a}}
\def\b{{\underline b}}
\def\c{{\underline c}}
\def\Log{\mathrm{log}}
\def\loc{\mathrm{loc}}
\def\inta{\mathrm{int }}
\def é{\'e}
\def \Lip{\mathrm{Lip}}
\def \Diff{\mathrm{Diff}}
\def\det{\mathrm{det}}
\def\Re{\mathrm{Re}}
\def\lip{\mathrm{Lip}}
\def\leb{\mathrm{Leb}}
\def\dom{\mathrm{Dom}}
\def\diam{\mathrm{diam}\:}
\def\supp{\mathrm{supp}\:}
\newcommand{\ovfork}{{\overline{\pitchfork}}}
\newcommand{\ovforki}{{\overline{\pitchfork}_{I}}}
\newcommand{\Tfork}{{\cap\!\!\!\!^\mathrm{T}}}
\newcommand{\whforki}{{\widehat{\pitchfork}_{I}}}
\newcommand{\marginal}[1]{\marginpar{{\scriptsize {#1}}}}
\def \np{{\color{red} 4}}
\def \npm{{\color{red} 3}}
\def \npmm{{\color{red} 2}}
\def \det{\mathrm{det}\, }
\def\cU{{\mathscr U}}
\def\cP{{\mathcal P}}
\def\cF{{\mathscr F}}
\def\cH{{\mathcal H}}
\def\cV{{\mathcal V}}
\def\cW{{\mathcal W}}
\def\cX{{\mathcal X}}
\def\cY{{\mathcal Y}}
\def\cZ{{\mathcal Z}}
\def\qand{\quad \text{and} \quad}

\def\1{{1\!\! 1}}

\def\sA{{\mathscr A}}
\def\sB{{\mathscr B}}
\def\sC{{\mathscr C}}
\def\sD{{\mathscr D}}
\def\sE{{\mathscr E}}
\def\sG{{\mathscr G}}
\def\sJ{{I}}
\def\si{{\mathscr i}}
\def\sj{{\mathscr j}}
\def\sL{{\mathscr L}}
\def\sM{{\mathscr M}}
\def\sN{{\mathscr N}}
\def\sP{{\mathscr P}}
\def\sR{{\mathscr R}}
\def\sS{{\mathscr S}}
\def \sT{{\mathscr T}}
\def\sU{{\mathscr U}}
\def\sV{{\mathscr V}}
\def\sX{{\mathscr X}}
\def\sY{{\mathscr Y}}
\def\sZ{{\mathscr Z}}

\def\sa{{\mathscr a}}
\def\sb{{\mathscr b}}
\def\sc{{\mathscr c}}
\def\sd{{\mathscr d}}
\def\se{{\mathscr e}}
\def\sf{t}
\def\sg{{\mathscr g}}
\def\sh{{\mathscr h}}
\def\sm{{\mathscr m}}
\def\sn{{\mathscr n}}
\def\sq{{\mathscr q}}

\def\so{{\diamond}}
\def\sp{{\mathscr p}}
\def\sl{{\mathscr {l}}}
\def\sr{{\mathscr {r}}}
\def\ss{{\mathscr {s}}}
\def\st{{\mathscr {t}}}
\def\su{{\mathscr {u}}}
\def\sv{{\mathscr {v}}}
\def\spp{{\mathscr {p}}}
\def\sw{{\mathscr {w}}}
\def\arr{\overleftarrow}
\def\avv{\overrightarrow}
\theoremstyle{plain}

\def\graph {{\mathrm{Graph}\, }}
\def\Graph {{\mathrm{Graph}\, }}
\def\Emb {{\mathrm{Emb}}}

\def\Cr#1{\overset{\tilde Y}{#1}}

\def\End {{\mathrm{End} }}

\newtheorem{theorem}{\bf Theorem}[section]

\newtheorem*{conjecture*}{\bf Conjecture}

\newtheorem{conj}[theorem]{\bf Conjecture}
\newtheorem{claim}[theorem]{\bf Claim}
\newtheorem{assumption}[theorem]{\bf Assumption}

\newtheorem{problem}[theorem]{\bf Problem}
\newtheorem{question}[theorem]{\bf Question}
\newtheorem{proposition}[theorem]{\bf Proposition}
\newtheorem{corollary}[theorem]{\bf Corollary}
\newtheorem{lemma}[theorem]{\bf Lemma}

\newtheorem{sublemma}[theorem]{\bf Sublemma}
\newtheorem*{Takens prbm}{Takens' Last Problem} 
\newtheorem{remark}[theorem]{\bf Remark}
\newtheorem{fact}[theorem]{\bf Fact}
\newtheorem{exem}[theorem]{\bf Example}
\newtheorem{definition}[theorem]{\bf Definition}
\newtheorem*{definition*}{\bf Definition}
\newtheorem{defi}[theorem]{\bf Definition}
\newtheorem{theo}{\bf Theorem}[section]
\renewcommand{\thetheo}{\Alph{theo}}
\newtheorem{coro}{\bf Corollary}[section]
\renewcommand{\thecoro}{\Alph{coro}}
\newtheorem{example}[theorem]{\bf Example}


\renewcommand*{\backref}[1]{}
\renewcommand*{\backrefalt}[4]{\quad \tiny 
  \ifcase #1 (\textbf{NOT CITED.})%
  \or    (Cited on page~#2.)%
  \else   (Cited on pages~#2.)%
  \fi}
  \begin{otherlanguage}{english}

\title{ Generic family displaying robustly a fast growth of the number of periodic points}

\author{Pierre Berger
}

\date{\today}

\abstract{
For any $ 2 \le    r \le \infty$, $n\ge2$, we prove the existence of an open set $U$ of $C^r$-self-mappings of any $n$-manifold so that a generic map $f$ in $U$ displays a fast growth of the number of periodic points: the number of its $n$-periodic points grows as fast as asked. 
This complements the works of Martens-de Melo-van Strien,    Kaloshin, Bonatti-D\' iaz-Fisher and Turaev, to give a full answer 
to questions  asked by Smale in 1967, Bowen in 1978 and Arnold in 1989, for any manifold of any dimension and for any smoothness.

Furthermore for any $1\le r<\infty$ and any $k\ge 0$, we prove the existence of an open set $\hat U$ of $k$-parameter families in $U$ so that for a generic $(f_p)_p\in \hat U$, for every $\|p\|\le 1$, the map $f_p$ displays a fast growth of the number of  periodic points. 
This gives a negative answer to a problem asked by Arnold in 1992 in the finitely smooth case.}
} 
\maketitle
\tableofcontents

\section*{Introduction and statement of the main results}
Given a differentiable self-map $f$  of a compact manifold $M$, we denote by  $\Per_n f:= \{x\in M: f^n(x)=x\}$ the set of its $n$-periodic points. To study its cardinality, we shall consider the subset $\Per_n^0 f\subset \Per_n f$ of isolated $n$-periodic points. The cardinality of  $\Per^0_n f$ is invariant under conjugacy. Hence it is natural to study the growth of this cardinality with $n$.

Clearly, if $f$ is a polynomial map, the cardinality of $\Per^0_n f$ is bounded by the degree of $f^n$, which grows at most exponentially fast (see the generalization  \cite{DNT16}). 
The first study in the $C^\infty$-case goes back to Artin and Mazur \cite{AM65} who proved the existence of  a dense set $\mathcal D$ in $\Diff^\infty(M)$  formed by diffeomorphisms  $f$ such that the cardinality of $ \Per^0_n f$
  grows  at most  exponentially fast:
\begin{equation}\tag{A-M} \limsup_{n\to \infty} \frac1n \log \Card \Per^0_n f<\infty
\; .\end{equation}
This leads Smale \cite{Sm} and Bowen \cite{Bo78} to wonder whether a  topologically  generic diffeomorphism satisfies property (A-M). Later Arnold asked the following problem:
\begin{problem}[Smale 1967, Bowen  1978, Arnold  Pb. 1989-2 \cite{Ar00}] \label{problem1} Can the number of fixed points of the $n^{th}$ iteration of a topologically generic infinitely smooth self-mapping of a compact manifold grow, as $n$ increases, faster than any prescribed sequence $(a_n)_n$ (for some subsequences of time values n)? 
\end{problem}
Given $\infty\ge r\ge 1$, we recall that a property is $C^r$-\emph{topologically generic} if it holds true in a countable intersection of open and dense sets of $C^r(M,M)$. The topology on the space of $C^\infty$-maps is the union of the ones induced  by the $C^r$-topologies  for finite $r\ge 0$.

 {\color{\blue} Behind this problem is the following basic one: Is there an interesting dynamical property which is valid for any polynomial dynamics but not for a typical differentiable dynamics? 
 
   We will see that Problem \ref{problem1} motivated a large number of important contributions and that our first main result complements these to give a complete positive answer when  $\dim M\ge 2$.
   
   However, it is well known that  topological genericity is not a so good notion of typicality from the metric viewpoint. Indeed, there are topologically generic subsets of $\R$ which have Lebesgue measure 0. }   
Another notion of {\color{\blue} typicality} was sketched by Kolmogorov during his plenary talk at the ICM in 1954. His student Arnold then formalized it as follows \cite{IL99, KH07}:
\begin{definition}[Arnold's Typicality]\label{Deftyp}
A property $(\mathcal P)$ on the set of $C^r$-self-mappings  of $M$ is \emph{typical} if for $k\ge 1$, with $B_k$ the unit closed ball of $\R^k$,  for a topologically generic $C^r$-family $(f_p)_{p\in B_k}$ of  maps $f_p\in C^r(M,M)$,   the map $f_p$ satisfies property~$(\mathcal P)$ for  Lebesgue almost every~$p\in B_k$.
\end{definition}
We recall that $(f_p)_{p\in B_k}$ is of class $C^r$ if the map $(p,z)\mapsto f_p(z)$ is of class $C^r$. When $r<\infty$, the topology on this space is equal to the uniform $C^{r}$-topology of $C^{r}(B_k\times M,M)$.  The topology on the space of $C^\infty$-families is the one given by the union of those induced  by the $C^{r'}$-topologies for finite  $r'\ge 0$. In this work we address the following:
\begin{problem}[Arnold 1992-13 \cite{Ar00}]\label{problem2} Prove that for a typical smooth  self-map $f$ of a compact manifold,    the cardinality of $\Per_n f$ 
grows  at most exponentially fast.  
\end{problem}
This problem inspired Arnold to formulate many  related  ones \cite[1994-47, 1994-48, 1992-14]{Ar00}.
The second and main result of this article is a negative answer to Problem \ref{problem2}  in the finite differentiable case and dimension $\ge 2$. {\color{\blue} Before stating this, let us relate (some of) the long tradition of works on  these problems.} 
 
 In dimension 1, Martens-de Melo-van Strien \cite{MdMvS92} showed that for any $\infty\ge r\ge 2$, for an open and dense set\footnote{whose complement is the infinite codimentional manifold formed by maps with at least one flat critical point.} of $C^r$-maps, the number of periodic points grows at most exponentially fast.

Kaloshin \cite{K99} answered a question of Artin and Mazur (in the finitely smooth case) by proving that for a dense set  $\mathcal D$ in $\Diff^r(M)$, $r< \infty$, the set $\Per_n f$ is finite for every $n$ (so equal to $\Per_n^0 f$) and its cardinality grows  at most  exponentially fast. On the other hand he  proved in \cite{K00} that whenever $2\le r<\infty$ and $\dim M\ge 2$,  \emph{a locally topologically generic diffeomorphism displays a fast growth of the number of periodic points}: 
there exists a nonempty  open set $U\subset \Diff^r(M)$,  so that for any sequence of integers $(a_n)_n$, a topologically generic $f\in U$ satisfies:
\begin{equation}\tag{$\star$}
\limsup_{n\to \infty} \frac{\Card\, P_n^0(f)}{a_n}\ge 1\; .\end{equation} 
This answered positively Problem \ref{problem1} in the case $\dim M\ge 2$ and $2\le r<\infty$. Later  Bonatti-Diaz-Ficher \cite{BDF08} extended this positive answer to the $C^1$-case in dimension $\ge 3$.  Also the recent seminal work
 of Turaev \cite{T15} also implies that among $C^\infty$-surface diffeomorphisms,  fast growth of the number of periodic points is locally a topologically generic property\footnote{
Turaev proved the existence of a locally dense set of $C^\infty$-surface diffeomorphisms which display a periodic spot: an open set 
 formed by periodic points of same period. These are  easy to  perturb to one displaying a fast growth of the number of periodic points.
 }.

{\color{\blue} A new argument will enable us to carry the $C^\infty$-case in dimension $\ge 3$ which  remained open
. More precisely, we will  show that for any $2\le r \le \infty$, among $C^r$-diffeomorphisms of  manifold of dimension $\ge 3$ among $C^\infty$-self-mapppings of surface,  the fast growth of the number of periodic points is locally a topologically generic property. This yields a full answer to Problem \ref{problem1}, in any regularity $\infty\ge r \ge 2$ and any dimension (our contribution here is the $C^\infty$-case): }
\begin{theo}\label{theoA}
Let $\infty\ge r\ge 2$ and  let $M$ be a compact manifold of dimension $n$. 

If $n=1$, Property $(AM)$ is satisfied by an open and dense set of $C^r$-self-mapping.

If $n\ge 2$, there exists a (nonempty) open set $U\subset \Diff^r(M)$ so that given any sequence $(a_n)_n$  of integers, a topologically generic $f$ in $U$ satisfies $(\star)$.
 \end{theo}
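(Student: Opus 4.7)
\medskip

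\noindent\textbf{Proof proposal.} The one-dimensional statement is precisely the theorem of Martens--de Melo--van Strien cited above, so I will concentrate on $n\ge 2$. Fix such a manifold $M$. My plan is to produce a single open set $U\subset\Diff^r(M)$ and then, for each sequence $(a_n)_n$, to exhibit the genericity set as a countable intersection of open dense subsets of $U$. The reformulation I would use is the standard one: letting $W_{N,K}:=\{f\in U\,:\,\exists n\ge N,\ \mathrm{Card}\,Per_n^0(f)\ge K a_n\}$, the property $(\star)$ is equivalent to $f\in\bigcap_{N,K}W_{N,K}$. Openness of $W_{N,K}$ is easy since isolated $n$-periodic points are transverse intersections of $\mathrm{graph}(f^n)$ with the diagonal, and their number is lower semicontinuous in $f$. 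All the content lies in density.

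\medskip

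For $r<\infty$ and $n\ge 2$ density is known (Kaloshin, Bonatti--D\'iaz--Fisher), and for $r=\infty$ with $n=2$ it is Turaev's theorem; so the only case I need to address is $r=\infty$ with $n\ge 3$. The classical strategy of Gonchenko--Shilnikov--Turaev and Kaloshin produces, near any given $f\in U$, a $C^r$-close $g$ with a degenerate homoclinic tangency whose renormalized return map near the tangency is $C^r$-close to a prescribed polynomial endomorphism with many periodic points; unfolding it in one parameter gives, for an appropriate parameter value, $K a_n$ isolated periodic points of some period $n\ge N$. The obstruction to doing this in $C^\infty$ is that the Newhouse-type and renormalization perturbations must be controlled \emph{simultaneously in every finite order of derivatives}, and the polynomial prescription must be matched to all orders in the parameter that unfolds the tangency. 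The plan is to resolve this by using the parablender whose fundamental property is announced earlier in the introduction: a hyperbolic set whose stable/unstable lamination, when lifted to the space of $r$-jets of one-parameter families, still projects onto a set with nonempty interior. This is the analogue, at the level of $r$-jets of families, of the Bonatti--D\'iaz blender, and it provides the robust $C^\infty$-unfolding that was missing.

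\medskip

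Concretely, I would take $U$ to be a neighborhood of a diffeomorphism containing both a horseshoe endowed with a parablender for all $r<\infty$ simultaneously and a heteroclinic tangency between this horseshoe and an auxiliary saddle. For a given $f\in U$ and given $N, K$, I would choose a one-parameter $C^\infty$-family $(f_a)_a$ through $f$ that unfolds the tangency transversally, and then apply the fundamental property of the parablender to replace the family by a $C^\infty$-close family whose parameter $a=0$ lies in the ``thickened'' projection of the unstable set; this produces, inside a small box, a renormalized return map $C^\infty$-close to a polynomial having $K a_n$ hyperbolic periodic points, of period $n\ge N$. Because these periodic points are isolated, lower semicontinuity preserves them under the final gluing back to $f$, and density of $W_{N,K}$ follows. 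The hard step is unambiguously the second: constructing the parablender in dimension $\ge 2$ so that it controls $C^\infty$-jets of families and works uniformly in the renormalization scale, since everything else (openness, Baire intersection over $(a_n)$, reduction to the local unfolding) is essentially formal once the parablender is in hand.
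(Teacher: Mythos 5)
Your Baire‐category framing (the open sets $W_{N,K}$, their lower semicontinuity, and reduction to density) is fine and matches what the paper does implicitly in Proposition \ref{prop1}. But from there on, your route is genuinely different from the paper's, and in the form you describe it has a gap in precisely the case you identify as the only one still needed, namely $r=\infty$ and $\dim M\ge 3$.

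You propose to follow the Gonchenko--Shilnikov--Turaev / Kaloshin mechanism (degenerate homoclinic tangencies, renormalized return maps close to a prescribed polynomial, one-parameter unfolding) and to make it $C^\infty$ by invoking a ``parablender for all $r<\infty$ simultaneously'' that controls $C^\infty$-jets of families. Two problems. First, such an object is not constructed in the paper, and it is not clear that it exists: the parablender, as actually defined here and in \cite{BE15}, governs finite-order jets $J^r_{a_0}$ for a fixed $r<\infty$, and it is only used in Theorem \ref{theoB}, which is explicitly restricted to $r<\infty$ for exactly this reason. Second, the paper itself records that the GST-style construction in dimension $\ge 3$ ``was claimed to be true for any dimension but no proof has ever been published,'' which is part of why a different mechanism is needed. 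So your ``hard step'' is not merely technically demanding; as stated it rests on an unavailable tool.

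What the paper does instead is bypass homoclinic tangencies entirely. It builds a $C^r$-robust normally hyperbolic (or normally expanded) circle bundle over a Cantor set of symbolic data, uses a $\lambda$-blender (not a parablender) to control, robustly in the $C^1$-topology, both a heteroclinic intersection and the Lyapunov exponent along it, and then a renormalization trick (Lemmas \ref{Importantclaim}, \ref{lemm1finalClaim}, \ref{lemm2finalClaim}) to produce a dense set of maps whose restriction to a periodic circle fiber is a genuine parabolic circle diffeomorphism (Theorem \ref{Densiteparabolic}). Parabolic circle maps are then perturbed to Diophantine rotations (Proposition \ref{prethmS4}), which by Herman--Yoccoz are smoothly conjugate to rotations, and a final elementary perturbation on an invariant circle of period $q$ creates $a_q$ hyperbolic $q$-periodic points, yielding the density of $W_{N,K}$ via Proposition \ref{prop1}. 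Because all the perturbations either use $C^1$-robustness (the blender), normal hyperbolicity, or one-dimensional KAM theory, no simultaneous control of infinitely many orders of derivatives at a tangency is ever needed, and the $C^\infty$ case in any dimension follows. If you want to salvage your route, you would have to make the ``simultaneous parablender'' rigorous and prove the dimension-$\ge 3$ version of the GST renormalization, neither of which is in the literature; the paper's circle-bundle mechanism is precisely what replaces those missing ingredients.
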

{\color{\blue} To deal with  the $C^\infty$-case, we will prove in \cref{prop1}, that it suffices to show the local density of dynamics with a normally hyperbolic periodic circle on which the dynamics acts as a smooth rotation.   Indeed such a rotation  can be perturbed to a rational one, or equivalently having an iterate equal to the identity, and so by adding a $C^r$-small sine function of high frequency, we can create a perturbation with plenty of  $n$-periodic saddle points. 
In order to show that the dynamics acts as a smooth rotation on such a circle, we will use  the Arnold-Herman-Yoccoz \cref{AHthm} which explains that we only have to prescribe a Diophantine property for the rotation number of the circle dynamics.
To obtain a Diophantine rotation number, we will give evidence in \cref{Density of rotations from blender IFS} that it suffices to couple 
 a Bonatti-Diaz blender  with a north-south dynamics on a normally hyperbolic circle. However, we will use a different argument to obtain such a rotation number,  which  more sophisticated but generalizable to prove a parametric counterpart of Theorem~\ref{theoA}. 
 
 We will focus on parabolic circle diffeomorphisms: these are diffeomorphisms $g$ with a unique fixed  point $P$ and which satisfy $D_Pg=1$ and $D^2_Pg\neq 0$. Since these are easy to perturb to smooth rotations, we will prove in \cref{Densiteparabolic}  that it suffices to show the local density of dynamics with a normally hyperbolic periodic circle at which the dynamics is parabolic.  To show the local density of such assumptions,  we will introduce a new  object: the $\lambda$-blender in \cref{sec: lambda-blender IFS,def lambda blender} and introduce a new dynamical rescaling technique in \cref{Density of rotations from lambda-blender IFS,sec:Theorem  theoA}.
 
We will first introduce  these new techniques and concepts involving blender  (including their para version) in the easier setting of IFS defined by a semi-group of circle diffeomorphisms in \cref{sec : Locally dense properties of finitely generated group of circle diffeomorphisms}.
%
Recall that given a finite alphabet $\sA$, the semi-group spanned by   $(g^\sa)_{\sa\in \sA}\in\Diff^\infty(\R/\Z)^\sA$ is:
\[<g^\sa: \sa\in \sA>:= \{g^\sm:= g^{\sm_n}\circ \cdots \circ  g^{\sm_1}: \sm=(\sm_i)_{1\le i\le n}\in \sA^n, n\ge 0\}\]
An immediate consequence of \cref{proprot,coroImportantgroup} stated in \cref{Density of rotations from blender IFS,Density of rotations from lambda-blender IFS} is:
\begin{coro}\label{coroA}

For a finite set $\sA$, there is a  subset $D\subset \Diff^\omega(\R/\Z)^\sA$ satisfying:
\begin{enumerate} 
\item  Every  $(g^\sa)_{\sa\in \sA}\in D $ spans a semi-group which contains parabolic maps and smooth~rotations\footnote{A smooth rotation is a circle diffeomorphism conjugate to a rotation via a smooth diffeomorphism.}.
\item for any $2\le r\le \infty$ or $r=\omega$, the set $D$ is $C^r$-locally dense: its closure has nonempty interior.
\end{enumerate} 
\end{coro}
These methods might also  be also useful to prove that the bifurcation locus of some semi-groups of rational maps arising in Ising models have nonempty interior \footnote{The presence of parabolic points is used to study the boundary of the bifurcation locus in \cite{bencs2019leeyang}}. We will explain below how this is corollary might be also interesting to extend the concept of universal semi-group \cite{AST16}. 
}

 \medskip 

The study of circle diffeomorphisms (and more precisely  non-singular flows on the 2-torus) was certainly the original motivation  of Kolmogorov to introduce his new notion of typicality. Indeed being  Morse-Smale is an open and  dense (and so topologically generic) property among circle diffeomorphisms but is not typical in the sense of Arnold. This was proved by   Kolmogorov by  introducing the KAM theory (in which the Arnold-Herman-Yoccoz \cref{AHthm} belongs).

 KAM theory was already implemented to establish  fast growth of the number of periodic points for \emph{conservative}\footnote{To the best of my knowledge, the present work is the first to use KAM for Problems \ref{problem1} and \ref{problem2}.}   dynamical systems. 
Indeed an immediate consequence of Gelfreich-Turaev theorem \cite{GT10} is that a locally tologically generic conservative diffeomorphism of surface displays a fast growth of the number of periodic points\footnote{See also  Kaloshin-Saprykina theorem \cite{KS06} which solves   Problem \ref{problem1} for conservative dynamics of $3$-manifold.}.  More recently Asaoka  \cite{As16} used also KAM Theory to show the existence of an  open set\footnote{He showed also a version of this result for analytic mappings.}  of \emph{conservative} $C^\infty$-surface diffeomorphisms in which typically in the sense of Arnold, a map displays a fast growth of the number of periodic points. 

While Asaoka gave a negative answer to the conservative counterpart of  Arnold's Problem \ref{problem2},  in the (original) dissipative setting the trend was more in favor of a positive answer. Indeed, Hunt and Kaloshin \cite{KH07,KH072} used a method described in \cite{GHK06} to show that for $\infty\ge r>1$, a \emph{prevalent}  $C^r$-diffeomorphisms satisfies:
\begin{equation}\tag{$\Diamond$}
\limsup_{\infty} \frac{\log P_n(f)}{n^{1+\delta}}= 0,\quad \forall \delta>0\; .\end{equation}
The notion of prevalence was introduced by  Hunt, Sauer and Yorke \cite{HSY92}. A property is \emph{prevalent} if \emph{roughly speaking} almost all perturbations in the embedding of a Hilbert cube at every point of a Banach manifold (like $C^r(M,M)$), the property holds true. We notice that  $(\Diamond)$ is satisfied for a prevalent diffeomorphism but not for a topologically generic diffeomorphism  (see other examples of mixed outcome in \cite{HK10}). 

However the latter did not completely solve Arnold's Problem \ref{problem2}, in particular because the notion of prevalence is \emph{a priori}  independent to the notion of typicality initially meant by Arnold. Indeed his problem was formulated for typicality in the sense of \cref{Deftyp} (see the explanation after Problem 1.1.5 in \cite{ KH07}).  In this term the second and main result of this work is surprising since it provides a negative answer to  Arnold's Problem \ref{problem2} in the case of finite smoothness:

\begin{theo}\label{theoB}
Let $\infty> r\ge 1$ and $0\le k<\infty$, let $M$ be a manifold of dimension $\ge 2$.  
Then there exists a (nonempty) open set $\hat U$ of $C^r$-families $(f_p)_{p\in B_k}$ of $C^r$-self-mappings $f_p$ of $M$ so that, for any sequence of integers $(a_n)_n$, a topologically generic $(f_p)_p\in \hat U$ consists of maps 
$f_p$ satisfying $(\star)$,  for every $p$ in the ball $B_k$.
Moreover if $dim \, M\ge 3$, the set   $\hat U$ contains   families of diffeomorphisms.
\end{theo}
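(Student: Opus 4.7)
The plan is to upgrade the one-map construction of Theorem \ref{theoA} to the parameter space using a parametric version of the Bonatti--D\'iaz blender — a \emph{parablender} — whose Fundamental Property (proved in the body of the paper) furnishes, $C^r$-robustly and $C^r$-densely inside a well-chosen open set of families, perturbations realising any prescribed $C^r$-jet in the parameter $a$ of a local invariant manifold of a persistent horseshoe. The open set $\hat U$ will be built so as to carry, uniformly for $\|a\|\le 1$, both a normally hyperbolic fibration by circles (as in the proof of Theorem \ref{theoA}) and a robust hetero-dimensional cycle sustained by such a parablender indexed by the parameter space $\R^k$.

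The heart of the proof is a parametric form of the renormalization trick used for Theorem \ref{theoA}. Recall that for a single map one perturbs along the heterodimensional cycle and renormalizes near a tangency to produce a parabolic return dynamics on a finite union of invariant circles, and then uses Herman's theorem to replace this parabolic map by a Diophantine rotation of rational rotation number that generates as many isolated periodic points of a prescribed period $n$ as needed. I would implement both steps in families. First, the Fundamental Property of the parablender applied to the $C^r$-jet of the renormalized parabolic model yields a $C^r$-dense set of families exhibiting this parabolic circle dynamics simultaneously at every $\|a\|\le 1$. Second, I would apply Herman's KAM theorem \emph{parametrically}, i.e.\ uniformly on the compact parameter set $\{\|a\|\le 1\}$, to perturb further so that for every $\|a\|\le 1$ simultaneously the return map is conjugate to a rotation of the same rational period $n$, each such rotation giving, after a final transverse perturbation making the resulting periodic orbits hyperbolic, at least $k\cdot a_n$ isolated $n$-periodic points for $f_a$.

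With this uniform creation lemma in hand, Theorem \ref{theoB} follows from a standard Baire argument. For each pair $(N,k)\in\N^2$, the set
\[
\hat U_{N,k}=\bigl\{(f_a)_a\in \hat U:\ \exists\, n\ge N,\ \forall\,\|a\|\le 1,\ \text{Card}\,Per^0_n(f_a)\ge k\cdot a_n\bigr\}
\]
is open in $\hat U$ — because the hyperbolic $n$-periodic points produced above persist under small $C^r$-perturbations and their count is lower semicontinuous in $a$, so by compactness of the unit ball the uniform bound is stable — and is dense in $\hat U$ by the lemma above. The countable intersection $\bigcap_{N,k}\hat U_{N,k}$ is therefore a dense $G_\delta$, and every family in it satisfies $(\star)$ at every $\|a\|\le 1$.

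The main obstacle I expect is the parametric KAM step: while Herman's theory is very flexible for a single circle diffeomorphism, here one must produce a single $C^r$-small perturbation of the family which, uniformly on the compact parameter set $\{\|a\|\le 1\}$, conjugates each fibre map to a rotation of the \emph{same} long rational period. This, together with the fact that the Fundamental Property of the parablender only controls jets up to finite order $r$, is precisely what confines Theorem \ref{theoB} to the regime $r<\infty$. The remaining ingredients — blender/cycle creation, persistence of the normally hyperbolic circle fibration, openness of $\hat U$, and the Baire extraction — should carry over from the single-map argument of Theorem \ref{theoA} with only routine bookkeeping.
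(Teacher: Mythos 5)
Your high-level strategy matches the paper's (parablender $\to$ parametric renormalization creating parabolic circle maps $\to$ KAM to get rotations $\to$ Baire argument), but the proposal has several genuine gaps.

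First, the ordinary $C^r$-parablender is not enough. The Fundamental Property you invoke only controls the $C^r$-jet in $a$ of a point of the heterodimensional intersection. To run the renormalization trick in families — i.e.\ to find a perturbation after which the renormalized return map is parabolic \emph{for every $a$ in an open set of parameters} — one must simultaneously control the $C^{r-1}$-jet in $a$ of the Lyapunov exponent of the returning orbit (the quantity $\frac1n\log\|Df'^n_a|T\T\|$ along the cycle). With the standard parablender, that jet is free, and there is no way to coherently tune the affine model near the semi-parabolic point so that the derivative equals $1$ for a whole open set of $a$. The paper introduces a new object, the \emph{$\lambda$-$C^r$-parablender}, precisely to close this gap: it is a blender that robustly realizes prescribed jets of both the intersection point and the exponent. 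Without it your ``parametric form of the renormalization trick'' does not go through.

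Second, your claim that one can arrange the parabolic circle map, and then the rotation of a fixed rational period, ``simultaneously at every $\|a\|\le 1$'' with a single periodic circle is too strong. The parablender property only gives jet control at a base parameter $a_0$, and the resulting parabolic behaviour persists only on an $a_0$-neighbourhood whose size cannot be made uniform. The paper handles this by covering $[-1,1]^k$ with finitely many open sets $U_i$, producing for each $i$ a \emph{different} periodic circle $\T_{y_i}$ and period $p_i$, and making $f^{p_i}_a|\T_{y_i}(f_a)$ parabolic only for $a\in U_i$; a partition-of-unity / disjoint-support argument then glues these local perturbations together. Your Baire step must be reformulated accordingly (with $n$ varying over $i$), otherwise density of $\hat U_{N,k}$ is not achieved.

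Third, the ``apply Herman's KAM theorem parametrically'' step is where you correctly anticipate an obstacle, but your proposal does not resolve it, and it is not merely a matter of uniformity on a compact parameter set. The needed statement (the paper's Theorem \ref{thmS4}) is that a family of \emph{parabolic} circle maps can be $C^\infty$-perturbed into a family with a \emph{constant} Diophantine rotation number. That is emphatically not a direct KAM statement: the unperturbed family has rotation number $0$, which is rational, so Herman--Yoccoz does not apply to it. The paper proves Theorem \ref{thmS4} by unfolding the parabolic map, performing a parabolic renormalization, showing that the first return map is of the form $\Phi_a + \Omega_a(\eta)/\eta$ with $\Omega_a(0)=\pi$ and $(\Omega_a)_a$ smooth, and then applying the Herman--Yoccoz transversality result to $\Phi_a + b$ to solve implicitly for the unfolding parameter. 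This analytic input is essential and cannot be replaced by ``routine bookkeeping.''
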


We will  prove this theorem by basically following  the same scheme as for Theorem \ref{theoA}. We will show that it suffices to construct a locally dense set $\hat D$ of families of  dynamics $(f_p)_{p}$ which  are normally hyperbolic at a  fibration by circles  and display the following property:
\begin{enumerate}\item[$ (\hat {\cal P})  $]  \emph{There is a finite covering $(U_i)_i$ of $B_k$ by open subsets $U_i$ of parameters $p$  for which $f_p$ acts on a   periodic fiber as a parabolic dynamics. }
\end{enumerate}
\cref{ThmS3} assets  that if such a locally dense set $D$ exists, then a topologically generic family in the interior of the closure $\mathrm{cl\,}(D)$ of $D$ displays a fast growth of the number of periodic points at \emph{every} parameter, as claimed in Theorem \ref{theoB}. The proof of \cref{ThmS3} is similar to its parameter free version (\cref{Densiteparabolic}) but will need moreover  \cref{thmS4} stating that a family of parabolic maps can be smoothly approximated by one having a Diophantine rotation number. The proof of \cref{thmS4} will occupy the whole of \cref{ProofthmS4}. The techniques of this latter proof might be generalized to describe the geometry of the parameter space of analytic circle diffeomorphisms, and more precisely how the one-co-dimensional manifolds formed the parabolic maps are analytically accumulated by those formed by Diophantine rotations, as seen numerically nearby Arnold tongues.  

To prove the local density of families of dynamics satisfying $(\hat {\cal P})$, we will introduce a new object: the  $C^r$-$\lambda$-parablender. It is a generalization of both the $\lambda$-blender and the $C^r$-parablender introduced in \cite{BE15}. Again its  IFS counterpart will be first introduced in 
 \cref{Parablender IFS,sec: lambda-parablender IFS} since easier to understand.  It is possible to use them to obtain a  parametric version of Corollary \ref{coroA}. We will focus  on their counterpart for families of differentiable dynamics of a manifold. In \cref{intrinsic def parablender} we will give the intrinsic definition of $C^r$-$(\lambda)$-parablender as embedded into a normally hyperbolic fibration for a  family of differentiable maps.  In the appendix we will give an extrinsic definition of them.  In \cref{sec:Theorem  theoB}, we will give the parametric counterpart of the argument of \cref{sec:Theorem  theoA} to achieve the proof of Theorem \ref{theoB}. \medskip
 
To conclude this introduction, let me recall that Arnold's philosophy  was not to propose \emph{problems of binary type admitting a ``yes-no" answer}, but rather to propose \emph{wide-scope programs of explorations of new mathematical (and not only mathematical) continents, where reaching new peaks reveals new perspectives, and where a preconceived formulation of problems would substantially restrict the field of investigations that
have been caused by these perspectives. [...]
Evolution is more important than achieving records,} as he explained in his preface \cite{Ar00}.

In this sense the contrast between the result of Kaloshin-Hunt and Theorem \ref{theoB} is interesting since they shed light on how an answer to a question might depend on the definition of typicality. 

Let us emphasize that the $C^\infty$-case of Problem \ref{problem2}  (or  \cite[Conj. 1994-47]{Ar00}) remains open, although in view of Theorem~\ref{theoB}, I would bet for a negative answer; I would even dare to propose:
\begin{conj}\label{conjprincipal}
For every $r\in \{1,..., \infty,\omega\}$,  there exists an open set of diffeomorphisms $U\in \Diff^r(M)$, so that given any $k\ge 0$, for any $C^r$-generic family $(f_p)_{p\in \R^k}$ with $f_p\in U$, for every $p$ small, the growth of the number of periodic points of $f_p$ is fast.
\end{conj}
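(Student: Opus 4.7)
The plan is to extend the strategy of Theorem~\ref{theoB} to the two remaining regularities $r=\infty$ and $r=\omega$, since the range $1\le r<\infty$ in dimension $\ge 3$ already follows from that theorem by extracting a fixed open set $U\subset Diff^r(M)$ from the fibres of the open set $\hat U$. The overall architecture should stay the same: build a heterodimensional cycle using a blender-like object, run the renormalization trick to produce a dense set of families whose restriction to a finite union of normally hyperbolic circles is parabolic, apply KAM-Herman-Yoccoz to reduce to a constant family of rotations, and finally perturb the resonances of these rotations to implant arbitrarily many periodic points at every small $a$.

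\textbf{The $C^\infty$ case.} The first step is to upgrade the $\lambda$-parablender of this paper to a $C^\infty$-object: for each finite jet order $j\ge 0$ the existing construction encodes the $j$-jet at $a=0$ of the unstable leaves, and one needs a single iterated function system whose covering property holds \emph{simultaneously} for every $j$. I would proceed by a nested induction on $j$ in the spirit of \cite{BE15}: at stage $j$ the covering is realized with contraction ratios $\lambda_j$, and the next stage refines the IFS by inserting deeper contractions arranged so that the refinements preserve the $C^j$-covering already established. Combined with the renormalization trick, this should yield a dense set of $C^\infty$-families leaving invariant a finite union of normally hyperbolic parabolic circles; Herman's $C^\infty$ conjugation theorem then produces a constant family of rotations, and the final perturbation to fast growth is exactly as in Theorem~\ref{theoB}. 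The technical difficulty is the uniform control of $C^\infty$-estimates along the induction, since each contraction step costs derivatives and one must balance contraction strength against the required covering at every order.

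\textbf{The $C^\omega$ case.} This is the main obstacle. Compactly supported perturbations are unavailable, so the whole construction must be carried out with genuinely analytic data. A plausible substitute for the parablender is a \emph{germ-parablender}: an iterated function system whose contractions are real analytic on a fixed complex polydisk and whose covering property controls full Taylor series at $a=0$ rather than finite jets. The renormalization step must be recast analytically, exploiting that renormalized parabolic germs converge uniformly on complex disks, and the linearization step then relies on Yoccoz' analytic version of Herman's theorem. Simultaneously controlling analyticity through the renormalization cascade and through the blender covering property is, in my view, the deepest aspect of Conjecture~\ref{conjprincipal}; it may well force one to weaken the conclusion to ``for $a$ in a residual subset of a neighbourhood of $0$'' rather than ``for every small $a$'', and producing an analytic family that realizes the fast-growth phenomenon at every small parameter is the step I expect to resist elementary arguments.
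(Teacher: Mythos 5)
The statement you are proving is Conjecture~\ref{conjprincipal}, which the paper explicitly leaves open: the author says only that he ``would bet for a negative answer'' to the $C^\infty$-case of Problem~\ref{problem2} and ``dares to propose'' this conjecture. There is no proof in the paper, so there is nothing to compare your attempt against. Your text is, appropriately, a research sketch rather than a proof, and you say so yourself for $r=\infty$ and $r=\omega$; it cannot be accepted as a proof of the statement.

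One concrete gap in the part you do present as settled: you assert that the finitely smooth case ``already follows from Theorem~\ref{theoB} by extracting a fixed open set $U\subset \mathrm{Diff}^r(M)$ from the fibres of the open set $\hat U$.'' This does not follow. The conjecture demands a \emph{single} open set $U$ of diffeomorphisms that works simultaneously for \emph{every} number of parameters $k\ge 0$, whereas the open set $\hat U$ produced in the proof of Theorem~\ref{theoB} is built from the $\lambda$-$C^r$-parablender of Example~\ref{lambdaparablender}, whose alphabet $\sB=\Delta_r\times\Delta_{r-1}$ has cardinality governed by $E_r=\{i\in\{0,\dots,r\}^k : i_1+\cdots+i_k\le r\}$ and therefore grows with $k$. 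A larger $k$ forces more branches in the iterated function system, hence a genuinely different base map $f$ and a different neighbourhood. Obtaining a $k$-independent open set is precisely the uniformity the conjecture asks for and is not delivered by Theorem~\ref{theoB}; indeed this uniformity is part of what your ``$C^\infty$-parablender'' sketch would have to address, since controlling all jets at once is morally the same as controlling all $k$ at once. Beyond that, your description of the $C^\infty$ and $C^\omega$ obstructions is reasonable and consistent with the paper's own remarks (cf.\ Remark~\ref{remaCinfty}), but it remains a programme, not a proof.
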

{In favor of this conjecture, the local density  of conservative analytic maps satisfying $(\star)$ \cite{As16}. Also \cref{analytic} of the proof of Theorem \ref{theoA}  shows the local density of analytic dynamics with a normally hyperbolic periodic circles at which the dynamics is a smooth Diophantine rotation.} We explain there that  modulo a solution of  Problem \ref{analytic2}, this implies the existence of a locally dense of analytic (non-conservative) dynamics displaying a fast growth of the number of periodic points.  Note that a positive answer to Conjecture \ref{conjprincipal}  would give one of the very first example of an interesting property   satisfied for a typical analytic map of a compact manifold but not for a polynomial. \medskip

Also Corollary \ref{coroA}  should be useful  to prove the typicality of dynamics with universal behaviors.
Indeed parabolic maps and rotations were used in \cite{Av08,CF06} to show the existence of finitely generated groups containing any prescribed countable subset of $\Diff^\infty(\R/\Z)$. This  leads to:
\begin{conj}\label{conj semigroup}
There is a finite set $\sA$ such that for any countable subset $C\subset \Diff^\infty(\R/\Z)$, there exists $D\subset \Diff^\infty(\R/\Z)^\sA$ such that:
\begin{enumerate} 
\item if $(g^\sa)_{\sa\in \sA}\in~D$,  the semi-group $<g^\sa: \sa\in \sA>$ contains $C$.
\item for every $2\le r\le \infty$, the set $D$ is $C^r$-locally dense: $\mathrm{int\, cl\, } (D) \neq \emptyset$.
\end{enumerate} 
\end{conj} 
This conjecture applied with $C$ equal to a dense subset of $\Diff^\infty(\R/\Z)$ would imply  that a locally topologically generic free semi-group of circle diffeomorphisms is dense in  $\Diff^\infty(\R/\Z)$. This can be seen as a global version of  the main result of Asaoka-Shinohara-Turaev \cite{AST16} on 
universality of locally topologically generic  free semi-group of  interval diffeomorphisms. Universal semi-group are roughly speaking those which  contain  a dense set of germs of diffeomorphisms. 
The concept of universality  helped Turaev \cite{Tu03} to claim  that a global understanding of most of the differentiable dynamics is impossible.   However this claim can be attacked by arguing that the domains of the germs might be  ``too small to be seen''. 
It is not anymore the case with the above global formulation, which can be certainly generalized in higher dimensions. 

Also the techniques and notions introduced  in \cref{sec : Locally dense properties of finitely generated group of circle diffeomorphisms} should lead to  a natural parametric counterpart of \ref{coroA} and so 
%
Conjecture~\ref{conj semigroup}. Such  would be helpful to prove \cite[Conj. II.5]{Be19} stating roughly speaking that among dynamical systems of high-dimensional manifolds,  any property which is local and locally topologically generic is then locally typical in the sense of Arnold. The latter conjecture was motivated by our program on emergence \cite{Be16}.

\textbf{Acknowledgment.}  {I am thankful to Abed Bounemoura,   H\r akan Eliasson, Bassam Fayad, Vadim Kaloshin and Rafaël Krikorian for our interesting conversations. I am grateful to Sylvain Crovisier and Enrique Pujals for our inspiring discussions and Dimitry Turaev for his important comments on the parabolic renormalization for circle diffeomorphisms. {  I thank Masayuki  Asaoka for pointing me a minor mistake in a Lemma, Romain Dujardin for his advice on the introduction and Sébastien Biebler for his careful proofreading of the whole manuscript. 
I thanks the referees for their advices.
The  author  was  partially  supported  by  the  ERC  project  818737  Emergence  of  wild  differentiable  dynamical systems.}}

\section{From parabolic maps to fast growths of the number of periodic points}\label{section:From parabolic maps to fast growth of the number of periodic points}
In this section we show that in order to prove Theorems \ref{theoA} and 
\ref{theoB}, it suffices to construct a normally hyperbolic fibration by circles, at which a dense set of perturbations act  parabolically  on some periodic fibers. This is stated at \cref{Densiteparabolic,ThmS3}. The statements and the proof of these theorems need a few results. First we recall the notion of Poincaré rotation number and Arnold-Herman-Yoccoz' \cref{AHthm} in the next subsection. In subsection \ref{Dynamics conjugated to rotation}, we will recall the notion of parabolic map and state our new \cref{thmS4} on perturbations of families of parabolic maps which are smooth rotations. Then we will recall Hirsch-Pugh-Shub \cref{HPS} and its counterpart for endomorphisms on the persistence of normally hyperbolic fibration. In the last subsections \ref{section:suffi cond  theoA} and \ref{section:suffi cond  theoB}, we will  state and prove \cref{Densiteparabolic,ThmS3}.

\subsection{Smooth rotations and parabolic maps of the circle}\label{Dynamics conjugated to rotation}
Given a homeomorphism $g\in \Diff^0(\R/\Z)$ of the circle $\R/\Z$, one defines its rotation number $\rho_{g}$ as follows.  We fix 
 a lifting $G\in \Diff^0(\R)$ of $g$ for the covering   $\pi\colon \R\to \R/\Z$. 
Then Poincar\'e proved that the limit $\rho_G = \lim_{n\to \infty} G^n(0)/n$ is well defined and its projection  $\rho_g= \pi(\rho_G)$ does not depend on the lifting $G$ of $g$. The \emph{rotation number} of $g$ is $\rho_g$. It is easy to show that the rotation number depends continuously on $g$. The  number $\rho_g\in \R$ is \emph{Diophantine}, if there exist $\tau>0$ and $C>0$  such that: 
\[ |q \rho_g -p |\ge C q^{-\tau}\; , \quad \forall p,q\in \N\setminus \{0\} \; .\]
Let us recall that the set of Diophantine numbers is of full Lebesgue measure. Here is  Yoccoz' improvement of Herman's theorem of the Arnold  Conjecture:
\begin{theorem}[Arnold-Herman-Yoccoz \cite{He79,Yo84}]\label{AHthm}
If the rotation number $\rho$ of  $g\in \Diff^\infty(\R/\Z)$ is  Diophantine, then $g$ is conjugate to the rotation $R_\rho$  of angle $\rho$ via $h\in \Diff^\infty(\R/\Z)$:
\[h\circ g\circ h^{-1} = R_\rho\; .\]
Moreover, if $(g_p)_p$ is a $C^\infty$-family of diffeomorphisms with constant rotation number $\rho$ which is Diophantine,  then $(g_p)_p$ is conjugate to $R_\rho$ via  a $C^\infty$-family $(h_p)_p$ of diffeomorphisms $h_p$:
\[h_p\circ g_p\circ h_p^{-1} = R_\rho\; .\]
\end{theorem}
\begin{proof}
The first part of this theorem is the main result  of \cite{Yo84}. 
Since the conjugacy is uniquely defined up to a composition with a rotation, the second part of this theorem is a local problem. Hence by the first part, it suffices to show that if $(g_p)_p$ satisfies moreover $g_0= R_\rho$, then the family of conjugacy $(h_p)_p$ can be chosen smooth in a neighborhood of $p=0$.  This is a direct consequence of Theorem 3.1.1 of \cite{Bo84}. 
\end{proof}


A key new idea in this work is to exhibit circle diffeomorphisms with 
Diophantine rotation number by creating first \emph{parabolic diffeomorphisms of the circle}: 
\begin{definition} A $C^2$-diffeomorphism $g$ of a circle $\R/\Z$ is \emph{parabolic} if it displays a unique fixed point $p\in \R/\Z$, and this fixed point is non-degenerate parabolic:
\[g(p)=p,\quad D_pg= 1\quad ,\quad D^2_p g\not= 0\quad .\] 
\end{definition}
Using parabolic maps to obtain dynamics smoothly conjugate to Diophantine rotations might sound anti-intuitive since their rotation number are of two very different kind. Nevertheless the interest of parabolic maps of the circle is that they have a simple geometric definition and  are easy to perturb to irrational rotations.   Indeed  if $g$ is a $C^r$-parabolic circle map, for $r\ge 2$, then its rotation number is $0$. By  reversing the circle orientation if necessary, we can assume that $D^2_p g>0$. Then $g$ has a lifting $G$ such that $x\in \R\mapsto G(x)-x$ is non-negative and less than $1$. Thus for every $\epsilon>0$, the lifting $x\in \R\mapsto G(x)-x+\epsilon$ of the composition $R_\epsilon\circ g$ takes its value in $(0,1)$ and so $R_\epsilon\circ g$ has no fixed point, and hence a non-zero rotation number. Then, by continuity of the rotation number and density of Diophantine numbers in $\R$, we can choose $\epsilon>0$ arbitrarily small so that the rotation number $\rho(\epsilon)$ of $R_\epsilon \circ g$ is Diophantine.  This proves:
\begin{proposition}\label{prethmS4}
For every $r\ge 2$, the set $D^r$ of $C^r$-circle maps with Diophantine rotation number accumulates on the set $P^r$ of $C^r$-parabolic maps: $cl(D^r)\supset P^r\; .$
\end{proposition}

The above argument is topological. Hence the following is a non trivial extension of the latter proposition for parameter families:
\begin{theorem}\label{thmS4}
Let $k\in \N$ and let $B'\Subset B\subset \R^k$ be open  subsets. Given any $C^\infty$-family 
 $(g_p)_{p\in B}$  of circle maps so that for every $p\in B$ the map $g_p$ is parabolic, there exists an arbitrarily small  Diophantine number $\alpha>0$,  there exists a small $C^\infty$-perturbation $(\tilde g_p)_{p\in B}$ of $(g_p)_{p\in B}$ so that the rotation number of $\tilde g_p$ is $\alpha$ for every $p\in B'$.
\end{theorem}
The proof this theorem will be done in  \cref{ProofthmS4}
using a parabolic renormalization. 
\subsection{Normally hyperbolic fibrations}\label{sec: Normaly hyperbolic fibrations}
Let $2\le r\le \infty$,  let $f$ be a 
$C^r$-differentiable map of a manifold  $M$. 
The proofs of Theorems \ref{theoA} and \ref{theoB} involve   a continuous family $(L_y)_{y\in \Sigma}$ of disjoint, $C^r$-embedded submanifolds $L_y\subset M$, indexed by a compact  set $\Sigma$.  This defines a fibration $\mathcal L=\bigcup_{y\in \Sigma} L_y\to \Sigma$.    The map $f\in C^r(M,M)$ \emph{leaves invariant} the fibration $\mathcal L$ if for every $y\in \Sigma$, there exists $\sigma(y)\in \Sigma$ such that $f$ sends $L_y$ into $L_{\sigma(y)}$. Hence the following diagram commutes:
\[\begin{array}{ccc}
\cal L& \stackrel{f}{ \rightarrow }& \cal L\\
\downarrow  & & \downarrow\\
\Sigma&  \stackrel{\sigma}{ \rightarrow }& \Sigma\end{array}
\]
Then observe that $Df$ leaves invariant the tangent bundle of the fibers $T\mathcal L:=\bigcup_{y\in \Sigma} T L_y$.  Hence the action $[Df]$ of $Df$ on the normal bundle $\mathcal N=(TM|\mathcal L)/T\mathcal L$ is well defined.  
\begin{definition}
 For $R\ge 1$, an $f$-invariant fibration $\cal L$  is \emph{$R$-normally hyperbolic} if:
\begin{itemize}
\item  $[Df]$ leaves  invariant a splitting $E^s\oplus E^u = \mathcal N$:  $[Df](E^s)\subset E^s\quad \text{and}\quad [Df](E^u)\subset E^u\; ,$
\item There exists $N\ge 1$ such that  for all unit vectors  $v_s\in E^s$, $v_u\in E^u$, and $v\in T \mathcal L$ , it holds:
\[\|[Df^N](v_s)\|<1\quad ,
\quad \|[Df^N](v_u)\|>1 \quad 
 \text{and}  \quad \|[Df^N](v_u)]\|>\|Df^N(v)\|^R>\|[Df^N](v_s)\|\; .\]
\end{itemize}
The dynamics is \emph{$R$-normally expanding} at $\mathcal L$ if $E^s=0$.
\end{definition}
Normally hyperbolic fibrations are important since they are persistent:
\begin{theorem}[\cite{HPS}, \cite{berlam}]\label{HPS}
Let $r\ge 1$ and $1\le R\le r$.  Let $f$ be a $C^r$-map of $M$ which is $R$-normally hyperbolic at the bundle $\mathcal L=\bigcup_{y\in \Sigma} L_y$. Moreover, if $f$ is not a diffeomorphism, we assume that $f$ is $R$-normally expanding at $\mathcal L$.
Then, for any $C^r$-perturbation $\tilde f$ of $f$, there exists a continuous family $(\tilde L_y)_{y\in \Sigma}$ of disjoint $C^r$-submanifolds so that:
\begin{itemize}
\item $\tilde f(\tilde L_y)= \tilde L_{\sigma(y)}$ for every $y\in \Sigma$.
\item $\tilde L_y$ is $C^r$-close to $L_y$ for every $y\in \Sigma$.
\end{itemize}
\end{theorem}

For our purposes, the map $\sigma$ of  $\Sigma$ will be  conjugate  to a full shift $\sigma$  on a finite alphabet $\sA$.  Then for a dense set of $y\in \Sigma$, the fiber $L_y$ is \emph{$q$-periodic}: $f^q(L_y)=L_y$ for some $q\ge 1$. 

\subsection{Conditions implying typicalities of  fast growths of the number of periodic points}
\label{section:suffi cond  theoA}
We recall that a map $f$ of a circle is a $C^r$-rotation if there is $\alpha\in \R/\Z$ such that $f$ is $C^r$-conjugate to the rotation $R_\alpha$ of angle $\alpha$. 
The following will be used to prove Theorem \ref{theoA}:
\begin{proposition}\label{prop1}
Let $2\le r\le \infty$ and $U$  a nonempty subset  of $C^r(M,M)$.
Assume that there is a dense set $D\subset U$ formed by maps $f$ 
displaying a $q$-periodic, normally hyperbolic, $C^r$-circle $\T\subset M$ such  that  $f^q|\T$ is a $C^r$-rotation.
Then for any $(a_n)_n\in \N^\N$, a topologically generic $f\in U$ satisfies:
\[\limsup_{n\to \infty} \frac1{a_n}\Card \Per_n^0(f)\ge 1\; .\]
\end{proposition}
\begin{proof}Let $f\in D$ and $\T$ be as in the statement. Let $q\ge 1$ be the minimal  period of $\T$:   $f^j(\T)$ is disjoint from $\T$ for every $1\le j<q$. 
Thus, there exists a small $C^r$-perturbation $\tilde f$ of $f$, the map $\tilde f^q$ leaves invariant $\T$ and $\tilde f^q|\T$ is a $C^r$-rational rotation. Then there exists a  $q'\in q\N\setminus \{0\}$ minimal so that $\tilde f^{q'}$ leaves invariant  $\T$, and $\tilde f^{q'}|\T=id_{\T}$.
Observe that  $q'$ must be  large when $\tilde f$ is close to $f$.

Also there exist non trivial segments $J\Subset I\subset \T$ such that  $(\tilde f^i(I))_{1\le i\le q}$ is a disjoint family.
Let $\rho \in C^\infty( \T , [0,1] )$ be a function supported by $I$ and satisfying  $\rho | J = 1$.  
We handle a small perturbation $\tilde f'$ of $\tilde f$ 
supported by a small neighborhood of $I$ and in the complement of  $\bigcup_{1\le k< q}\tilde f^k(I)$  such that $\tilde f'^q|J= x\in J \mapsto x+\epsilon \rho(x) \sin(2\pi \cdot a_{q} \cdot x/|J|)$ for an identification of $\T$ with $\R/\Z$ such that the left endpoint of $J$ is $0$ and $\epsilon>0$ is small.

Note that $\tilde f'$ displays at least  $ a_{q}$ hyperbolic periodic points of period $q$. By normal hyperbolicity, these periodic points are hyperbolic and so persist for small perturbations of $\tilde f'$.

This proves for every $N\ge 1$, the existence of an open and dense set $U_{N}\subset U$ formed by maps $f\in U_N$ displaying at least $ a_{q}$ hyperbolic periodic points of period $q$ for a certain $q \ge  N$.  The topologically generic set is $\mathcal R:=\bigcap_{N\ge 0} U_N$. As hyperbolic $q$-periodic points are isolated in the set of $q$ periodic points, 
  for every $f\in \mathcal  R$, there exists $q$ arbitrarily large so that $\Card\, \Per^0_q\, f \ge a_q$. 
\end{proof}
Although by KAM theory, it sounds very intuitive that for many normally hyperbolic circle bundles, the hypothesis of the above proposition holds true. Nevertheless this intuition is not satisfied on a topologically generic set. 
Indeed an open and dense set of $C^r$-diffeomorphisms  of the circle are Morse-Smale. As the set of periodic fibers  in a normally hyperbolic  bundle is countable, a subset $D$ of dynamics satisfying the hypotheses of \cref{prop1} must be topologically meager in $C^r(M,M)$! 

To show that the existence of a set $D$ satisfying the hypotheses of \cref{prop1} we will introduce a technique which uses a Bonatti-D\'iaz blender and a robust heterodimensional cycle. Actually, we are going to prove an equivalent  statement  to these hypotheses, 
but more convenient to be used in for the parameter counterpart Theorem  \ref{theoB} of Theorem  \ref{theoA}.
Here is the statement: 
\begin{theorem}\label{Densiteparabolic}
Let $2\le r\le \infty$ and $D\subset C^r(M,M)$ the subset of maps $f$ 
displaying a $q$-periodic, normally hyperbolic,  $C^r$-circle $\T\subset M$ such  that  $f^q|\T$ is parabolic.
Then for any $(a_n)_n\in \N^\N$, a topologically generic $f$ in the interior of $cl(D)$ satisfies:
\[\limsup_{n\to \infty} \frac1{a_n}\Card \Per_n^0(f)\ge 1\; .\]
\end{theorem}
\begin{proof} 
Let us show that we can assume that  $f^{q_i} | \T $ is of class $C^\infty$. First note that  the submanifold $\T $ is $r$-normally hyperbolic.   Thus by  \cite[\textsection Forced smoothness]{HPS}, it is of class $C^r$.  Therefore up to a $C^r$-coordinate change,  we can assume that $\T$ is of class $C^\infty$.  Let $C \in \T $ be the  parabolic periodic point and let us identify $\T  $ with $\R/\Z$. Note that  $x\in \T \mapsto f^{q}(x)-x$ is a unimodal map nearby $C $ and $0$ is its unique critical value.  We now perform a smoothing of $ f $ so that  $\T $ remains $q $-periodic.  Note that  the induced perturbation of  $x\in \T \mapsto f^{q } (x)-x$ is still  unimodal nearby $C$ with a small critical value. Thus by composing with a small local translation along this fiber, we can restor   the critical value to $0$. We obtained a $C^r$-perturbation of $ f $ whose $q_i$ iterate leaves invariant $  \T  $, and its restriction at it is both parabolic and of class $C^\infty$.



Then Proposition \ref{prethmS4} implies the existence of a $C^r$-perturbation at which  the dynamics on the normally hyperbolic, periodic, $C^\infty$-circle displays a Diophantine rotation number. 
Therefore, by Yoccoz' Theorem \ref{AHthm}, 
the dynamics on the normally hyperbolic, periodic, $C^\infty$-circle
is ($C^\infty$-conjugated to) an irrational rotation. Then   Proposition \ref{prop1} implies the sought result.
\end{proof}
 
Theorem \ref{theoA} will be proved by showing that the assumptions of \cref{Densiteparabolic} are satisfied. In order to do so we will introduce a new object, the $\lambda$-blender. This object  will be coupled to a North-South dynamics of the circle. Then a new technique involving a dynamical rescaling will reveal the density of the parabolic maps. These concepts and techniques will be first introduced in the context of semi-group of circle maps in \cref{sec : Locally dense properties of finitely generated group of circle diffeomorphisms}, then the $\lambda$-blender IFS will be embedded in a normally hyperbolic fibration in \cref{Embedding a semi-group into a normally hyperbolic fibration,def lambda blender}    and used in \cref{sec:Theorem  theoA} to prove Theorem~\ref{theoA}.

\label{section:suffi cond  theoB}\medskip

In parallel to the proof  Theorem \ref{theoA}, we will introduce the parametric counterpart of each studied object. The parametric counterpart of the $\lambda$-blender will be the $\lambda$-$C^r$-parablender. It will be introduced in \cref{sec : Locally dense properties of finitely generated group of circle diffeomorphisms} for semi-groups of families of maps, and then generalized for skew products over a shift in \cref{intrinsic def parablender}. This will be used in  \cref{sec:Theorem  theoB} to prove 
 Theorem \ref{theoB} by proving that the assumptions of the next theorem are satisfied.

 Let $k\ge 1$ and recall that $B_k$ denotes the closed unit ball of $\R^k$.  Let $2\le r<\infty$ and let $\widehat {\End^r_k}(M)$  be the set of $C^r$-families $(f_p)_{p\in B_k}$ of self-maps $f_p$ of $M$. 

 \begin{theorem}\label{ThmS3} 
Let $\hat D $ be a subset of  families 
 $(f_p)_{p\in B_k}\in  \widehat {\End^r_k}(M)$ 
displaying a persistent  $r$-normally hyperbolic fibration  $(\cal L_p)_{p\in B_k}$  by circles  with the following property: 
\begin{itemize}
\item[$(\hat {\cal P})$] The set   $B_k$ is covered by open subsets $U_i$ associated to a  $q_i$-periodic fiber  $\T_{y_i, p}$ of $\cal L$ at which 
 the restriction  $f^{q_i}_p|\T_{y_i, p}$ is parabolic for every $p\in U_i$.   
\end{itemize}
Then  for any $(a_n)_n\in \N^\N$, a $C^r$-topologically generic $(f_p)_{p\in B_k}$ in the interior of  $cl(\hat D)$ satisfies:
\[\limsup_{n\to \infty} \frac1{a_n}\Card \Per_n^0(f_p)\ge 1\quad \forall p\in B_k \; .\]
    \end{theorem}
  \begin{proof}[Proof of \cref{ThmS3}] Let $(f_p)_p\in \hat D$. By definition of persistence (see \cref{HPS}), for every $i$, the circle $\T_{y_i, p} $ depends continuously on $p\in B_k$.    By compactness of $B_k$,  we can assume that the covering $(U_i)_{i\in I}$ is finite. Up to merge the elements $U_i$ of the covering associated to a same torus,  
   we can assume the orbits of  $\T_{y_i, p}$ and $\T_{y_j, p}$ are disjoint for $i\not= j\in I $ and $p\in U_i\cap U_j$.  
 By continuity, the distance between  $\T_{y_i, p}$ and $\T_{y_j, p}$ is positive for $i\not= j\in I $ and $p\in B_k$, and so it is uniformly bounded from below. 

Let us show that we can assume that  $(f^{q_i}_p| \T_{y_i, p})_p$ is of class $C^\infty$. First note that  the submanifold $\bigcup_{p\in U_i} \{p\}\times \T_{y_i, p}$ is $r$-normally hyperbolic for $(p,z)\mapsto (p, f^{q_i}_p(z))$.   Thus by  \cite[\textsection Forced smoothness]{HPS},
the family $(\T_{y_i, p})_{p\in U_i}$ is of class $C^{r}$. Therefore up to 
conjugate $(f_p)_p$ with a $C^r$-family of conjugacy, we can assume that $(\T_{y_i, p})_{p\in U_i}$ is of class $C^\infty$.  Let $C_i(p)\in \T_{y_i, p}$ be the  parabolic periodic point and let us identify $\T_{y_i, p} $ with $\R/\Z$. Note that  $x\in \T_{y_i, p}\mapsto f^{q_i}_p(x)-x$ is a unimodal map nearby $C_i(p)$ and $0$ is its unique critical value.  We now perform a smoothing of $(f_p)_p$ so that  $\T_{y_i, p} $ remains $q_i$-periodic.  Note that  the induced perturbation of  $x\in \T_{y_i, p}\mapsto f^{q_i}_p(x)-x$ is still  unimodal nearby $C_i(p)$, but its critical value is in general   different to $0$. However the critical values of a $C^r$-family of unimodal maps depend $C^r$ on the parameter   and so the critical value of  the smoothed map is a $C^r$-small function of $p$. Thus by composing with a local  translation along this fiber, we can restor the critical value to $0$. Then we obtain a $C^r$-perturbation of $(f_p)_p$ whose $q_i$ iterate leaves invariant $ (\T_{y_i, p})_{p\in U_i}$, and its restriction at it is both   parabolic and of class $C^\infty$.
  Thus \cref{thmS4} implies:
\begin{fact}\label{Fact40} There exists a $C^r$-perturbation $(\breve f_p)_{p\in B_k}$ of $(  f_p)_{p\in B_k}$ such that for every $i$,  for every  $p\in U_i$, 
the map $\breve  f^{q_i}_p$ is normally hyperbolic at
 $\T_{y_i, p}$, the family of restrictions $(\breve  f^{q_i}_p|\T_{y_i, p})_{p\in U_i}$ is of class $C^\infty$   and the rotation number of $\breve f^{q_i}_p|\T_{y_i, p}$ is a Diophantine number $\alpha_i$ independent of $p\in U_i$. 
\end{fact}
Then by the second part of Herman-Yoccoz' Theorem \ref{AHthm}, the family  $\breve f^{q_i}_p|\T_{y_i, p}$ is $C^\infty$-conjugated to the rotation $R_{\alpha_i}$ for every $p\in U_i$, and the conjugacy $h_{i\, a}$ depends infinitely smoothly on $p$. Hence by the same reasoning as for the proof of Proposition \ref{prop1}, 
there exists an integer $r_i\ge 1$  arbitrarily large and  a $C^r$-perturbation $(\mathring f_p)_p$ of $(f_p)_p$ 
so that  for every $p\in U_i$,   $\mathring f^{q_i}_p$ leaves invariant $\T_{y_i, p}$ and:
\[\mathring f^{q_i\cdot r_i}_p|\T_{y_i, p} = id_{\T_{y_i, p} }\qand 
\mathring f^{q_i\cdot r}_p|\T_{y_i, p} \neq  id_{\T_{y_i, p} }, \; \forall 1\le r<r_i\; .\]
Thus, for every $i$, there exists a non trivial segment $  J_{i\, p}\Subset \T_{y_i, p}$ depending smoothly on $p$ and such that the  family of segments  $(\mathring f_p^k(J_{i\, p})) _{0\le k <q_i\cdot r_i}$ is disjoint   for every $p\in U_i$.


Hence we can perform a last perturbation $(\tilde f_p)_p$ of $(\mathring f_p)_p$ so that for every $p\in U_i$, it holds:  $\tilde f_p^{r_i\cdot q_i} |J_{i\, p}= x \mapsto x+\epsilon \rho_i (x) \sin(2\pi \cdot a_{r_i\cdot q_i} \cdot x/|J_{i\, p}|)$ for an identification of $\T_{y_i\, p}$ with $\R/\Z$ such that the left endpoint of $J_{i\, p}$ is $0$ and $\epsilon>0$ is small.

Then for every $i$ and $p\in U_i$, the map  $\tilde f_p$ displays at least  $   a_{q_i\cdot r_i} $ saddle points of period $r_i\cdot q_i$.  As these periodic points persist for small perturbations of $\tilde f_p$, this proves the existence of an open and dense set $\hat {\cal M}_{N}\subset \mathrm{int}\, (\mathrm{ cl} \hat D)=: \hat {\cal M}$ such that that for every $(\tilde f_p)_p\in \hat {\cal M}_N$, for every $p\in B_k$, the map $\tilde f_p$ displays at least $a_{r_i\cdot q_i}$ saddle points of period $r_i\cdot q_i\ge N$. 

 Note that the intersection $\hat{\mathcal R}:=\bigcap_{N\ge 0} \hat {\cal M}_N$ is  $C^r$-topologically generic in $\hat {\cal M}$  and  for every $(f_p)_p\in \hat {\mathcal  R}$, for every $p\in B_k$, there exists $n$ arbitrarily large such that $\Card\, \Per^0_n\, f_p \ge a_n$. 
\end{proof}
In the $C^1$-topology, we can perturb a neutral fixed point  to make it locally equal to the identity. Thus we can perform the same trick by adding a small oscillation of high frequency to obtain:
\begin{corollary}\label{ThmS3coro} 
When $r=1$, the conclusion of  \cref{ThmS3} holds true if we replace $(\hat {\cal P})$ by:
\begin{itemize}
\item[$(\hat {\cal P}')$] The set   $B_k$ is covered by open subsets $U_i$ associated to a  $q_i$-periodic fiber  $\T_{y_i, p}$ of $\cal L$ at which 
 the restriction  $f^{q_i}_p|\T_{y_i, p}$ displays a parabolic fixed point varying continuously  for every $p\in U_i$.   
\end{itemize}
%
\end{corollary}
\section{Locally dense properties of finitely generated groups of circle diffeomorphisms}\label{sec : Locally dense properties of finitely generated group of circle diffeomorphisms}
The aim of this section is to develop and introduce notions and techniques  which will be generalized to  construct  an open set of self-maps and families of self-maps satisfying the assumptions of 
\cref{Densiteparabolic,ThmS3}. In particular,  we will introduce two new objects, the $\lambda$-blender and the $C^r$-$\lambda$-parablender, which are  development of the Bonatti-Diaz blender \cite{BD96} and the $C^r$-parablender introduced in \cite{BE15} (see also \cite{Be16}). To make these notions more apprehensive, we push forward some of the ingredients of \cite{BCP16} to develop the notions of  blender and parablenders for IFS in \cref{Blender IFS,Parablender IFS}, and introduce their $\lambda$-version in \cref{sec: lambda-blender IFS,sec: lambda-parablender IFS}. We will see how to use them to find locally dense sets of semi-groups of circle diffeomorphisms  with rotations in \cref{Density of rotations from blender IFS} and parabolic maps in \cref{Density of rotations from lambda-blender IFS}.

\subsection{Blender and parablender IFS}\label{Blender IFS}
Let $M$ be a manifold and let $\End^r(M)$ be the space of self-maps of $M$ of class $C^r$ for every $\infty\ge r\ge 0$ {or $r=\omega$}.  Given a semi-group $G\subset \End^0(M)$,  a compact subset $\Lambda$ is $G$-\emph{invariant} if 
$\Lambda= \bigcup_{g\in G} g(\Lambda)$. It is \emph{transitive} if it displays a dense $G$-orbit.  The set $\Lambda$  is a  \emph{contracting attractor} if 
$G\subset \End^1(M)$ and   $g|\Lambda$ is  contracting for every $g\in G$. 
 
  If there exists a finite set $\sB$ and  $(g^\sb)_{\sb\in \sB}\in \End^1(M)^\sB$ which generates $G$, then $\Lambda$ is a \emph{contracting attractor for the iterated function system} (IFS) $(g^\sb)_{\sb\in \sB}$. Given $\sm\in \sB^{(\N)}$ we denote $|\sm|\in \N$ its number of letters in $\sB$.  The following is the IFS counterpart of the Bonatti-Diaz blender \cite{BD96}.
\begin{definition}[$C^1$-Blender IFS] 
A  contracting attractor $\Lambda\subset M$ for an IFS   $(g^{\sb})_{  \sb \in \sB}\in \End^1(M)^\sB$ is a   \emph{blender} if its  interior is nonempty and  for any subset $K\Subset \Lambda$,   every $C^1$-perturbation of $(\tilde g^\sb)_{\sb\in \sB}$ has a contracting attractor which contains $K$. 
\end{definition}

\begin{example}\label{blender IFS}
Let $\sB_0:=\{-, +\}$. The set $\Lambda= [-1,1]$ is a  blender for the IFS $(g^\sb)_\sb$ with   $g^{-}: x\in \R
\mapsto  \frac23 (x+1) -1\in \R
$ and $g^{+}: x\in \R
\mapsto \frac23 (x-1)+1\in \R
$. 
\end{example}
 \begin{proof}
For any $\eta>0$,  the convex subset   $K:=[-1+\eta, 1-\eta]$ is sent by   $g^{-}$ and $g^{+}$ onto respectively $[-1+\frac23\eta, \frac13-\frac23\eta]$ and $[-\frac13+\frac23\eta , 1-\frac23\eta]$. Thus the following \emph{covering property} holds true: 
\[ K  \subset  \mathrm{int\, }  g^+ (K)  \cup\mathrm{int\, }  g^-(K) \; .\] 
This property is stable for any $C^1$-perturbation $(\tilde g^+, \tilde g^-)$ of $(g^+, g^-)$.  Hence, for any perturbation of the dynamics, for any $z_0\in K$, there exists a \emph{preorbit} $(z_i)_{i\le 0}$ such that that $z_i$ belongs to $K$. By preorbit we mean that $z_{i+1}= g^{\sa_i}(z_i)$ for a certain $\sa_i \in \sB_0$. Thus  
the continuation $\tilde \Lambda$ of $\Lambda$  contains $K$. 
The transitivity is left as an exercise to the reader. 
\end{proof}

\label{Parablender IFS}
For $1\le r<\infty$, we now study semi-groups of parameter $C^r$-families $(g_p)_{p\in B_k}$ of self-maps  $g_p\in \End^r(M)$ of a manifold $M$, parametrized by the closed unit $k$-ball $B_k$. 
We recall that $\widehat {\End_k^r}(M)$ denotes the space of such families (which is itself a semi-group for the composition rule). 
As we will work with $C^r$-perturbation,  we shall deal with the action of such families  on \emph{$C^r$-jets}. We recall that the 
\emph{$C^r$-jet at $p_0\in B_k$} of a   $C^r$-family of points $z = (z_p)_{p\in B_k }$ is:
\label{notationJdM}
\[J^r_{p_0}   z= \sum_{j=0}^r \frac{\partial^j_p z_{p_0}}{j!}  (p-p_0)^{\otimes j}\; .\]
  Let $J^r_{p_0} M$ be the space of $C^r$-jets at $p_0$ of  $C^r$-family $z = (z_p)_{p\in B_k }$ of points  $z_p\in M$.  
We notice that any $C^{r}$-family $  g = (g_p)_p\in  \widehat {\End_k^r}(M)$  acts canonically on $J^r_{p_0} M$ as:
\[J^r_{p_0}  g \colon  J^r_{p_0}( z_p)_p \in J^r_{p_0}M\mapsto 
J^r_{p_0} (g_p(z_p))_p\in J^r_{p_0} M\; .\]

Let $\sB$ be a finite set and for every $\sb\in \sB$, let  $g^\sb =(g^\sb_p)_{p\in B_k}\in \widehat {\End_k^r}(M)$. 
 Let $p_0\in B_k$.  We recall that if $\Lambda_{p_0}$ is a contracting attractor for  the IFS  $(g_{p_0}^\sb)_{\sb\in \sB}$. 
Then for every  point $\Omega$ nearby $\Lambda_{p_0}$, the set $\Lambda_{p_0}$ consists of  the points $X_{p_0}(\u \sb) = \lim_{i \to -\infty}  g_{p_0}^{\sb_{-i} \cdots \sb_{-1}}(\Omega)$ among  $\u \sb =(\sb_i)_{i<0}\in \sB^{\Z^-}$. 
This limit does not depend on $\Omega$. Also for every $p$ nearby $p_0$, the set of points $X_{p}(\u \sb) = \lim_{\infty}  g_{p}^{\sb_{-i} \cdots \sb_{-1}}(\Omega)$ among  $\u \sb  \in \sB^{\Z^-}$ is a contracting attractor for $ (g^\sb_p)_{\sb \in \sB}$.  The 
family $(\Lambda_p)_p$ is call the \emph{continuation} of $\Lambda_{p_0}$.  Furthermore  the  family $(X_{p}(\u \sb))_p$  is of class $C^r$ for every $\u \sb  \in \sB^{\Z^-}$ and depends continuously on $\u \sb$. We consider the subset   
\[   J^r_{p_0}\Lambda:=   \{J^r_{p_0} X(\u \sb): {\u \sb \in \sB^{\Z^-}}\}\subset J^r_{p_0} M\; .\]
This set is compact, invariant, and transitive for the IFS  $(J^r_{p_0}  g^\sb)_{\sb \in \sB}$.    Roughly speaking, the continuation $(\Lambda_p)_p$ is a $C^r$-parablender at $p_0$ if $J ^r_{p_0}\Lambda $ is a blender for $(J^r_{p_0}  g^\sb)_{\sb \in \sB}$. However there is one tricky point:  the maps $J^r_{p_0}  g^\sb$ are in general only continuous and not differentiable nor contracting at $J^r_{p_0}\Lambda$. 
However these maps are \emph{topologicaly contracting}:

\begin{proposition} \label{prop 1} There exists a neighborhood    $U$ of $J^r_{p_0}\Lambda$ such that for $(\tilde g^\sb)_\sb$  $C^r$-close to $(g^\sb)_\sb$:
\begin{enumerate}
\item   When the length of $\sm \in \sB^{(\N)}$  is large,  the diameter of $J^r \tilde g ^\sm (U)$ approaches  $0$.
\item The families $(J^r_{p_0} \tilde X_{p}(\u \sb))_{\u \sb \in \sB^{\Z^-}}$  and $(J^r_{p_0}  X_{p}(\u \sb))_{\u \sb \in \sB^{\Z^-}}$ are uniformly close  when $(\tilde g^\sb)_{\sb}$ is  $C^r$-close to $(g^\sb)_\sb$.
\end{enumerate}\end{proposition}
\begin{proof}
First note that $(1)$ implies $(2)$. Let us show $(1)$. 
At a neighborhood of $\Lambda$, the map $ \tilde g^\sm_p$ is a large composition of contractions when $|\sm|$ is large. So $\partial_x \tilde g^\sm_p$ is small and even $C^{r-1}$-small. Also, when $|\sm|\to \infty$, the derivative  $\partial_p^k \tilde g^\sm_p$ is a sum of compositions  of an exponentially large  number of maps which are almost all contractions. So $\partial_p^k \tilde g^\sm_p$ is close to a constant $\ell_k$.
By the Faa-di-Bruno formula, there are polynomials $B_{j,k}$ such that:
\[\partial_p^k \tilde g^\sm_p(x_p)=\sum _{j=0}^k  
\partial_x^k\partial_p^j g^\sm_p
\cdot B_{j,k}\left(\partial_p x_p,\partial_p^2 x_p,\dots ,\partial_p^{k-j} x_p\right)= (\partial_p^k g^\sm_p)(x_p) +O(\|\partial_x \tilde g_p\|_{C^{k-1}}\cdot \|x_p\|_{C^{k}})
\; ,\]
Thus at a neighborhood $U$ of $J^r_{p_0}\Lambda$, when $|\sm|$ is large,
the map  $J^r_{p_0} g^\sm$ is close to the constant function $\sum_{ k=0}^r  \zeta_k \cdot (p-p_0)^k\mapsto  \sum_{k=0}^r \frac{\ell_k}{k!} \cdot (p-p_0)^k$. \end{proof}
From this we obtain the following generalization of the covering property (compare to \cite{BCP16}):
\begin{proposition}\label{jr covering prop} The continuation $(\Lambda_p)_p$ satisfies the \emph{$J^r_{p_0}$-covering property} if the  interior of $J^r_{p_0} \Lambda $ is nonempty and there exists 
an increasing  sequence of subsets $K_n\Subset J^r_{p_0} \Lambda $ whose union is the interior of $J^r_{p_0} \Lambda $ and such that 
\[cl(K_n)\subset \bigcup_\sB\mathrm{int\, }  J^r_{p_0} g^\sb (K_n) \subset J^r_{p_0}  \Lambda\; .\]
Then for any $n\ge 0$, for any $C^r$-small perturbation  $(\tilde g^\sb)_{\sb \in \sB}$ of $(g^\sb)_{\sb \in \sB}$, the set $J^r_{p_0}\tilde \Lambda$ contains $K_n$. \end{proposition}

The following is the IFS counterpart of the $C^r$-parablender introduced in \cite{BE15}.
\begin{definition}
Given a $C^r$-family of IFS $(g^\sb)_{\sb \in \sB}$, the continuation   $(\Lambda_{p})_{p }$ of a blender $\Lambda_{p_0}$  is a parablender at $p_0$ if 
 $J^r _{p_0}\Lambda$ has nonempty interior and for every $K \Subset J^r_{p_0} \Lambda$, the  continuation
$(\tilde \Lambda_{p})_{p }$ of a sufficiently small  $C^r$-perturbation $(\tilde g^\sb)_{\sb\in \sB}$ of $(g^\sb)_{\sb\in \sB}$  satisfies  $J^r_{p_0}\tilde \Lambda\Supset K$. 
\end{definition}
Given   $i=(i_1, \dots, i_k)\in \N^k$  and $p=(p_1, \dots, p_k)\in \R^k$, we work with the multi-index notation $p^i:= p_1 ^{i_1}\cdots p_k^{i_k}$ and $|i|=i_1+\cdots + i_k$. For $r\in \N$, let $E_r:= \{i \in \N^k: |i| \le r\}$.
\begin{example}  \label{expparablendergroupe} 
Let $ \sB_r:=  \{-1,1\}^{E_r}$ and for every $\sb=(\sb_i)_{i\in E_r} \in  \sB_r$,  let:
\[  P_\sb (p) = \sum_{i\in E_r } \sb_i \cdot p^i\qand g^\sb_p:= x\in  \R  \mapsto   \frac 23( x- P_\sb(p)) + P_\sb(p)\; .\]
Then the continuation of  $\Lambda_0:=[-1,1]$ is a $C^r$-parablender  at $p=0$  for    $((g^\sb_p)_{p\in B_k} )_{\sb \in  \sB_r}$  with: 
\[J^r_0\Lambda:=  \left \{ \sum_{i\in E_r } \xi_i \cdot p ^{i}: (\xi_i)_i\in [-1,1]^{E_r}\right\}\]
  \end{example} 
\begin{proof} First note that the set of diffeomorphisms
$\{g^\sb_0: \sb \in  \sB_r\}$ equals the one   \cref{blender IFS} for which $\Lambda_0:=[-1,1]$ is a blender. 
Via the conjugacy $(x_j)_{j\in E_r} \mapsto \sum_{E^r} x_j p^j$,  the set of maps $(J^r_0 g^\sb)_{\sb\in \sB_r}$  is conjugated   to the Cartesian product of $\Card E_r$ times the set of maps of \cref{blender IFS}:
\[\{J^r_0 g^\sb :{\sb\in  \sB_r}\}\approx \{ (x_j)_{j\in E_r}\mapsto (g^{\sb_j}(x_j))_{
j\in E_r}
: (\sb_i)_i \in \sB^{E_r}\}\quad \text{ using }  \sB_r \approx  \sB^{E_r}\; .\]
Thus $J^r_0\Lambda\approx [-1,1]^{E_r}$ is a product of blenders IFS satisfying the covering property and so the continuation $(\Lambda_p)_p$  satisfies the assumptions of \cref{jr covering prop}.
%
 \end{proof}

\subsection{Density of rotations from blender IFS}\label{Density of rotations from blender IFS}
Let us explain how to use a blender to obtain the local density of semi-groups of finitely generated circle diffeormorphisms containing a rotation.
 We recall that  the projectivized $P^1(\R)$ of $\R^2$ is a smooth circle which is canonically  identified to the one-point compactification $\R\cup \{\infty\}$ of $\R$ via the inclusion $x\in \R\hookrightarrow [x; 1] \in P^1(\R) $. This enables to extend analytically the dynamics of any non-zero real polynomial or even rational map of $\R$ to this circle. These are the \emph{projectivized} corresponding maps. 
 
\begin{proposition}\label{proprot}
There exists a neighborhood $N$ in  $\Diff^1(P^1(\R))$ of a triplet of homographies   $(g^\sa, g^\sb, g^\sc) $, such that  
any $(\tilde g^\sa, \tilde g^\sb, \tilde g^\sc)\in N\cap \Diff^\infty(P^1(\R)) $ can be perturbed by conjugacies with homographies to span a  semi-group  containing a    smooth,   irrational rotation.\end{proposition}
\begin{proof}
Let $(g^\sa,g^\sb)_{\sa, \sb\in \sB_0}$ be the projectivized action of the affine maps of    \Cref{blender IFS} and let $V$ be a $C^1$-neighborhood of this pair of diffeomorphisms such that every $(\tilde g^\sa, \tilde g^\sb)\in V$ has a blender  $\tilde \Lambda $ containing $[-2/3,2/3]$.  Let $g^\sc: x\in P^1(\R)=\frac32 \frac x{x+1}$ and  
let $V'$ be a small neighborhood of $g^\sc$ formed by maps $\tilde g^\sc$  which  displays exactly two periodic points: an attracting  $x_a\approx 1/2$  and a repulsive     $x_r\approx 0$. 

Let  $(\tilde g^\sb)_{\sb\in \sB_0}  \in V$ and $\tilde g^\sc\in V'$ be of class $C^\infty$.   Using the transitivity of the blender there exists $\sm\in \sB_0^{(\N)}$ such that $\tilde g^\sm$ sends a point  close to $x_a$  to a point close to $x_r$. Hence up to a small perturbation of $\tilde g^c$ (using a composition with a homography), we can assume that $\tilde g^\sm(x_a)=x_r$.
  Now we consider a small unfolding $(\tilde g^\sc_\epsilon)_{\epsilon\in [-\epsilon_0, \epsilon_0]}$ of $\tilde g^\sc=:\tilde g^\sc_0$, such that with  $x_a(\epsilon)$ and $x_r(\epsilon)$ the continuations of the fixed points  $x_a=:x_a(0)$ and $x_r=:x_r(0)$, it holds:
  \[x_a(-\epsilon_0)<x_a<x_a(\epsilon_0)\qand x_r(\epsilon_0)<x_r<x_r(-\epsilon_0).
\]
This can be done by conjugating the map $\tilde g^\sc$ by  $(x\mapsto (1+\epsilon)(x-1/4)+1/4)_\epsilon$. Thus $x_a$ is sent to $x_r$ by $\tilde g^\sm$. Then for $k$ large,  the  points   $((\tilde g_{\epsilon_0}^\sc)^j(x_r) )_{k\ge j\ge 0} $ go clockwise to land nearby $x_a(\epsilon_0)>x_a$:
\[\tilde g^\sm(x_a)=x_r< \tilde g_{\epsilon_0}^\sc(x_r)< \cdots < (\tilde g_{\epsilon_0}^\sc)^j(x_r) < \cdots  <  (\tilde g_{\epsilon_0}^\sc)^k(x_r) > x_a\]
 On the other hand,  the  points   $((\tilde g_{-\epsilon_0}^\sc)^j(x_r) )_{k\ge j\ge 0} $ go anti-clockwise to land nearby $x_a(-\epsilon_0)<x_a$:
 \[\tilde g^\sm(x_a)=x_r> \tilde g_{-\epsilon_0}^\sc(x_r)> \cdots > (\tilde g_{-\epsilon_0}^\sc)^j(x_r)>  \cdots >  (\tilde g_{-\epsilon_0}^\sc)^k(x_r) < x_a\; .\]
  Thus when $\epsilon $ varies between $-\epsilon_0$  and $\epsilon_0$, the point $(\tilde g^\sc_\epsilon)^k\circ \tilde g^\sm(x_a)$  makes at least one whole turn around $P^1(\R)$.  This implies that the rotation number of $(\tilde g_\epsilon^\sc)^k\circ \tilde g^\sm$ is not constant when $\epsilon $ varies in $[-\epsilon_0,\epsilon_0]$. 
Thus  by continuity of the rotation number and density of the Diophantine ones, there exists $\epsilon\in (-\epsilon_0 , \epsilon_0 )$ such that the rotation number of $ \tilde  g_{\epsilon}^{\sc^n}\circ \tilde g^\sm$ is Diophantine. By Arnold-Herman-Yoccoz \cref{AHthm}, the map $\tilde  g_{\epsilon}^{\sc^n}\circ \tilde g^\sm$ is smoothly conjugate to the rotation  $R_\alpha$. 
\end{proof}
This proposition implies the first part of Corollary \ref{coroA}. However, the technique is not easy to generalize to obtain a parametric version of this result. 
We know that the set $D$ of  Diophantine smooth rotations is an union of    one-codimensional submanifolds and these appear locally  $C^0$-densely from the above argument.  However this does not provide any 
control on the  tangent spaces of these manifolds, while it is crucial for Theorem \ref{theoB}. To obtain such a control,  we will focus on parabolic maps and introduce  the   $\lambda$-blender to show their density.
\subsection{$\lambda$-blender and $\lambda$-parablender
IFS}\label{sec: lambda-blender IFS}
Let $M$ be $\R$ or $P^1(\R)\approx \R/\Z$; its tangent bundle is  identified with $M\times \R$.  

\begin{definition} 
The  \emph{Lyapunov fibration} of a contracting attractor $\Lambda$ for an IFS $(g^\sb)_{\sb\in \sB}$ of $M$ is:
\[\lambda(\Lambda):= 
\bigcap_{n\ge 0} cl \left\{ \Big(  g^{\sm}(x), \frac1{|\sm|}\log |D_x   g^\sm|\Big) : x\in \Lambda, \; \sm\in \sB^{(\N)} \text{ and } |\sm|\ge n\right\} \subset \Lambda \times \R\; .
\]\end{definition} 
 To show the local density of finitely generated group of circle diffeomorphisms with a parabolic element, we introduce the following new notion:
\begin{definition}[$\lambda$-Blender  IFS] 
A blender $\Lambda$ for an IFS   $(g^{\sb})_{  \sb \in \sB}\in \End^1( M )^\sB$ is  a   $\lambda$-\emph{blender}  if its Lyapunov fibration $\lambda(\Lambda)$ has nonempty interior  and for any subset $ K \Subset\lambda(\Lambda) $,   for every $C^1$-small perturbation of $(\tilde g^\sb)_{\sb\in \sB}$,  the Lyapunov fibration of the continuation of $\Lambda$ contains  $ K $.
%
\end{definition}

\begin{example} \label{exem lambda blender IFS}
Let $\sB^\lambda_0:= \{-1,+1\}^2 $.  Then for  $\epsilon>0$ small,  $[- 1,1]$ is a $\lambda$-blender for the  IFS $(g^\sb)_{\sb \in \sB^\lambda_0}$  where  $g^{\sb} : x\in \R  \mapsto  
 \frac23\cdot \exp (\epsilon\cdot \delta')\cdot (x+\delta)-\delta  $ for $  \sb=( \delta, \delta' )\in \sB^\lambda_0$ and  with Lyapunov fibration:
\[\lambda(\Lambda):= [-1,1]\times \left[\log \frac 23-\epsilon ,\log \frac 23+\epsilon \right]\subset \R \times \R\, .\]
 \end{example} 
  \begin{proof}  Let $\eta>0$ be small and let $K:=[-1+\eta, 1-\eta]\times \left[ \log \frac 23-\epsilon+\eta ,\log \frac 23+\epsilon -\eta\right]$.  
  
  Let $(\tilde g^\sb)_{\sb\in \sB^\lambda_0}$  be a  $C^1$-perturbation of $( g^\sb)_{\sb\in \sB^\lambda_0}$. It suffices to show for every $(x_0, \ell )\in  K $ and $n\ge 0$, the existence of a sequence of letters $(\sb_i)_{-n\le i< 0}\in (\sB^\lambda_0)^{n}$ and a sequence of points $(x_i)_{-n\le i<0}\in [-1+\eta ,1-\eta]^{n}$ such that:
  \begin{enumerate} 
  \item $x_0=x$ and  $\tilde g^{\sb_i}(x_i)= x_{i+1}$ for every $-n\le i<0$,
  \item $-2\epsilon\le \log  |D_{x_{-n}}g^{\sb_{-n} \cdots \sb_{-1}}| -n\cdot \ell\le  2\epsilon$ .
  \end{enumerate}
The step $n=0$ is trivial. Assume the result shown for $n\ge 0$. 
Then, 
with $\delta$ the sign of $x_{-n}$ and $\delta'$ the sign of $-(\log  |D_{x_{-n}}\tilde g^{\sb_{-n}\cdots \sb_{-1}}| -n\ell)$, with $\sb= (\delta,\delta')\in \{-1,1\}^2=\sB^\lambda_0$ and with $x_{-n-1}$ the preimage by $\tilde g^{\sb_{-n-1}}$ of $x_{-n}$, the induction hypothesis holds true by the covering property showed  in \cref{blender IFS}. Indeed, if $\delta'=+1$  then 
$\log |D_{x_{-n}}\tilde g^{\sb_{-n} \cdots \sb_{-1}}| -n \ell \le  0$ and 
 $ \log |D_{x_{-n-1}}\tilde g^{\sb_{-n-1}}| - \ell $ is approximately in $\log\frac23 +\epsilon- [+\log\frac23 -\epsilon+\eta , +\log\frac23 +\epsilon -\eta]= [ \eta ,  2\epsilon -\eta] $. Thus:
\[0\ge  \log |D_{x_{-n}}\tilde g^{\sb_{-n} \cdots \sb_0}| -n \ell >-2\epsilon\qand 
2\epsilon> \log |D_{x_{-n-1}}\tilde g^{\sb_{-n-1}}| - \ell > 0\]
Summing these two bounds, we obtain $(2)$.  The case $\delta'=-1$ is done similarly.
  \end{proof}
  
\label{sec: lambda-parablender IFS}   Let us now give the parametric counterpart of the $\lambda$-blender.  Let  $M$ be a curve; its tangent bundle is  identified with $M\times \R$.
  Let $\sB$ be a finite set and for every $\sb\in \sB$, let  $g^\sb =(g^\sb_p)_{p\in B_k}\in \widehat {\End_k^r}(M)$. 
 Let $p_0\in B_k$.  We recall that if $\Lambda_{p_0}$ is a contracting attractor for  the IFS  $(g_{p_0}^\sb)_{\sb\in \sB}$, then for every  point $\Omega$ nearby $\Lambda_{p_0}$ and $p$ nearby $p_0$, the following set:
 \[  \Lambda_p:= \{\lim_{\infty}  g_{p}^{\sb_{-i} \cdots \sb_{-1}}(\Omega): \u \sb  \in \sB^{\Z^-}\}= \bigcap_{n\ge 0}  cl( \{g_{p}^{\sm}(\Omega): \sm\in \sB^{(\N)} \text{ and } |\sm|\ge n \})\;  \]
is the continuation of $\Lambda_{p_0}$.  It does not depend on $\Omega$. 
 Its Lyapunov fiber is:
 \[ \lambda( \Lambda_p)=  \bigcap_{n\ge 0}   cl \left\{\Big(g_{p}^{\sm}(\Omega), \frac1{|\sm|} \log |D_\Omega g_{p}^{\sm}|\Big): 
 \sm\in \sB^{(\N)} \text{ and } |\sm|\ge n\right\}\; .  \]

While studying the $C^r$-parablender we considered:
 \[   J^r_{p_0}\Lambda:= \{J^r_{p_0} \lim_{\infty}  g_{p}^{\sb_{-i} \cdots \sb_{-1}}(\Omega): \u \sb  \in \sB^{\Z^-}\}= \bigcap_{n\ge 0}   cl \left\{J^r_{p_0}  g_{p}^{\sm}(\Omega): 
 \sm\in \sB^{(\N)} \text{ and } |\sm|\ge n\right\}\;  \]
 It is thus natural to consider the following para-counterpart the Lyapunov fiber:
  \[   \lambda(J^r_{p_0}\Lambda):=  \bigcap_{n\ge 0}  cl   \left\{\Big(J^r_{p_0}g_{p}^{\sm}(\Omega), J^{r-1}_{p_0} \frac1{|\sm|} \log |D_\Omega g_{p}^{\sm}|\Big): 
 \sm\in \sB^{(\N)} \text{ and } |\sm|\ge n\right\}\; .\]
We notice that $\lambda(J^r_{p_0}\Lambda)$ is a compact subset of $J^r_{p_0} M\times J^{r-1}_{p_0} \R$. 

The following is a natural generalization of both the $C^r$-parablender and the $\lambda$-blender. This new notion can be used to prove the local density of finitely generated groups of families of circle diffeomorphisms with a parabolic element at every parameter:
\begin{definition}[$C^r$-$\lambda$-parablender  IFS] The family  $(\Lambda_p)_p$ of contracting attractors for $(g^{\sb})_{  \sb \in \sB} $ is  a $C^r$-$\lambda$-\emph{parablender}  at $p_0$  if $ \lambda(J^r_{p_0}\Lambda)$   has nonempty interior  and for any   subset $ K \Subset \lambda(J^r_{p_0}\Lambda)$, 
the  continuation $(\tilde \Lambda_{p})_{p }$ of every  $C^r$-perturbation $(\tilde g^\sb)_{\sb\in \sB}$ of $(g^\sb)_{\sb\in \sB}$  satisfies  $\lambda(J^r_{p_0}\tilde \Lambda)\Supset K $. 
\end{definition}
\begin{remark}
We notice that  $\Lambda_{p_0}$ is a $\lambda$-blender for $(g^\sb_{p_0})_\sb$ and 
$(\Lambda_p)_p$ is a $C^r$-parablender at $p=p_0$ for  $(g^\sb)_\sb$. 
\end{remark}
\begin{example} \label{lambda parablender IFS}
Let  $  \sB_r$ and $(P_\sb)_{\sb \in \sB_r}$  be defined as in \cref{expparablendergroupe} and put $\sB_r^\lambda := \sB_r\times \sB_{r-1}$.   Then for $\epsilon>0$ small, the continuation of  $\Lambda:=[- 1,1]$ is a $C^r$-$\lambda$-parablender at $p=0$ for  $(g^\sb)_{\sb \in \sB_r^\lambda}$  where $g^\sb=(g^\sb _p)_{p\in B_k}$ is defined by:  
\[g^{\sb}_p : x\in P^1(\R) \mapsto    \frac23\cdot \exp(\epsilon\cdot P_{\sb'} (p))\cdot (x-P_\sb(p))+P_\sb(p)\text{ for }   (\sb, \sb')\in  \sB_r\times  \sB_{r-1}=\sB_r^\lambda.\]
Moreover it holds:
\[\lambda(  J^r _0  \Lambda) = 
 \left \{ \Big(\sum_{i\in E_r } \xi_i \cdot p ^i,\log \frac23 +\epsilon \sum_{i\in E_{r-1} } \lambda_i \cdot p^i \Big)
 : (\xi_i)_i\in [-1,1]^{E_r}, (\lambda_i)_i\in [-1,1]^{E_{r-1}}\right\}\]
 \end{example} 
  \begin{proof}Let $\eta>0$ be small and put $I:= [-1+\eta,1-\eta]\Subset [-1,1]$ and:
\[  K:=
 \left \{ \Big(\sum_{i\in E_r } \xi_i \cdot p ^i,\log \frac23 +\epsilon \sum_{i\in E_{r-1} } \lambda_i \cdot p^i \Big)
 : (\xi_i)_i\in I^{E_r}, (\lambda_i)_i\in I^{E_{r-1}}\right\}\; .
\]
 
  Let $(\tilde g^\sb)_{\sb\in \sB^\lambda_0}$  be a  $C^r$-perturbation of $( g^\sb)_{\sb\in \sB^\lambda_0}$. It suffices to show that for every $ (x_0,\ell)  \in  K $ and $n\ge 0$, the existence of a sequence of letters $(\sb_j)_{-n\le j< 0}\in (\sB^\lambda_0)^{n }$ and a sequence of points $(  x_j, \ell_j)_{-n\le j< 0}\in  K^{n} $ such that:
  \begin{enumerate} 
  \item    $J^r_{p_0} \tilde g^{\sb_j}( x_j)=  x_{j+1}$ for every $-n\le j<0$,
 \item 
$\ell _n =J^{r-1}_{p_0}( \log  |D_{x_{-n}(p)}\tilde g^{\sb_{-n} \cdots \sb_{-1}}_p| )_p\in \left \{  n \ell + \sum_{i \in E_{r-1}  } \lambda_i \cdot p ^i 
 :  \lambda_i  \in [-2\epsilon, 2\epsilon]  \right \}$.
  \end{enumerate}
The step $n=0$ is obvious. Assume the result shown for $n\ge 0$. 
Let $\sb_{-n-1}\in  \sB^\lambda_r $ be the upplet whose coefficients are the sign of those of the polynomial $(x_{-n}, -\log  |D_{x_{-n}}g^{\sb_{-n}\cdots \sb_{-1}}| +n\cdot \ell )$. Then for the same reasons as for  
\cref{expparablendergroupe,exem lambda blender IFS}
 the induction hypothesis holds true at step $n+1$. 
  \end{proof}
\subsection{Density of parabolic maps from $\lambda$-blender IFS}\label{Density of rotations from lambda-blender IFS}
We now introduce a technique based on  a dynamical rescaling. 
It enables  to show  that a group contains a   parabolic map nearby  a prescribed one, say $h: x\in P^1(\R)\mapsto  x/(2x+1)\in P^1(\R)$, and this for a dense set of perturbations of its generator.  This method involves  the $\lambda$-parablender of  \cref{exem lambda blender IFS} extended to the projective space $P^1(\R)$ and the map  $g^{\sc}$ which is conjugate to $\Delta: x\mapsto \frac32 x$ via the parabolic map $h$:
\begin{equation}\label{def gc} g^{\sc}=h\circ \Delta\circ h^{-1}: x\in P^1(\R)\mapsto  \frac32 \cdot\frac{x}{x+1}\in P^1(\R)\; .\end{equation}
The map $g^\sc$ has an expanding fixed point at $x_r=0$ with eigenvalue  $  3/2$.  
Given a perturbation $\tilde g^\sc$ of $g^\sc$ we denote $\tilde x_r$  the continuation of $x_r$. Let $\tilde g^{\sc^n}$  be the composition of $n$-times $\tilde g^\sc$. 
The proof of the following contains the  aforementioned key technique. It  will be generalized   to prove the main theorems,  and should be fruitful to generalize to an even broader setting.

\begin{proposition}\label{Importantclaimgroup}For every $r\ge 2$, for every $n\ge 1$ large and $C^r$-perturbation $(\tilde g^\sc, (\tilde g^\sb)_{\sb \in \sB^\lambda_0})$ of $(g^\sc, (g^\sb)_{\sb \in \sB^\lambda_0})$,   for every   $\sb_n\in (\sB^\lambda_0)^n$
if:
\[\tilde g^{\sb_n}(\tilde x_r)=\tilde x_r=\tilde g^\sc(\tilde x_r)\qand  \frac1n \log |D_{\tilde x_r}\tilde g^{\sb_n}|=- \log |D_{\tilde x_r}\tilde g^\sc|,\]
then  the map $ \breve h:= \tilde g^{\sc^n}\circ \tilde g^{\sb_n} $ is parabolic 
 and its restriction to  $[-3/2, 3/2]$  is   $C^r$-close to $h|[-3/2, 3/2]$. 
\end{proposition}
As $(0, \log3/2) $ belongs to the Lyapunov fibration of the $\lambda$-blender defined by $ (  g^\sb)_{\sB^\lambda_0}$, for $(\tilde g^\sc, (\tilde g^\sb)_{\sB^\lambda_0})$ in a $C^r$-neighborhood of  $(  g^\sc, (  g^\sb)_{\sB^\lambda_0})$, when $n$ is large, there is $\sb_n\in (\sB_0^\lambda)^{n}$ such that $\tilde g^{\sb_n}$  displays a fixed point close to $\tilde x_r$ with Lyapunov exponent close to  $\log |D_{\tilde x_r}\tilde g^\sc|$.  Thus, there is a perturbation of $\tilde g^\sc$ of the form $(1+\epsilon')\cdot \tilde g^\sc+\epsilon$  
satisfying  the assumptions of the above proposition. This shows:
\begin{corollary}\label{coroImportantgroup}
For every $C^2$-perturbation  $(\tilde g^\sc, (\tilde g^\sb)_{\sB^\lambda_0})$ of $(g^\sc, (g^\sb)_{\sB^\lambda_0})$, there exist $\epsilon, \epsilon'\in \R$ arbitrarily small such that  
the  semi-group  spanned by  $(\exp(\epsilon')\cdot \tilde g^\sc+\epsilon, (\tilde g^\sb)_{\sB^\lambda_0})$  contains  a parabolic diffeomorphism.
%
\end{corollary}

\begin{proof}[Proof of \cref{Importantclaimgroup}]
First note that if $\breve h\in \Diff^2(P^1(\R))$ has its restriction to 
$[-3/2,3/2]$ which is $C^2$-close to $h$ and has a parabolic point  nearby $0$, then $\breve h$ cannot have another fixed point in $[-3/2,3/2]$, indeed  the parabolic fixed point of $h(x)=x/(2x+1)$ is non-degenerate. It cannot have another fixed point in $P^1(\R)\setminus [-3/2,3/2]$ since $h$ sends $P^1(\R)\setminus [-3/2,3/2]$ onto $ [3/8,3/4]\Subset  (-3/2,3/2)$, and so the same holds true for $\breve h$. As $\breve h$ has by assumption a parabolic point nearby zero,  to show the proposition, it suffices to prove:
\begin{claim}\label{Importantclaimgroup1}Under the assumptions of \cref{Importantclaimgroup}, the restriction of the map $ \breve h:= \tilde g^{\sc^n}\circ \tilde g^{\sb_n} $ to  $[-3/2, 3/2]$  is   $C^r$-close to $h|[-3/2, 3/2]$. 
\end{claim}
Up to a coordinate change close to the identity, we can assume that  the fixed point $\tilde x_r$ of $\tilde g^\sc$ is equal to $0$. Then in these coordinates, we shall prove that $ \breve h|[-5/3, 5/3]$ is $C^r$-close to the identity. 
 \begin{theorem}[Sternberg \cite{S57}]\label{Sternerberg} There exists a map $\tilde h$ which is $C^r$-close to $h$, for which $0$ is a parabolic fixed point and  such that $\tilde h|[-2,2]=  \tilde g^{\sc^n}  \circ \tilde h\circ (D_0\tilde g^{\sc^n}  )^{-1} |[-2,2]$.
\end{theorem}
\begin{proof}  
Let $B$ be the subset of $ C^r([-2,2], \R)$ formed by maps $\bar h$ such that $\bar h(0)=0$ and $D_0 \bar h=1$, is complete for the distance $d(h, \tilde h)= \max_{2\le k\le r} \|D^k h-D^k\tilde h\|_\infty$.
The following operator:
\[  \bar h\in B \mapsto \tilde g^\sc  \circ \bar h\circ (D_0\tilde g^{\sc }  )^{-1} \; 
\]
 depends continuously on $\tilde g^{\sc}$ and has a contracting iterate. 
  Its fixed point $\tilde h\in C^r([-2,2], \R)$  depends continuously on $\tilde g^{\sc}$ and satisfies  $\tilde h|[-2,2] = \tilde g \circ \tilde  h\circ (D_0\tilde g^{\sc }  )^{-1}|[-2,2] $.  As when $\tilde g^{ \sc}=g^\sc$ the fixed point is $h$ by \cref{def gc},   when $\tilde g^{ \sc}$ is close to $g^\sc$  the fixed point $\tilde h\in B$ is $C^r$-close to $h$. 
\end{proof}
Then the dynamical rescaling techniques consists of noting that $\breve h= \tilde h\circ  \Phi$, where $\tilde h$ was bounded by the above Theorem and  $\Phi=D_0\tilde g^{\sc^n }   \circ   \tilde h^{-1}\circ \tilde g^{ \sb_{n} }  |[-2,2]$ is bounded by the following:
%
\begin{lemma}\label{lemm2finalClaimgroup}
The map $ \Phi=D_0\tilde g^{\sc^n }   \circ   \tilde h^{-1}\circ \tilde g^{ \sb_{n} }  |[-2,2]$ is $C^r$-close to the identity. \end{lemma}
This lemma implies that $\Phi([-5/3,5/3])$ is included in $[-2 ,2 ]$, and so 
 the  composition $ \tilde h\circ  \Phi$ is well defined on $[-5/3,5/3]$, equal to $\breve h$, and  is  $C^r$-close to $\tilde h$ and so $h$. This proves the claim. \end{proof}
\begin{proof}[Proof of  \cref{lemm2finalClaimgroup}]
By assumption of \cref{Importantclaimgroup}, $\tilde x_r=0$   is a neutral fixed point of  $\Phi$ which is equal to $(D_0\tilde g^{\sb_n }   )^{-1}\circ   \tilde h^{-1}\circ \tilde g^{ \sb_{n} }  |[-2,2]$.  Thus  it suffices to show that $D^2\Phi$ is $C^{r-2}$-small: 
\[D^2\Phi = (D_0 \tilde g^{\sb_n})^{-1}  \cdot D^2\tilde h^{-1}\cdot (D   \tilde g^{ \sb_{n} })^2+
\sum_{i=1}^n D_0\tilde g^{\sb_n }\cdot D\tilde h^{-1}\cdot D\tilde g^{ \sb_{n}^{n, i+1} }\cdot  D^2   \tilde g^{ \sb_n ^i } \cdot ( 
D   \tilde g^{ \sb_{n}^{i-1, 1}} )^2,\]
where $\tilde g^{ \sb_{n}^{n, i+1} }:= \tilde g^{ \sb_{n}^{n}}\circ \cdots \circ  \tilde g^{ \sb_{n}^{i+1} }$ and $\tilde g^{ \sb_{n}^{i-1,1} }:= \tilde g^{ \sb_{n}^{i-1}}\circ \cdots \circ  \tilde g^{ \sb_{n}^{1} }$ with $\sb_n:= \sb_n^n \cdots \sb_n^i \cdots \sb_n^1$.

The first term $(D_0\tilde g^{\sb_n })^{-1}\cdot D^2\tilde h^{-1}\cdot (D  \tilde g^{ \sb_{n}})^2$ is of the order of $|D  \tilde g^{ \sb_{n}}|$ which is  small. 
In the sum is small since the derivatives $D^2  \tilde g^{\sb_n^i }$ are all small, whereas $|(D_0  \tilde g^{ \sb_{n}})^{-1}\cdot D\tilde h^{-1}\cdot D\tilde g^{ \sb_{n}^{n, i+1} }|\cdot |  ( 
D   \tilde g^{ \sb_{n}^{i-1, 1}} )^2|$ is of the order of $| 
D   \tilde g^{ \sb_{n}^{i-1, 1}}  |$ which is exponentially $C^{r-1}$-small when  $n-i$ is large.  
\end{proof}

\section{Intrinsic definition of $(\lambda)$-(para)-blenders}
In this section we are going to embed the  $\lambda$ and/or parablender IFS into normally hyperbolic fibrations for a single differentiable dynamics of a manifold. The attractors of these IFS will persist as hyperbolic basic sets. We are going to introduce a formalism to study their intrinsic properties.  These  will be used in the next section via a generalization of \cref{coroImportantgroup}  together with   \cref{Densiteparabolic,ThmS3} to prove   Theorems~\ref{theoA}~and~\ref{theoB}.
 \subsection{Embedding a semi-group into a normally hyperbolic fibration}\label{Embedding a semi-group into a normally hyperbolic fibration}
 Let $1\le r\le  \infty$. Let $\sA$ be a finite alphabet and let $\Sigma=\sA^\I$ with $\I= \N$ or $\Z$.   This is a compact space endowed with the product topology. Its shift dynamics is denoted by  $\sigma:\ss=(\sa_i)_i\in \Sigma\mapsto  (\sa_{i+1})_i\in \Sigma$. Let $N$ be a compact manifold. 
The aim of this subsection is to recall that the orbit of a finitely generated  semi-group can be `persistently' embedded into the following class of maps.
  \begin{definition}   A  \emph{$C^r$-endomorphism of $\Sigma\times N$ over $\sigma$} is a self-map of 
  $\Sigma\times N$ of the form 
  \[g: (\ss , x)\in \Sigma\times N \mapsto (\sigma(\ss), g^{\ss}(x)),\]
  where  $( g^{\ss})_{\ss\in \Sigma}$ is a continuous family of $C^r$-maps of $N$. The space  of   $C^r$-endomorphisms over $\sigma$ is denoted by $\End^r_\sigma(\Sigma\times N)$.
 Two $C^r$-endomorphisms $g, \tilde g \in \End^r_\sigma(\Sigma\times N)$ are close if their induced families $(g^\ss)_{\ss\in \Sigma}$ and $(\tilde g^\ss)_{\ss\in \Sigma}$ are uniformly close for the $C^r$-topology.  
   \end{definition}
 \begin{example}[Canonical map of $\End^r_\sigma(N)$ 
 associated to $(g^\sa)_{\sa\in \sA}$]
 \label{canonical map}
A family  $(g^\sa)_{\sa\in \sA}$ of $C^r$-maps of $N$ is canonically associated to the following endomorphism of $\Sigma\times N$:
 \[g= (\ss, x)\mapsto (\sigma(\ss), g^\ss(x)),\quad 
\text{where }g^\ss=g^\sa\text{ if }\sa\in \sA\text{ is the letter of }\ss\text{ at the }0\text{-position.}\] 
 \end{example}
For every $\sm\in \sA^{(\N)}$, the element $g^\sm$ of the semi-group spanned by $(g^\sa)_{\sa\in \sA}$ is equal to the restriction of  $g^{|\sm|}$ to the $|\sm|$-periodic fiber $\{\sm^\infty\}\times N$, 
with $\sm^\infty\in \Sigma$ the $|\sm|$-periodic point  whose $\sA$-letters at the $0,1, ..., |\sm|-1 $ positions are those of $\sm$. Hence any element of the semi-group is equal to the dynamics of a periodic fiber.  Now we would like to embed this dynamics into the one of a manifold $M$. This requires the following notion:
 \begin{definition}   A \emph{$C^r$-embedding of $\Sigma\times N$ into $M$} is a map of the form 
  \[j: (\ss , x)\in \Sigma\times N \mapsto j^{\ss}(x)\in M,\]
  where  $( j^{\ss})_{\ss\in \Sigma}$ is a continuous family of $C^r$-embedding of $N$ into $M$ with disjoint images.
 Two $C^r$-embeddings are $C^r$-close if their families of $C^r$-maps are uniformly $C^r$-close. The space  of   $C^r$-embeddings of $\Sigma\times N$ into $M$  is denoted by $\Emb^r(\Sigma\times N , M)$.
   \end{definition}

Let $f$ be a $C^r$-self-map of $M$ which leaves invariant a fibration $\cal L=\bigcup_{\ss\in \Sigma} L_{\ss}$ where   $L_{\ss}:= j^\ss(N)$ is defined by  $j\in \Emb^r(\Sigma\times N, M)$. We assume that the dynamics between the fibers is given by the shift $\sigma$:  for every $\ss\in  \Sigma$, the fiber $L_\ss$ is sent by $f$ into the fiber $L_{\sigma(\ss)}$.  
%
 Then   $g:=j^{-1}\circ f\circ j$   is a $C^r$-endomorphisms of  $\Sigma\times N$ over $\sigma$.  Let us  restate  \cref{HPS} in this terminology:
\begin{theorem}\label{HPS2}Assume $f$ is $r$-normally hyperbolic at $\cal L$.  Moreover, if $f$ is not a diffeomorphism, we assume that $f$ is $r$-normally expanding at $\mathcal L$.
Then for every $\tilde f$ $C^r$-close to $f$ there exists an embedding $\tilde j\in \Emb^r(\Sigma\times N, M) $  close to $j$ and $\tilde g\in  \End^r_\sigma(\Sigma\times N) $ close to $g$  such that:
\[\tilde f\circ \tilde j=  \tilde j\circ \tilde g \; .\]
\end{theorem}
\begin{remark} If $f$ is normally expanding at $j(\Sigma\times N)$, then $\I$ must be equal to $\N$. If $f$ is normally hyperbolic but not normally expanding at $j(\Sigma\times N)$, then $\I$ must be equal to $\Z$.\end{remark}
\cref{HPS2} is actually the main motivation to consider the class $\End^r_\sigma(\Sigma\times N)$. Together with \cref{canonical map,embedding normally hyp}, it will enable to show that a finitely generated group of $\Diff^r(N)$ can be robustly embedded into a fibration in any manifold $M$ of dim $\ge 2$, modulo perturbations in $\End^r_\sigma(\Sigma\times N)$.  
Let us prepare also the $k\ge 1$-parameter version of this.

\begin{definition} A family $(g_p)_{p\in B_k}$ of maps $g_p$ in $ \Emb^r(\Sigma\times N, M)$ or  $\End^r(\Sigma\times N, M)$ is of class $C^r$ if each map $(p, x)\mapsto g_p(\ss,x)$ is of class $C^r$ and depends continuously on $\ss\in \Sigma$. Two such families are close if their corresponding  latter maps are uniformly  $C^r$-close. We denote by $\widehat \Emb^r_k(\Sigma\times N, M)$ and  $\widehat \End^r_{\sigma\, k}(\Sigma\times N)$ these spaces of $C^r$-families of    embeddings and endomorphisms. 
%
%
%
\end{definition}

\begin{corollary}\label{coroHPS2} If $(f_p)_p$ is a $C^r$-family of maps $f_p$ of $M$ satisfying the assumption of \cref{HPS2} with $\cal L_p=j_p(\Sigma\times N)$, where $(j_p)_{p\in B_k}$ is a $C^r$-family of embeddings of $\Sigma \times N$ into $M$, 
 then for every $(\tilde f_p)_p$ $C^r$-close to $(f_p)_p$, there exists a family $(\tilde j_p)_p$ of  embeddings $C^r$-close to $(j_p)_p$ and a family $(\tilde g_p)_p$ of endomorphisms of $\Sigma\times N $ $C^r$-close to $(j_p^{-1}\circ f_p\circ j_p)_p$  such that:
\[\tilde f_p\circ \tilde j_p=  \tilde j_p\circ \tilde g_p \; .\]
\end{corollary}
\begin{proof}
We first consider the $k$-ball $B_k$ as a subset of  $\hat B_k:= P^k(\R)$  and we extend the families    $(f_p)_p$ and $(j_p)_{p}$  to $C^r$-families of maps parametrized by $\hat B_k$. Then we notice that $\hat f:= (p, x)\mapsto (p, f_p(x))$ and $\hat j:= (\ss, p,x)\mapsto j_p^\ss(x)$  satisfies the assumption of \cref{HPS2} for the fibration $\Sigma\times  \hat N$ where $\hat N= \hat B_k\times N$.  As for every $C^r$-perturbation $(\tilde f_p)_p$, the map $\hat {\tilde f}:= (p, x)\mapsto (p, \tilde f_p(x))$ is $C^r$-close to $\hat f$, there exists an embedding  $\hat {\tilde j}$ of $\Sigma\times \hat N$ which is $C^r$-close to $\hat j$ and which is left invariant by $\hat {\tilde f}$. 

Thus for any $p\in B_k$ and $\ss\in \Sigma$, the submanifold $ \hat {\tilde j}(\{\ss\}\times \hat B_k\times N)$ intersects transversally $\{p\}\times M$ at a submanifold  $\tilde j_p^\ss(N)$. By transversality this defines a $C^r$-family $(\tilde j^\ss_p)_{p\in B_k}$ of embedding $\tilde j^\ss_p:N\hookrightarrow M$,
which depends continuously on $\ss\in \Sigma$. 
Put $\tilde j_p: ( \ss, x)\mapsto \tilde j^\ss_p(x)$; it is a $C^r$-embedding of $\Sigma\times N$ into $M$. Also the family  $(\tilde j_p)_{p\in B_k}$ is of class $C^r$. 
  Note that the submanifold $\tilde j_p^\ss(N)$ is sent by $\tilde f_p$ into $\tilde j_p^{\sigma(\ss)}(N)$, since   $\hat {\tilde f}$ leaves invariant  $\{p\}\times M$ and sends 
  $ \hat {\tilde j}(\{\ss\}\times \hat B_k\times N)$ into  $ \hat {\tilde j}(\{\sigma(\ss)\}\times \hat B_k\times N)$, while  $\{p\}\times j_p^{\ss}(N)= \{p\}\times M\cap  \hat {\tilde j}(\{\ss\}\times \hat B_k\times N)$ and $\{p\}\times j_p^{\sigma(\ss)}(N)= \{p\}\times M\cap  \hat {\tilde j}(\{\sigma(\ss)\}\times \hat B_k\times N)$.\end{proof}

The following enables to embed    a finitely generated semi-group of circle diffeomorphisms into the periodic fibers of a normally hyperbolic fibration of any manifold $M$ of dimension $n \ge 2$. A perturbation of this embedding persists is the sense of \cref{HPS2}. 
\begin{proposition}\label{embedding normally hyp}
For any $(g^\sa)_\sa\in \End^r(N)^\sA$ with $N$ a circle and $1\le R\le r \le  \infty$ with $ R<  \infty$,  there are a $C^r$-map $f$ of $M$ and a fibration $\cal L=j(\Sigma\times N)$ with $j\in \Emb^r (\Sigma\times N, M)$ such that:
\begin{enumerate}
    \item if $n\ge 2$ and $\I=\N$, then  the map $f$ is $R$-normally expanding at $\cal L$,
    \item if $n>2$, $\I=\Z$ and each $g^\sa$ is a diffeomorphism preserving the orientation, then  $f$ is a diffeomorphism  $R$-normally hyperbolic~at~$\cal L$,
    \item  $g=j^{-1}\circ f\circ j$ is the canonical endomorphism associated to $(g^\sa)_{\sa\in \sA}$  given by \cref{canonical map}.
\end{enumerate}      
Similarly, for any     $((g^\sa _{p})_{p\in \B_k} )_{\sa\in \sA}\in \widehat {\End_{ k}^r}(N)^\sA$,  there are an embedding  $j\in \Emb^r(\Sigma\times N, M)$ and  $C^r$-family $(f_p)_p$ of maps $f_p$ of $M$    such that  for every $p\in B_k$, 
the maps $f_p$ and $\cal L:=j(\Sigma\times N)$ satisfy   $(1)$, $(2)$ and $(3)$ with $(g^\sa _{p} )_{\sa\in \sA}$.
\end{proposition}
\begin{proof}
It suffices to prove this proposition in the case where $M=(-1,1)^{n-1} \times N$ with $f$  (resp. each $f_p$) coincides with the identity  nearby the boundary of $M$. Indeed as $N$ is a circle, the set  $(-1,1)^{n-1} \times N$ can be embedded into any $n$-manifold   and the dynamics can be extended by the identity outside of this embedding. So let us assume that $M= (-1,1)^{n-1} \times N$. 
 Let us now focus on the parametric case (the parameter free version is obtained by taking $k=0$).

When $\I=\N$, let $S$ be a smooth map of $(-1,1)^{n-1}$ equal to the identity at the neighborhood of $\partial [-1,1]^{n-1}$ and such that $S$ leaves invariant an expanding  Cantor set $K$ which is conjugate to the shift  on $ \sA^\N$.  Up to replacing $S$ by an iterate and taking a $K$, we can assume that the restriction of  $DS|K$ is $R$-times more  expanding than any derivatives of $(g^\sa_p)_{\sa\in \sA,\,  p\in B_k}$. 

When $n \ge 3$ and $\I=\Z$, let $S$ be a smooth diffeomorphism of $(-1,1)^{n-1}$ equal to the identity at the neighborhood of $\partial [-1,1]^{n-1}$ and such that  $S$ leaves invariant a hyperbolic horseshoe   $K$ which is conjugate to the shift $\sigma$ on $\sA^\Z$. 
We are going to construct $f_p $ of the form: $f_{p}:= (\ss,x) \mapsto (S^m(\ss), f^{\ss}_{p}(x))$ for some $m\ge 1$. Up to replacing $S$ by an iterate and taking a subset of $K$, we can assume that the restriction of  $DS$ to the stable and unstable directions of $K$ are $R$-times more contracting and expanding than any derivativative of $(g^\sa_p)_{\sa\in \sA, \,  p\in B_k}$. 

\begin{fact}  There is  a $C^r$-family  $(f_p^\ss )_{\ss\in (-1,1)^{n-1}, p\in B_k}$ of self-maps  $f_p^\ss$ of $N$  such that for any $p\in B_k$:
\begin{enumerate}
    \item $f^\ss_p$ coincides with the identity nearby the boundary of $ (-1,1)^{n-1}$,
    \item  $f^\ss_p=g_p^{\sa}$ if $\ss$ belongs to $ K$ and corresponds to a $\sA$-sequence with $\sa\in   \sA$ at the $0$-position.
    \item If for every $p$, the maps $(g^\sa_p)_{\sa\in \sA} $ are orientation preserving diffeomorphims, then the maps $f^\ss_p$ are orientation preserving  diffeomorphims for every $\ss\in [-1,1]^{n-1}$. 
\end{enumerate}\end{fact}
   \begin{proof} The construction can be done using bump function and homotopies in the set of $C^r$-endomorphisms of $N$  or $C^r$-diffeomorphisms preserving the orientation of $N$. \end{proof}  
We can consider the immersion $j: \Sigma\times N\to K'\times N\subset K\times N\subset M$ which is $R$-normally expanding or hyperbolic for 
$f_{p}:= (\ss,x) \mapsto (S(\ss), f^{\ss}_{p}(x))$. 
\end{proof}
\begin{remark}\label{pour variete Wssuu}
In the proof of the latter proposition for the diffeomorphisms case, we can assume that the hyperbolic basic set $K$ has unstable dimension $1$ and that it is a subset of an attractor $A$  (for instance using a Plykin attractor). Then there is a neighborhood of $N\times A$ which is sent into itself  by the dynamics and on which the dynamics is partially hyperbolic. So this neighborhood is  $C^{r-1}$-foliated by local strong stable manifold of dimension $\dim M-2$, by \cite{HPS} or \cite[thm 3.2]{RHRHU97}. Also if $f$ depends $C^r$-on a parameters, this foliation depends $C^{r-1}$ on the parameter.
\end{remark}

It will be more comfortable while working with blenders to only deal with endomorphisms of $\sA^\N\times N$. In order to do so, we consider
the canonical projection $\rho:  \Sigma= \sA^\Z\to  \sA^\N$  and put $\ss\sim \ss'$ if $\rho(\ss)= \rho(\ss')$ for $\ss, \ss'\in \Sigma$. Let  $\tilde \Sigma:=\{(\ss, \ss')\in \Sigma^2:  \ss\sim \ss' \}$.
\begin{proposition}\label{to monoinfinite shift}
Under the setting  of \cref{embedding normally hyp} with $\I=\Z$, there exists a continuous family $(\mathsf {hol}^{\ss,\ss'})_{(\ss,\ss')\in \tilde \Sigma}$ of homeomorphisms of $N$ such that:
\begin{enumerate} 
\item $\mathsf {hol}^{\ss, \ss'}$ is the identity if $\ss=\ss'$,
\item $\mathsf {hol}^{\sigma (\ss),\sigma (\ss')}\circ \tilde g^{\ss}= \tilde g^{\ss'}
\circ  \mathsf {hol}^{\ss, \ss'}$ for every $(\ss,\ss')\in\tilde  \Sigma $.
\item If  $\dim N=1$ then  $\mathsf {hol}^{\ss,\ss'}$ is in $\Diff^{r-1}(N)$  and  depends continuously on $(\ss,\ss')\in \tilde \Sigma$ 
\end{enumerate} 
Under the setting of \cref{coroHPS2} with $\I=\Z$, there exists a continuous family $(\mathsf {hol^{\ss,\ss'}_p})_{p\in B_k}$ of  homeomorphisms $\mathsf {hol^{\ss,\ss'}_p}$ of $N$ satisfying $(1)$   and $(2)$ with $\tilde g_p$ for every $p\in B_k$. If  $\dim N=1$ then  $(\mathsf {hol}^{\ss,\ss'}_p)_p$ is a $C^{r-1}$ family of $C^{r-1}$-diffeomorphisms and  depends continuously on $(\ss,\ss')\in \tilde \Sigma$. 
%
\end{proposition}
\begin{proof}We define $\mathsf {hol}^{\ss,\ss'}$ as the holonomy from $\tilde j^\ss(N)$ and 
 $\tilde j^{ \ss'}(N)$ along the strong stable foliation defined in \cref{pour variete Wssuu}.
\end{proof}
Given a section  $\iota :   \sA^\N\hookrightarrow \sA^\Z$ of $\rho$, the latter proposition defines a semi-conjugacy between $g$ and the map $(\ss, x)\in \sA^\N\times N \mapsto (\sigma(\ss), g^{\iota(\ss)}(x))$ and so enables to focus  on the case $\I=\N$.  
%

 \subsection{Intrinsic definition of $(\lambda)$-blender
 }\label{def lambda blender}
The notion of blender was introduced by Bonatti and Diaz in \cite{BD96}, as a hyperbolic  basic set  for a  $C^1$-self-map  (resp. diffeomorphism) $f$ of a manifold. Such are included in a normally expanding (hyperbolic) fibration and so always included in a fibration $\Sigma\times N$ embedded into $M$, whose fibers are either compact or weakly contracted by the dynamics.  The dynamics induced by $f$ on this fibration is a map  $g\in \End^r_\sigma (\Sigma\times N)$ for a certain $\sigma\in C^0(\Sigma, \Sigma)$. This leads us to propose an intrinsic definition of blender: it regards only the map $g$.   For the sake of simplicity, we  restrict our study to the case where  $\sigma:\Sigma\to \Sigma$ is the shift on $\sA^\I$, with $\I= \N$ or $\Z$. In the case $\I=\Z$, using the holonomy along the strong stable manifolds defined  in \cref{to monoinfinite shift}, we recall that the map $g$ is semi-conjugated to one for which $\I=\N$. So we will focus on the case $\I=\N$.  For the sake of completeness the standard definition of blender is recalled in \cref{appendix}, it is not more general nor simpler to use than the intrinsic definition we shall introduce.
 
\medskip 

Let $g\in \End^1_\sigma(\Sigma\times N)$ with $\Sigma= \sA^\N$. A $g$-invariant compact subset $\Lambda\subset \Sigma \times N$   is a  \emph{basic set} if it is {transitive} and  \emph{locally maximal}: it is the maximal invariant set in one of its neighborhood $V_\Lambda$. The basic set $\Lambda$ is \emph{centrally contracting} if 
$g|\Lambda \cap \{\ss\}\times N$ is contracting   for every $\ss\in \Sigma$.  Then 
for every $C^1$-perturbation 
$\tilde g$ of  $g$, the maximal invariant $\tilde \Lambda:= \bigcap_{n\in \Z} \tilde g^n(V_\Lambda)$ is called the \emph{continuation of $\Lambda$}. One can show that it does not depend $V_\Lambda$ provided that $\tilde g$ is sufficiently $C^1$-close to $g$ and that $\tilde \Lambda$ is a  centrally contracting basic set for $\tilde g$.
Let $\arr V_\Lambda(\tilde g)$ be the set of $(v_n)_{n<  0} \in   V_\Lambda^{\Z^-}$ such that  $\tilde g (v_{n-1})=v_n$ for every $n< -1$. By central contraction and local maximality of $\tilde \Lambda$, one can show that $v_n \to \tilde \Lambda$ when $n\to -\infty$. Thus the following is called a \emph{local unstable set} of $\tilde \Lambda$:
\[W^u_{loc} (\tilde \Lambda):= \{v_0\in \Sigma\times N: 
\exists (v_n)_{n<  0} \in \arr V_\Lambda(\tilde g)\text{ and } \tilde g(v_{-1})=v_0\}\; .\] 
When $\tilde g=g$, we put $W^u_{loc} (\Lambda):=W^u_{loc} (\tilde \Lambda)$. When $\tilde g\neq g$, we say that $W^u_{loc} (\tilde \Lambda)$ is \emph{the continuation} of $W^u_{loc} (\Lambda)$. 
The following is an intrinsic definition of  the Bonatti-Diaz blender \cite{BD96}.
\begin{definition}[Blender for $\End^1_\sigma(\Sigma\times N)$] \label{def: blender intrinic}The centrally contracting, basic set $\Lambda$ is a \emph{blender} if a local unstable set $ W^u_{loc}(\Lambda) $ has nonempty interior and  for any  subset $K\Subset  W^u_{loc}(\Lambda)$, for every  $C^1$-perturbation $\tilde g$ of $g$, the continuation $ W^u_{loc}(\tilde \Lambda)$ contains $K$.  
\end{definition}
\begin{example} \label{example of blender skew}
Let  $(g^\sb)_{\sb\in \sB_0} $ be the family of maps of \cref{blender IFS}. Let   $\sA$ be a finite alphabet which contains $\sB_0:=\{-, +\}$.  
Let $g  \in \End^1(\Sigma \times \R )$  be such that $g| \sB_0^\N\times \R$ is the canonical map associated to  $(g^\sb)_{\sb\in \sB_0}$ (see \cref{canonical map}).  Then the set  $\Lambda= \sB_0^{\N}\times [-1,1]$ is a blender for $g$ and 
$\Sigma\times [-1,1]$ is a local unstable set of $\Lambda$. 
\end{example}
\begin{proof} 
Let $V_\Lambda$ be the subset of points $(\ss, x)\in \Sigma\times (-2,2)$ such that  the first letter of $\ss$  is in $\sB_0$. It is a neighborhood of $\Lambda$ and so its maximal invariant set must contain $\Lambda$. Also the maximal invariant set of  $V_\Lambda$ must be included in $\sB_0^\N\times (-2,2)$ and so is equal to $\Lambda$ by central contraction. Let $K:=(-1+\eta, 1-\eta)$ for $\eta>0$ small. 

By the open covering property proved in \cref{blender IFS}, for  $\tilde g$ sufficiently close to $g$, for all $\ss_0\in \Sigma= \sA^\N$ and $x_0\in K$, their exists $\sa_{-1}\in \sB_0$ and $x_{-1}\in K$  such that $g^{\ss_{-1}}(x_{-1})=x_0$ with $\ss_{-1}\in \sigma^{-1}(\ss_0)$ with $0$-coordinate equal to $\sa_{-1}$. We can iterate this process to construct a preorbit $(\ss_{-n}, x_{-n})_{n\le -1} \in \arr V_\Lambda(\tilde g)$ such that each  $x_{-n}$ is in $ K$ and $g(\ss_{-1}, x_{-1})=(\ss_0, x_0)$. 
\end{proof}

Under the assumptions of \cref{def: blender intrinic}, let $V_\Lambda$ be the neighborhood of $\Lambda$ defining the local unstable set $W^u_{loc}(  \Lambda)$. Let $\tilde g$ be a $C^1$-perturbation of $g$. The \emph{Lyapunov fiber} $\lambda( \tilde g, \arr v)$ of  $\arr v=(v_{j})_{j\le 0}\in   \arr V_\Lambda(\tilde g)$ is the  set of cluster values of $(\frac1{m}\log |\partial_x   \tilde  g^m(v_{-m})|)_m$; this is an interval of $\R$.  
The \emph{Lyapunov fibration} of $W^u_{loc} (\tilde \Lambda) $ is:
\[\lambda(W^u_{loc} (\tilde \Lambda)):= \bigcup_{\arr v \in \arr V_\Lambda(\tilde g)} \{ v_0 \}\times \lambda( \tilde g, \arr v)\]

Here is the counterpart of the notion of $\lambda$-blender introduced for finitely generated semi-groups of circle diffeomorphisms. This new notion is devoted to  prove Theorem \ref{theoA} using \cref{Densiteparabolic}.
 \begin{definition}[$\lambda$-Blender for  $\End^1_\sigma(\Sigma \times N)$]\label{def: lambda blender intrinic} 
A blender $\Lambda$ for  $g \in \End^1(\Sigma\times N ) $ is  a   $\lambda$-\emph{blender}  if the Lyapunov fibration of a local unstable set  $ W^u_{loc} (\Lambda) $ has nonempty interior  and given any  subset $ K \Subset \lambda(W^u_{loc} (\Lambda) )$, 
 for every $C^1$-perturbation of $\tilde g$,  the Lyapunov fibration of the continuation $ W^u_{loc} (\tilde \Lambda) $ contains $ K $.
\end{definition}
A generalization of the following will be  proved in \cref{exem lambda parablender skewproduct}.
\begin{example}\label{skewprod lambda blend}
Let $(g^\sb)_{\sb\in \sB_0^\lambda}$ be the family of maps of  \cref{exem lambda blender IFS}. Let   $\sA$ be a finite alphabet which contains $\sB^\lambda_0$ and $\Sigma:= \sA^\N$.   Let $g  \in \End^1(\Sigma \times \R )$  be such that $g| (\sB_0^\lambda)^\N\times P^1(\R)$ is the canonical map associated to  $(g^\sb)_{\sb\in \sB_0}$. Then  $\Lambda= (\sB_0^\lambda)^{\N}\times [-1,1]$ is a $\lambda$-blender for $g $. Then the set $ W^u_{loc} (\Lambda) =\Sigma\times [-1,1]$ is a local unstable set of $\Lambda$ and its Lyapunov fibration is:
\[ W^u_{loc} (\Lambda) \times \left[\log \frac 23-\epsilon ,\log \frac 23+\epsilon \right]\]
\end{example}
\subsection{Intrinsic definition of  $C^r$-$(\lambda)$-parablender}\label{intrinsic def parablender}
Let $ g= (g_p)_{p\in B_k}$ be a  $C^r$-family  of maps $g_p\in \End^r_\sigma(\Sigma\times N)$, for $\Sigma= \sA^\N$, $1\le r<\infty$ and $k\ge 0$. For $\ss\in \Sigma$ and $p\in B_k$, let $g^\ss_p$ be the second coordinate of the restriction  $g_p|\{\ss\}\times N$. Let $g^\ss=(g^\ss_p)_p$.
For every $p_0\in B_k$, we denote:
\[ J^r_{p_0}  g:= (\ss, J^r_{p_0} x)\in\Sigma \times J^r_{p_0} N\mapsto  (\sigma(\ss), J^r_{p_0} g^\ss(   J^r_{p_0}x))\in \Sigma \times J^r_{p_0} N\; .\]
 
Assume that  $g_{p_0}$ has a blender  $\Lambda_{p_0}$ which is locally maximal in a certain neighborhood $V_\Lambda $ of $\Lambda_{p_0}$. Let     $W^u_{loc}(\Lambda_{p_0})$ be the local unstable set of $\Lambda_{p_0}$ associated to $V_\Lambda$. 
Let    $\Sigma'$ be the image of $V_\Lambda$ via the first coordinate projection $\Sigma \times N\to \Sigma$.  Let $\arr \Sigma'$ be the set of $\sigma$-preorbits $(\ss_j)_{j< 0}\in \Sigma'^{\Z^-}$ and put $\ss_0= \sigma(\ss_{-1})$.  
 We choose a continuous family  $\Omega = (\Omega_\ss)_{\ss\in \Sigma'}$ such that $\{(\ss, \omega_\ss): \ss\in \Sigma'\}$ is included in $V_\Lambda$. 
Then the set $W^u_{loc} (\Lambda_{p_0})$ is formed by the points $v(\u \ss)= \lim_{i \to +\infty}  g_{p_0}^{ i } (\ss_{-i}, \Omega_{\ss_{-i} })$  among  $\u \ss =(\ss_j)_{j<0}\in \arr \Sigma' $.  Note that:  
\[v(\u \ss)= (\ss_0, X_{p_0}(\u \ss))\text{  with }  X_{p_0}(\u \ss) = \lim_{i \to +\infty}  g_{p_0}^{\ss_{-i}}\circ  \cdots \circ g_{p_0}^{s_{-1}}(\Omega_{\ss_{-i} })\text{ and } \ss_0=\sigma(\ss_{-1})\; .\] 
  This limit does not depend on $\Omega$. Also for every $p$ nearby $p_0$,  the continuation $W^u_{loc} (\Lambda_p)$ of $W^u_{loc} (\Lambda_{p_0})$ is formed by the  points $v(\u \ss)= (\ss_0, X_{p}(\u \ss))$ with $X_{p}(\u \ss) = \lim_{\infty}  g_{p}^{\ss_{-i} \cdots s_{-1}}(\Omega_{\ss_{-i}})$. 
Furthermore  the  family $(X_{p}(\u \ss))_p$  is of class $C^r$ for every $\u \ss  \in \arr \Sigma'$ and depends continuously on $\u \ss$. Put:
\[  J^r_{p_0}W^u_{loc}(  \Lambda) :=   \{(\ss_0, J^r_{p_0} X(\u \ss)): {\u \ss \in \Sigma'}\}\subset \Sigma\times J^r_{p_0} N\; .\]
Similarly for $\tilde f$ $C^r$-close to $g$ and $p$ close to $p_0$, we can define $\tilde X_{p}(\u \ss) = \lim_{\infty}  \tilde g_{p}^{\ss_{-i} \cdots s_{-1}}(\Omega_{\ss_{-i}})$ and:
  \[  J^r_{p_0}W^u_{loc}( \tilde  \Lambda) :=   \{(\ss_0, J^r_{p_0} \tilde X(\u \ss)): {\u \ss \in \Sigma'}\}\subset \Sigma\times J^r_{p_0} N\; .\]

\begin{definition}[$C^r$-parablender for $\widehat {\End^r_{\sigma, k}}$]\label{def parablender}
The continuation   $(\Lambda_{p})_{p }$  is a \emph{$C^r$-parablender  at $p_0$}  if 
 $J^r_{p_0}W^u_{loc}(  \Lambda) $ has nonempty interior and for every $K \Subset
 J^r_{p_0}W^u_{loc}(  \Lambda) $, the  continuation
$ J^r_{p_0}W^u_{loc}( \tilde  \Lambda) $ of  $J^r_{p_0}W^u_{loc}(  \Lambda) $
of any sufficiently small  $C^r$-perturbation $ \tilde g $ of $ g $  contains $ K$. 
\end{definition}
We give an example below of parablender. Let us continue by introducing the $C^r$-$\lambda$-parablender. Now $N$ is a curve with tangent space identified to $N\times \R$.  The \emph{Lyapunov fiber} of $(\ss_0, J^r_{p_0} X(\u \ss))$ for $\underline \ss \in \arr \Sigma'$ is the set $\lambda(g,  J^r_{p_0}  X(\u \ss))$   of cluster values of $(J^{r-1}_{p_0} \frac1{i} \log |\partial_x  g_{p}^{i}(\ss_{-i}, \Omega_{\ss_{-i}})|)_{i>0} $. The following is  the \emph{Lyapunov fiberation} of $J^r_{p_0}W^u_{loc}(\Lambda)$: 
  \[   \lambda(J^r_{p_0}W^u_{loc}(\Lambda)):=  \bigcup_{\u \ss\in \arr \Sigma'} \{(\ss_0, J^r_{p_0}   X(\u \ss))\}\times \lambda(g,  J^r_{p_0}  X(\u \ss))\; .\]
%
We notice that $ \lambda(J^r_{p_0}W^u_{loc}(\Lambda))$ is a compact subset of $\Sigma \times J^r_{p_0} N\times J^{r-1}_{p_0} \R$. 
Here is the parametric counterpart of the $\lambda$-blender defined in \ref{def: lambda blender intrinic}; it  is devoted to  prove  Theorem \ref{theoB} using \cref{ThmS3}.
\begin{definition}[$\lambda$-parablender  for $\widehat {\End^r_{\sigma, k}}(\Sigma\times N )$]\label{def lambda parablender} The continuation  $(\Lambda_p)_p$  of a blender       
 is  a $C^r$-$\lambda$-\emph{parablender}  at $p_0$  if $ \lambda(J^r_{p_0}W^u_{loc}(\Lambda))$   has nonempty interior  and  any subset $ K \Subset \lambda(J^r_{p_0}W^u_{loc}(\Lambda) )$ is contained in the 
  Lyapunov fibration   of $J^r_{p_0}W^u_{loc}(\tilde \Lambda)$  for  any sufficiently small  $C^r$-perturbation $(\tilde g_p)_{p}$ of $(g_p)_{p}$.\end{definition}
\begin{remark}
We notice that  $\Lambda_{p_0}$ is a $\lambda$-blender for $g _{p_0} $ and 
$(\Lambda_p)_p$ is a $C^r$-parablender at $p=p_0$ for  $(g_p)_p$. When $k=0$,   $(\Lambda_p)_p$  is a  $C^r$-$\lambda$-\emph{parablender}  at $p_0$ iff $\Lambda_{p_0}$ is a $\lambda$-blender for $g _{p_0} $.\end{remark}
 
As promised here is an example of $C^r$-parablender.
\begin{example}\label{exem parablender skewproduct} Let  $(g^\sb_p)_{p\in B_k, \sb \in \sB_r}$ be the family of maps of  \cref{expparablendergroupe}. Let   $\sA$ be a finite alphabet which contains $\sB_r$.  
Let $(g_p)_p  \in \widehat {\End^r_{\sigma, k}}(\Sigma\times N )$  be such that $g_p| \sB_r^\N\times \R$ is the canonical map associated to  $(g^\sb_p)_{\sb\in \sB_r}$ for every $p\in B_k$.  Then the set  $\Lambda_0= \sB_r^{\N}\times [-1,1]$ is a blender for $g_0$ and its continuation for $(g_p)_p$  is a $C^r$-parablender  at $p=0$ with a local unstable set 
  satisfying:
\[J^r_0W^u_{loc}(\Lambda):= \Sigma\times  \left \{ \sum_{i\in E_r } \xi_i \cdot p ^{i}: (\xi_i)_i\in [-1,1]^{E_r}\right\}\]
 \end{example}
We skip the proof of this example since it is similar to the one for IFS and useless for the proof of main Theorem \ref{theoB}. Let us focus on the following:

\begin{example} \label{exem lambda parablender skewproduct}
 Let  $(g^\sb_p)_{p\in B_k, \sb \in \sB_r^\lambda}$ be the family of maps of  \cref{lambda parablender IFS} for $\epsilon>0$ sufficiently small. Let   $\sA$ be a finite alphabet which contains $\sB_r$.  
Let $(g_p)_p  \in \widehat  {\End^r_{\sigma, k}}(\Sigma \times \R )$  be such that $g_p| (\sB_r^\lambda)^\N\times \R$ is the canonical map associated to  $(g^\sb_p)_{\sb\in \sB_r^\lambda}$ for every $p\in B_k$.  Then the set  $\Lambda_0=  (\sB_r^\lambda)^\N \times [-1,1]$ is a $\lambda$-blender for  $g_0$ and its continuation for $g=(g_p)_p$  is a $C^r$-$\lambda$-parablender  at $p=0$ with a local unstable set satisfying:
\[\lambda(J^r_0W^u_{loc}(\Lambda)) = \Sigma\times  
 \left \{ \Big(\sum_{i\in E_r } \xi_i \cdot p ^i,\log \frac23 +\epsilon \sum_{i\in E_{r-1} } \lambda_i \cdot p^i \Big)
 : (\xi_i)_i\in [-1,1]^{E_r}, (\lambda_i)_i\in [-1,1]^{E_{r-1}}\right\}\]
 \end{example}
 \begin{proof}
Let $V_\Lambda$ be the subset of points $(\ss, x)\in \Sigma\times (-2,2)$ such that  the $0$-coordinate of $\ss$  is in $\sB_r^\lambda$. It is a neighborhood of $\Lambda_0$ .  For the same reasons as in  \cref{example of blender skew},   the local unstable set associated to this neighborhood is  $W^u_{loc}( \Lambda_{0})= \Sigma \times  [-1,1]$.  Let $\Sigma'$ be the subset of points in $\Sigma$ whose $0$-coordinate is in $\sB_r^\lambda$. Let $\arr \Sigma'$ be the set of $\sigma$-preorbits  $(v_j)_{j<0}$ with $v_j\in \Sigma'$. Let $\Omega_\ss=0$   for every $\ss\in \Sigma$.  As for  \cref{lambda parablender IFS}, it is easy to see that  the Lyapunov fibration $\lambda(J^r_0W^u_{loc}(\Lambda))$ has the above form. It remains to show that it is a $C^r$-$\lambda$-parablender. For $\eta>0$ small, put:
\[K = \Sigma\times  
 \left \{ \Big(\sum_{i\in E_r } \xi_i \cdot p ^i,\log \frac23 +\epsilon \sum_{i\in E_{r-1} } \lambda_i \cdot p^i \Big)
 : (\xi_i)_i\in I^{E_r}, (\lambda_i)_i\in I^{E_{r-1}}\right\}\, \quad \text{with } I:= [-1+\eta ,1-\eta] \; .\]
 
  Let $\tilde g $  be a  $C^r$-perturbation of $g $. Let us show for every $ (\ss_0, x_0,\ell)  \in  K $ and $n> 0$, the existence of a sequence of letters $(\sb_j)_{-n\le j< 0}\in (\sB^\lambda_r)^{n}$ and a sequence of points $(\ss_j,   x_j, \ell_j)_{-n\le j< 0}\in  K^{n } $ such that with $\ell_0=0$ and  $\ss_{j}\in \Sigma$ equal to the concatenation of $\sb_{j}\cdots \sb_{-1}$ with $\ss_0$,  it holds:
  \begin{enumerate} 
  \item    $J^r_{p_0} \tilde g^{\ss_i}( x_i)=  x_{i+1}$ for every $-n\le i<0$,
 \item 
$\ell_{-n}=J^{r-1}_{p_0}( \log  |\partial_x \tilde g^{n}_p(\ss_{-n}, x_{-n})| )_p\in \left \{ n \cdot \ell + \sum_{i \in E_{r-1}  } \lambda_i \cdot p ^i 
 :  \lambda_i  \in [-2\epsilon, 2\epsilon]  \right \}$.
  \end{enumerate}
The step $n=0$ is obvious. Assume the result shown for $n\ge 0$. 
Then by the covering property showed on  in \cref{blender IFS,expparablendergroupe}, 
with $\sb_{-n-1}\in  \sB^\lambda_r $ the vector whose coefficients are the sign of those of the polynomial $(x_{-n}, -\log  |\partial_x \tilde g^{n}_p(\ss_{-n}, x_{-n})| +n\cdot \ell )$. Then for the same reasons as for  
\cref{blender IFS,exem lambda blender IFS}
 the induction hypothesis holds true at step $n+1$.  
 Using that $\tilde g$ is contracting nearby $\Lambda$, by \cref{prop 1}.(1), 
 it holds that $J^r_{p_0}\tilde g^{\ss_{-i}}\circ  \cdots \circ \tilde g^{\ss_{-1}}(\Omega_{\ss_{-i}})-J^r_{p_0}\tilde g^{\ss_{-i}}\circ  \cdots \circ J^{r-1}_{p_0}\tilde g^{\ss_{-1}}(x_{-i})$ is small when $i$ is large. This implies directly  first limit and by Ces\`aro means the second one:
\[J^r_{p_0} \tilde X(\u \ss)=\lim_{i\to \infty} J^r_{p_0}\tilde g_{p}^{\ss_{-i}}\circ  \cdots \circ \tilde g_p^{\ss_{-1}}(x_{-i})=
x_0\qand 
\lambda(\tilde g, J^r_0 \tilde X(\u \ss))=  \lim_{i\to \infty}\frac{\ell_{-i}} i  =\ell\, .
\]
 \end{proof}
\section{Proof of the main Theorems} 
 We have introduced all the concepts and techniques needed to prove main Theorems \ref{theoA} and \ref{theoB}. Both  proofs are similar: we will show that the sets $D$ and $\hat  D$ involved in 
 \cref{Densiteparabolic,ThmS3} are locally dense. In order to do so, we are going to use  $\lambda$-($C^r$-para)-blender and implement the dynamical rescaling techniques of \cref{Density of rotations from lambda-blender IFS}. The density in  the proof of Theorem \ref{theoA} will be done along $2$-parameter families. This will enable to obtain even the local density of analytic maps with a normally hyperbolic, periodic circle at which $f$ is parabolic.
 
\subsection{Theorem  \ref{theoA}}\label{sec:Theorem  theoA}
\begin{proof}[Proof of Theorem \ref{theoA}] It suffices to show that the closure of the set $D$ of \cref{Densiteparabolic} has  nonempty  $C^r$-interior for any $2\le r\le \infty$ which  intersects $\Diff^r(M)$  when $n=\dim M\ge 3$. 

Let $\sB^\lambda_0$ and $(g^\sb)_{\sb \in \sB^\lambda_0}$ be as  in \cref{exem lambda blender IFS}. We consider the extension of  these maps to the projective space $P^1(\R)$. 
 Let $\sA= \sB^\lambda_0\cup \{\sc\}$ for a symbol $\sc$  and put: \[g^\sc: x\in P^1(\R)\mapsto \frac32 \frac x{x+1}\in P^1(\R).\]  
By \cref{embedding normally hyp} with $ R= 2$ and $r=\infty$, there are $j\in \Emb^\infty( \Sigma\times P^1(N), M)$ and $C^\infty$ self-map $f$ of $M$  (which is a diffeomorphism if $n\ge 3$) which is $R$-normally expanding (or hyperbolic if $n\ge 3$) at $\cal L= j(\Sigma\times P^1(\R))$ and such that $g=j^{-1}\circ f\circ j$ is the canonical map of $\End^\infty_\sigma(\Sigma\times P^1(\R))$ associated to $(g^\sA)_{\sa\in \sA}$ (see \cref{canonical map}).

The idea is to apply a similar perturbation scheme  as for \cref{coroImportantgroup}.  This leads us to unfold $f$ into a 2-parameter family. 
Let $W$ be a neighborhood of $0\in \R^2$ and for $p=(u,v)\in W$, let:
\[g^\sb_p:= x\mapsto g^\sb(x-v)+v\quad \forall \sb\in \sB_0^\lambda \qand g^\sc_{p}:= x\mapsto \exp(u)\cdot  g^\sc(x)\; .\] 
Note that $0$ is a fixed point of $g^\sc_{p}$ with Lyapunov exponent $  \log \frac 32+u$. It is repulsive for  $W$ is small. 
 
Again by \cref{embedding normally hyp}, there is a $C^\infty$-family $(f_p)_{p\in W}$ of self-maps such that $f_0=f$ and such that $f_p$ leaves invariant $\cal L$ and satisfies $g_p=j^{-1}\circ f_p\circ j$ is the canonical endomorphism of $\Sigma\times P^1(\R)$ associated to $((g^\sa_p)_{p\in W})_{\sa\in \sA}$.

\medskip

Note that given any  $C^R$-neighborhood $\cal M_W$ of $(f_p)_p$, the set  $\cal M= \{\tilde f_0: (\tilde f_p)_p\in\cal M_W\}$ is a neighborhood of $f$. Thus  it suffices to show that for  $\cal M_W$ sufficiently small,  for every $(\tilde f_p)_p\in \cal M_W$, there is an arbitrarily small parameter $p$ such that $\tilde f_p$ has a periodic fiber at which it is   parabolic. Indeed this defines a set of self-maps of $M$ whose closure   contains $\cal M$. 

Let $(\tilde j_p)_p$ be the family of embeddings given by \cref{coroHPS2} and let $\tilde g_p:= \tilde j_p^{-1}\circ \tilde f_p \circ \tilde j_p\in \End^R_\sigma(\Sigma\times P^1(\R))$. Recall that $(\tilde g_p)_p$ is  $C^R$-close to $(g_p)_p$ when $\cal M_W$ is small and $\tilde g_p$  is of the form:
\[\tilde g_p(\ss, x)= (\sigma(\ss), \tilde g^\ss_p(x))\; ,\]

Let us simplify the setting:
\begin{fact}\label{simple setting}Up to a  reparametrization of $\Sigma\times P^1(\R)$ depending $C^2$  on $p$,  we can assume that for every $\ss\in \Sigma$ whose $0$ coordinate is $\sc$, it holds $g^\ss_p(0)=0$ for every $p$ small. 
\end{fact}
\begin{proof} 
We define the operator $(x^\ss_p)_{\ss\in \Sigma}\mapsto (\tilde x^\ss_p)_{\ss\in \Sigma}$ where 
$\tilde x^\ss_p= 0$  if the first letter of $\ss$ is not $\sc$ and 
$\tilde x^\ss_p= (\tilde g_p^\ss)^{-1} (x_p^{\sigma(\ss)})$ otherwise. It is a contracting operator at the neighborhood of $(0)_{\ss\in \Sigma}$. Let 
 $(\tilde O^\ss_p)_{\ss\in \Sigma}$ be its fixed point. It depends $C^2$ on $p$ since the operator does. Also when $\tilde g=g$, we have  
 $(\tilde O^\ss_p)_{\ss\in \Sigma}= (0)_{\ss\in \Sigma}$ for every $p$, thus   $(\tilde O^\ss_p)_{\ss\in \Sigma}$ is $C^2$-small. 
Thus using a coordinate change via a translation by $\tilde O^\ss_p$, we obtain the sought property.\end{proof}

The idea is to apply a variation of the proof of \cref{Importantclaimgroup}.  This involves  
 the space $\sP_m$ of  $2m$-periodic points $\ss\in \Sigma$ of the form $(\sb_m \cdot \sc^m)^\infty$: their letters at the positions $0, ..., m-1$ form the word $\sb_m\in (\sB_0^\lambda)^m$, the $m$ next letters are $\sc$. We will fix  $m\ge 1$ and $\ss\in \sP_m$ in function of the perturbation $(\tilde g_p)_p$ of $(g_p)_p$, and more precisely the properties of the following maps:
\begin{equation}\label{rceil rfloor0}
    \tilde g^{\sb_m }_p:=\tilde g^{\ss^{m-1}}_p\circ \cdots \circ \tilde g^{\ss^0}_p\qand \tilde g^{\sc^m}_p:= \tilde g^{\ss^{2m-1}}_p\circ \cdots \circ \tilde g^{\ss^{m}}_p,\quad \text{with  }\ss^j:= \sigma^{j}(\ss)\text{ for every }j\ge 0\, .\end{equation}
Note that when $\tilde g= g$, it holds $\tilde g^{\sb_m }_p:= x\mapsto g^{\sb_m}(x-v)+v$ where $g^{\sb_m}$ is the element of the group spanned by $(g^\sb)_{\sb\in \sB_0^\lambda}$
and $\tilde g^{\sc^m}_p=(g^{\sc}_p)^m$ for every $p\in W$.

Hence as $W$ is small, $\tilde g^{\sb_m }_p$ is a composition of contractions of $[-2,2]$ into its interior. Let 
$\tilde x_p(\sb_m ):= \tilde g^{\sb_m }_p(0)$.  On the other hand, by Fact \ref{simple setting}, the map  $(\tilde g_ w^{\sc^m})$ fixes the point $0$. Let $\tilde \lambda_p(\sb_m )$ and $\tilde \lambda_p(\sc^m)$ be the Lyapunov exponents of these fixed points:
\begin{equation}\label{def lambda cm} \tilde \lambda_p(\sb_m )= \frac1m \log |D_{0}\tilde g^{\sb_m }_p|\qand 
\tilde \lambda_p(\sc^m)=\frac1m \log |D_{0}\tilde g^{\sc^m}_p|\end{equation}
 Here is the generalization of \cref{Importantclaimgroup}:
\begin{proposition}\label{Importantclaimskewproduct}
For $\cal M_W$ small enough, for every $(\tilde f_p)_p\in \cal M_W$, if there exist $(\sb_m \cdot \sc^m)^\infty\in \sP_m$ with $m$ large and $p\in W$ small  such that: 
\[\tilde x_p(\sb_m )=0 \qand  \tilde \lambda_p(\sc^m )+\tilde \lambda_p(\sb_m )=0,\]
then  the map $ \breve h:= \tilde g_p^{\sc^m}\circ \tilde g_p^{\sb_m } $ is parabolic. 
 \end{proposition}

The proof is very similar to \cref{Importantclaimgroup} and done below.  Hence by  \cref{Importantclaimskewproduct,Densiteparabolic},   Theorem \ref{theoA} is a consequence of the next Claim.\end{proof}
\begin{claim} \label{le claim a prouver0}
For $\cal M_W$ $C^R$-small enough, for every $(f_p)_p\in \cal M_W$, there exist 
$p\in W$ small and $(\sb_m \cdot \sc^m)^\infty\in \sP_m$ with $m$ large  such that: 
\[\tilde x_p(\sb_m )=0 \qand  \tilde \lambda_p(\sc^m)+\tilde \lambda_p(\sb_m )=0.\]
\end{claim}
\begin{proof}[Proof of the Claim] 
First let us show that we can restrict our proof to the case where $\I=\N$. Indeed if  $\I=\Z$, by \cref{to monoinfinite shift}, for every $(\tilde f_p)\in \cal M_W$, for every $\ss\in \Sigma$, there exists a $C^{R-1}$-family  $(\mathsf {hol}^\ss_p)_p$ depending continuous on $\ss\in \Sigma$ such that:
\[\mathsf {hol}^{\sigma(\ss)}_p\circ \tilde g^\ss_p=\tilde g^{\iota \circ \rho (\ss)}_p\circ \mathsf {hol}^\ss_p\, ,\] 
with $\rho: \sA^\Z\to \sA^\N$ is the canonical retraction and  $\iota: \sA^\N\to \sA^\Z $ is the section which sends $(\sa_i)_{i\ge 0}$ to  $(\sa_i)_{i\in \Z}$ with $\sa_j=\sc$ for every $j<0$.  
As $(\tilde g^{\iota \circ \rho (\ss)}_p)_p$ is $C^{R}$ close to $(g^{\iota \circ \rho (\ss)}_p)_p=(g^{\ss}_p)_p$, the family $(\tilde g_p)_p$ is $C^{R-1}$-conjugated to
a family of endomorphisms of $\sA^\N\times P^1(\R)$ which is $C^R$-close to the canonical family of endomorphisms associated to $(g^\sa_p)_p$. Note that this is a $C^R$ bound although the conjugacy is $C^{R-1}$. As the conclusion of  the Claim is   invariant by $C^1$-intrinsic conjugacy and $R= 2$,   it suffices to show the following:
\begin{claim}\label{le claim a prouver}
Let $\Sigma= \sA^\N$. For every $(\tilde g_p)_p$ in a $C^R$-small neighborhood $\cal N_W$ of $( g_p)_p$, there exist 
$p\in W$ arbitrarily  small and $(\sb_m \cdot \sc^m)^\infty\in \sP_m$ with $m$  large such that: 
\[\tilde x_p(\sb_m )=0 \qand  \tilde \lambda_p(\sc^m)+\tilde \lambda_p(\sb_m)=0.\]
\end{claim}

Let $\sc^\infty=\sc\sc \sc \cdots   \in \Sigma$ be the $\sigma$-fixed point whose $\sA$-spelling has only the letter $\sc$.
Given  $(\tilde g_p)_p\in \cal N_W$, by Fact \ref{simple setting}, the point $0$ is fixed and repulsive for $\tilde g^{\sc^\infty}_p$;  let $\tilde \lambda_p(\sc^\infty):=
\log| D_0 g_p^{\sc^\infty} |$. 
\begin{fact}   When $\cal N_W$ is small,  
 $(\tilde \lambda_p(\sc^\infty))_p$ is $C^{R-1}$-close to  $( \log\frac32 +u)_{p=(u,v)}$.\end{fact}


Let $V_\Lambda\subset \Sigma\times P^1(\R)$ be  formed by the points  $(\ss,x)\in \Sigma\times (-2,2)$ such that the  $0$-coordinate of $\ss$ is in $\sB_0^\lambda$. By \cref{skewprod lambda blend}, the $g$-maximal invariant set of $V_\Lambda$ is a $\lambda$-blender $\Lambda$, and the Lyapunov fibration of $ W^u_{loc} (\Lambda) = \Sigma\times [-1,1] $ contains $(c^\infty, 0  , \log 2/3)$ in its interior.  Thus for $\cal N_W$ sufficiently small, for every  $(\tilde g_p)_p \in \cal N_W$,  there is a $\tilde g_0$-preorbit $\arr v=(v_j)_{j\le 0} \in \arr V_\lambda (\tilde g)$ 
 such that:
\begin{equation} \label{application blender prop} v_0 = (c^\infty, 0)
\qand 
\lambda_0(\tilde g_0, \arr v)+ \tilde \lambda_0 (\sc^\infty)=0\; .\end{equation}
The $\Sigma$-coordinate of $v_m$ is of the form 
$(\sa_{-m},..., \sa_{-1}, \sc, ..., \sc, ...)\in \Sigma$, with  $\sb_m:=\sa_{-m}\cdots \sa_{-1}\in \sB_0^\lambda)^m$. Fix $(\sb_m\cdot \sc^m)^\infty  \in \cal P_m$ for a large $m$.
\begin{lemma}\label{suite choisie}
It holds:
\[  \lim_{m\to \infty} \tilde x_0(\sb_m)=0
\qand \lim_{m\to \infty} -\tilde \lambda_0(\sb_m)=
\tilde \lambda_0(\sc^\infty)=\lim_{m\to \infty} \tilde \lambda_0(\sc^m )\; .\]
\end{lemma}
\begin{proof} 
Most of maps in the composition 
 $\tilde g_0^{\sc\cdot 
(\sb_m\cdot \sc^m)^\infty}\circ \cdots \circ \tilde g_0^{\sc^m \cdot 
(\sb_m\cdot \sc^m)^\infty}$ are close to $\tilde g_0^{\sc ^\infty}$ when $m$ is large, so are the derivatives at $0$. In view of \cref{rceil rfloor0,def lambda cm}, this proves: 
\[\lim_{m\to \infty} \tilde \lambda_0(\sc^m )=\tilde \lambda_0(\sc^\infty)\, .\] 
The map $\tilde g_0^{\sb_m} $ is the composition  $\tilde g_0^{\sb_1\cdot 
(\sc^m\cdot \sb_m)^\infty}\circ \cdots \circ \tilde g_0^{\sb_m\cdot 
(\sc^m\cdot \sb_m)^\infty}$, with $\sb_j:= \sa_{-j}\cdots \sa_{-1}$. 
Each of the maps $\tilde g_0^{\sb_j\cdot 
(\sc^m\cdot \sb_m)^\infty}$ is a contraction of $[-2,2]$  which is uniformly close to $\tilde g_0^{\sb_j\cdot 
\sc ^\infty}$ when $m$ is large. 
Thus the iterates of  $0$ under these respective maps are uniformly close when $m$ is large. This proves that 
$\tilde x_0(\sb_m)$ is close to  $\tilde g_0^{\sb_1\cdot \sc^ \infty} \circ \cdots \circ \tilde g_0^{\sb_m\cdot 
 \sc^\infty} (0)$ when $m$ is large. Also as $\tilde g_0^{\sb_1\cdot \sc^ \infty} \circ \cdots \circ g_0^{\sb_m\cdot 
 \sc^\infty} $ is a large contraction of $(-2,2)$ 
which sends  the  $x$-coordinate $x_m\in  (-2,2)$ of $v_{m}$ to the 
 to the $x$-coordinate of $v_0$ which is $0$  by \cref{application blender prop}. So $\lim_{m\to \infty} \tilde x_0(\sb_m)=0$ by \cref{application blender prop}.
 Note that this argument implies that most of the iterates of $x_m$ and $0$  under $\tilde g_0^{\sb_1\cdot \sc^ \infty} \circ \cdots \circ \tilde g_0^{ \sb_m\cdot 
 \sc^\infty}$ are close, and these are  close to the iterates of $0$ under $\tilde g_0^{\sb_1\cdot 
(\sc^m\cdot \sb_m)^\infty}\circ \cdots \circ \tilde g_0^{\sb_m\cdot 
(\sc^m\cdot \sb_m)^\infty}$. So are the derivatives at these points. This proves  $\lim_{m\to \infty} \tilde \lambda_0(\sb_m)=\lambda_0(\tilde g_0, \arr v)$ which is equal to $-\tilde \lambda_0(\sc^\infty)$ by \cref{application blender prop}.
\end{proof}
The later  statement and proof  were done at the parameter $p=0$ but can be done at any $p\in W$. We also obtain the convergence of the  parametric jets  for the same reasons. This gives:
\begin{lemma}\label{suite choisie para}   For every $p_0\in W$,  with $\tilde G^m= (\tilde G_p^m)_p$ and $\tilde G_p^m:=\tilde g_p^{\sb_1\cdot \sc^ \infty}\circ \cdots \circ \tilde g_p^{\sb_m\cdot \sc^ \infty}  $, it holds:
\[   J^R_{p_0}(\tilde x_p(\sb_m))_p\underset{m\to \infty}{\approx} J^R_{p_0}\tilde G ^m(0) 
\; .\]
\[ J^{R-1}_{p_0}\tilde \lambda_0(\sb_m)\underset{m\to \infty}{\approx} J^{R-1}_{p_0}\frac1m \log |D_0\tilde G ^m|
\qand 
J^{R-1}_{p_0}\tilde \lambda(\sc^\infty)=\lim_{m\to \infty} J^{R-1}_{p_0}\tilde \lambda(\sc^m )\; .\]
\end{lemma}
  
We recall that $R=2$. 
Let $\underline \ss= (\sb_j\cdot \sc^ \infty)_{j<0}\in \arr \Sigma'$. 
By definition $J^R_{p_0}  \tilde X (\underline \ss) $ and $\lambda(\tilde g, J^R_{p_0} \tilde X(\underline \ss) )$ are the limit when $m\to \infty$ of  
$J ^R_{p_0} \tilde G^m(0) $ and $  J^{R-1}_{p_0}  \frac1m \log |D_0\tilde G^m| $. Hence by the latter lemma, the derivative 
$ \partial_p (\tilde x_p(\sb_m))_p$ is close to  $\partial_p  G ^m(0)$
and $\partial_p \tilde \lambda_0(\sb_m)$ is close to $\partial_p \lambda(\tilde g, J^R_{p_0}  X(\underline \ss))$. 
As   $G ^m_{p}(0)= \lim_{j\to \infty} g^{\sb_j \cdots \sb_{-1}}(-v)+v=\lim_{j\to \infty} g^{\sb_j \cdots \sb_{-1}}(0)+v$, its derivative is close to the second coordinate projection $p=(u,v)\mapsto v$. Also  $\partial_p \lambda(\tilde g, J^R_{p_0} X(\underline \ss))$ is zero. Thus 
$\partial_p  ( \tilde \lambda(\sb_m) , \tilde x(\sb_m) )$ is close to the first coordinate projection. 
%
On the other hand, 
$\partial_p \tilde \lambda(\sc^m) $ is close to $\partial_p \tilde \lambda(\sc^\infty) $ which is close to $\partial_p   \lambda(\sc^\infty)=(u,v)\mapsto u $. 
Thus the derivative of the  following function is  close to the identity: 
\[\Upsilon_m : p=(u,v)\in W \mapsto  (\tilde \lambda_p(\sc^m)+\tilde \lambda_p(\sb_m) , \tilde x_p(\sb_m) )\; .\]

By \cref{suite choisie},   for $m$ large, the point $\Upsilon_m(0)$ is small.  Thus by the local inversion theorem, for $m$ large enough, there exists $p$ small such that  $\Upsilon_m (p)=0$. This proves Claim \ref{le claim a prouver}.
 \end{proof} 
\begin{proof}[Proof of \cref{Importantclaimskewproduct}] First note that when $(\tilde g_p)_p$ is $C^2$-close to $(g_p)_p$ and $p\in W$ is small, then $\tilde g:= \tilde g_p$ is $C^2$-close to $g=g_0$. 
  We recall that $g^\sc$ is conjugate to $x\mapsto \frac32 x$ via $h(x)= x/(2x+1)$.
For the same reasons as for \cref{Importantclaimgroup}, the following counterpart of Claim \ref{Importantclaimgroup1} implies  \cref{Importantclaimskewproduct}:
\begin{claim} Given $\tilde g$ $C^2$-close to $g$ and $\ss\in \cal P_n$ with $n$ large, if  $ \tilde g ^{\sb_n }(0)=0$ and the fixed point  $0$  is parabolic for $ \breve h:=\tilde g ^{\sc^m}\circ  \tilde g ^{\sb_n }$, then the restrictions to $[-3/2, 3/2]$ of  $ \breve h$ and $h$ are $C^2$-close. 
\end{claim}
The following is the counterpart of Sternberg's \cref{Sternerberg}: 
\begin{lemma}\label{lemm1finalClaim}
There exist  $\tilde  h_1, \tilde h_2\in C^2([-2,2],\R)$ which are $C^2$-close to $h|[-2,2]$ and such that
 $\tilde  h(0)=0$, $D_0\tilde  h (0)=1$ and 
\[ \tilde  h_1=   \tilde g ^{\sc^m}\circ   \tilde h_2  \circ (D_0\tilde g^{\sc^m})^{-1}|[-2,2]\; .\] 
\end{lemma}
We show the latter lemma below. On the other hand the proof of the next lemma is skipped since it is the  same as for \cref{lemm2finalClaimgroup}.
\begin{lemma}\label{lemm2finalClaim}
The map $\Phi=
D_0\tilde g^{\sc^m}\circ  \tilde h^{-1}_2\circ 
\tilde g ^{\sb_m }|[-2,2]$ is $C^{2}$-close to the identity. \end{lemma}
By the latter Lemma, the image by $\Phi$ of $[-5/3, 5/3]$ is included in $[-2 ,2 ]$. Thus we can use Lemma \ref{lemm1finalClaim} to obtain that the map $\breve h|[-3/2,3/2]$ is equal to the composition of $\tilde h_1$ which is $C^2$-close to $h$  with a map $\Phi$ $C^2$-close to the identity. This proves the claim.  
\end{proof}
\begin{proof}[Proof of Lemma \ref{lemm1finalClaim}]
Let $B$ be the set of continuous families $(h^{\ss})_{\ss\in \Sigma}$  of $C^2$-diffeomorphisms $h^{\ss}$ of $[-2,2]$ into $\R$ such that 
$h^{\ss}(0)=0$ and $D_0h^{\ss}=1$. 
We notice that $B$ endowed with $C^2$-distance is complete. 
We consider the operator:
\[\Psi\colon H=(h^{\ss})_{\ss\in \Sigma} \in B \mapsto 
(\Psi(H)^\ss)_{\ss}\in B,\quad \text{with}
\;  \Psi(H)^{\ss} =
\tilde g^{\sc\cdot  \ss }\circ h^{\sc\cdot  \ss }\circ (D_0\tilde g^{\sc\cdot  \ss } )^{-1}
\, ,\]
where $\{\sc\}\times \sA^\N$ and  $\sB_0^\lambda\times \sA^\N$  are the subsets of $\sA^\N$ formed by points  whose first letters are  in respectively $\{\sc\}$ and   $\sB_0^\lambda$.
The map $\Psi$ has a contracting iterate;  let $(\tilde h^\ss)_\ss$   be its fixed point (which depends continuously on $\tilde g $). It satisfies:
\[\tilde h^\ss= 
\tilde g^{\sc\cdot  \ss }\circ \tilde h^{\sc\cdot  \ss }\circ (D_0\tilde g^{\sc\cdot  \ss } )^{-1}=
\tilde g^{\sc\cdot  \ss }\circ\cdots \circ \tilde g^{\sc^m\cdot  \ss }\circ \tilde  h^{\sc^m\cdot  \ss }\circ (D_0\tilde g^{\sc\cdot  \ss }\circ\cdots \circ D \tilde g^{\sc^m\cdot  \ss } )^{-1}\; .
\]
Thus with $\ss= ( \sb_m\cdot \sc^m)^\infty$ and $\tilde g^{\sc^m}$ defined in \cref{rceil rfloor0} \cpageref{rceil rfloor0}, we obtain:
\[\tilde h^{( \sb_m\cdot \sc^m)^\infty}= 
\tilde g^{\sc^m}\circ \tilde h^{( \sc^m\cdot \sb_m)^\infty}\circ (D_0\tilde g^{\sc^m}  )^{-1}\]
We conclude by putting $\tilde h_1=\tilde h^{( \sb_m\cdot \sc^m)^\infty}$ and $\tilde h_2=\tilde h^{( \sc^m\cdot \sb_m)^\infty}$.
\end{proof}

Let us assume that $M$ is an analytic manifold of dimension $\ge 2$. Let us choose a complex extension  $\tilde M$ of $M$. 
For $\epsilon>0$, let   $\tilde M_\epsilon$ be the $\epsilon$-neighborhood of $M$ in $\tilde M$. Let $C^\omega_\epsilon(M,M)$ be the space of analytic maps of $M$ in $M$ whose analytic extension is well defined on from $\tilde M_\epsilon$ into $\tilde M$ and $C^0$-bounded. This space is endowed with the uniform $C^0$-norm. The following is a consequence of the latter proof:
\begin{corollary}\label{analytic}
For $\epsilon>0$ sufficiently small, there are locally dense sets
 $D_p$ and $D_r$ in $C^\omega_\epsilon(M,M)$ formed by maps which display a normally hyperbolic smooth circle on which they act respectively as parabolic maps and Diophantine smooth rotations.  Moreover $D_p$ and $D_r$ are formed by diffeomorphisms if $n\ge 3$. 
\end{corollary}
\begin{proof}
The set of $2$-parameter $C^\omega_\epsilon$-families intersected with $\cal M_W$ is an open set $\cal M_{W, \epsilon}^\omega$. Also the set 
 $\cal M_{\epsilon}^\omega= \{\tilde f_0: (\tilde f_p)_p\in \cal M_{W, \epsilon}^\omega\}$ is open. For every $\tilde f_0\in \cal M_{\epsilon}^\omega$, by  \cref{Importantclaimskewproduct} and Claim \ref{le claim a prouver0}, there exist $p$ small and  $m\ge 1$  such that $\tilde f_p^{2m}$ restricted to the   fiber of $(\sb_m, \sc^m)^\infty$ is a  parabolic map $\breve h$. 
This implies the  density in $\cal M_{\epsilon}^\omega$ of analytic dynamics leaving invariant normally hyperbolic, periodic circles at which they are parabolic.
 
Now take  $p'=p+(0,v')$ with $v'>0$ small. Then the restriction $\breve h_{v'}$ of  $\tilde f^{2m}_{p'}$ restricted to the fiber of $(\sb_m, \sc^m)^\infty$ 
satisfies $x\le \breve h(x)< \breve h_{v'}(x)$ for every $x\in P^1(\R)$, so  $\breve h_{v'}$ has none  fixed points and so its rotation number differs to the one of $\breve h$ which is $0$. Thus by continuity of the rotation number, there exists $v'$ small such that the rotation number of  $\breve h_{v'}$  is Diophantine. This implies the  density in $\cal M_{\epsilon}^\omega$ of analytic dynamics leaving invariant normally hyperbolic, periodic circles at which they are Diophantine rotation.
\end{proof}
Then a proof of the following problem would imply the local density of fast growth of the number of periodic points among analytic maps (for the inductive topology) by using \cref{analytic}  and then Grauert and Cartan B theorems.
\begin{problem}\label{analytic2} Let $f\in C^\omega (M,M)$  and let $C$ be a $C^\infty$-circle included in $M$ at which $f$ is  normally hyperbolic and such that the rotation number of $f|C$ is Diophantine. Show that $C$ is analytic. 
\end{problem}

\subsection{Theorem \ref{theoB}}\label{sec:Theorem  theoB}
The proof  of  this Theorem \ref{theoB} has a similar  structure as the one of Theorem \ref{theoA}, using   \cref{ThmS3,ThmS3coro} instead of \cref{Densiteparabolic} and  $C^r$-$\lambda$-parablenders instead of $\lambda$-blenders.
 However the perturbations will be more tricky to perform. 

\begin{proof}[Proof of Theorem \ref{theoB}]
Let us show  the existence of a locally dense subset $\hat D$ satisfying the assumptions of \cref{ThmS3,ThmS3coro}, and which is formed by families of diffeomorphisms when $n=\dim M\ge 3$. 

Let $\sB^\lambda_r$ and $(g^\sb_p)_{\sb \in \sB^\lambda_r}$ be as  in \cref{lambda parablender IFS} for every $p\in B_k$. Let $g_p^\sc: x\mapsto \frac32 \frac x{x+1}$ for every $p\in B_k$.   We consider  these  maps  as acting on the compactification $P^1(\R)$ of $\R$. Let $\sA= \sB^\lambda_r\cup \{\sc\}$ for a symbol $\sc$.
  By \cref{embedding normally hyp} for any $\infty>r\ge 1$, there is a $C^\infty$-embedding  $j$ of   $\Sigma\times P^1(\R)$ into the manifold $P^1(\R)\times (-1,1)^{n-1}\subset  M$ and a $C^\infty$-family $(f_p)_{p\in B_k}$ of $C^\infty$-self-maps $f_p$ of $M$  (which are diffeomorphisms if $n\ge 3$) which are $(r+2)$-normally expanding  (or hyperbolic if $n\ge 3$) at $\cal L:= j (\Sigma\times P^1(\R))$ and such that $g_p=j ^{-1}\circ f_p\circ j$ is the canonical map of $\End^{\infty}_\sigma (\Sigma\times P^1(\R))$ associated to $(g^\sA_p)_{\sa\in \sA}$.

Let $\hat {\cal M}$ be a small $C^r$-open neighborhood of $(f_{p})_{p}$ intersected with the subset of $C^\infty$-families. Given $(\tilde f_{p})_p\in \hat {\cal M}$, let $(\tilde j_{p})_{p}$ be the family of embeddings given by \cref{coroHPS2} and let $\tilde g_{p}:= \tilde j_{p}^{-1}\circ \tilde f_{p} \circ \tilde j_{p}\in \End^r_\sigma (\Sigma\times P^1(\R))$ and $\tilde {\cal L}_p:= \tilde j_p (\Sigma\times P^1(\R))$.  The assumptions of \cref{ThmS3} are implied by the following shown below.
 
\begin{claim}\label{ParaImportantclaim} There is a neighborhood $B_k'$ of $0\in B_k$ such that for any  $\eta>0$ and $(\mathring f_{p})_{p} \in \hat {\cal M}$,  
there is $\eta'>0$ satisfying the following property. For every $p_0\in B'_k$, there exists $m\ge 1$,  a $2m$-periodic $\ss\in \Sigma$  
and   $(\tilde  f_{p})_{p}\in \hat {\cal M}$ which is  $C^r$-$\eta$-close to  $(\mathring f_{p})_{p}$, satisfies  $\tilde  f_{p}=\mathring  f_{p}$ for every   $p\notin ([-\eta',\eta']^k+ p_0) $,
  and such that for every $p\in  [-\frac23\eta', \frac23\eta']^k+ p_0$, the restriction  $\tilde  f_p^{2m}| \tilde j_p(\{\ss\}\times P^1(\R)) $ displays a parabolic point depending continuously on $p$ and if $r\ge 2$ this circle diffeomoprhism is parabolic.
\end{claim}

For every  $\u n_0\in \{0,1\}^k$ and  $\eta>0$, 
let $\eta'>0$ and let $(\tilde f_p)_p$ be the family whose restriction to $[-\eta',\eta']^k+ p_0$, for $p_0$ in the finite set $\{\eta'\u  n_0 + 2\eta' \u n : \u n\in  \Z^k\}\cap B_k'$, is given by the above claim applied to $(\mathring f_p)_p$. Note that $(\tilde f_p)_p$ is of class $C^r$ and  $\eta$-distant from $(\mathring f_p)_p$. 
Furthermore it satisfies:

\begin{enumerate} \item[$\mathcal P(\u n_0)$]
For every $\u n\in \Z^k$,  there exists a $2m$-periodic point $\ss(\u n)\in \Sigma$ such that for $p\in \{\eta' (\u  n_0 + 2\u n) +(- \frac23 \eta', \frac23 \eta')^k\}\cap B_k'$,  the restriction of $\tilde f^{2m}_p$ to the fiber $ \tilde j_p(\{\ss(\u n)\}\times P^1(\R)) $   displays a parabolic point depending continuously on $p$ and if $r\ge 2$ this circle diffeomorphism is parabolic. \end{enumerate} 

Now assume we embedded $2^k=\Card \{0,1\}^k$ such normally hyberbolic fibrations at different places of $M$   and label each by distinct  numbers $\u n_0\in \{0,1\}^k$.  Then we apply  Claim \ref{ParaImportantclaim} to each of these fibrations.   This provides $2^k$-different perturbations at uniformly distant subsets of $M$. Thus there is a  $C^r$-perturbation of the family of dynamics on the whole phase space whose restriction to the normally hyperbolic fibration indexed by $\u n_0$ satisfies $\mathcal P(\u n_0)$,  for every $\u n_0\in \{0,1\}^k$. As:
\[ \bigcup_{\u n_0\in   \{0,1\}^k, \u n \in \Z^k}  \left\{\eta' (\u n_0 + 2\u n) +\left(- \frac23 \eta', \frac23 \eta'\right)^k\right\}= \bigcup_{\u n \in \Z^k} \left \{\eta' \u n +\left(- \frac23 \eta', \frac23 \eta'\right)^k\right\}=\R^k\; ,\] 
we have shown the existence of a $C^r$-locally dense set $\hat D_0$ of smooth  families $(\tilde f_p)_{p\in B_k'}$ satisfying the assumption of \cref{ThmS3} with $B_k'$ instead of $B_k$. To get $B_k$ instead of $B_k'$, we regard $\hat D:=\{(\tilde f_{\tau\cdot  p})_{p\in B_k}: (\tilde f_{p})_{p\in B_k}\in \hat  D_0\}$ with $\tau>0$  so that $B_k'\supset \tau\cdot B_k$. 
\end{proof} 

\begin{proof}[Proof of Claim \ref{ParaImportantclaim}]
First recall that we can assume that $M=  P^1(\R)\times (-1,1)^{n-1}$. Also:
\begin{fact} \label{new coord}
Up to a $C^{r+1}$-family of coordinates change close to the identity, we can assume that for any  $p\in B_k$,  $\tilde j_p( \Sigma\times \{0\})\subset \{0\}\times  (-1,1)^{n-1}$ and 
if the $0$ letter of $\ss \in \Sigma$ is $\sc$, the map $\tilde g_p^\ss$ fixes~$0$.  \end{fact}
\begin{proof} 
Let $R_p$ be the source of the fiber of $\tilde {\cal L}_p$ based at $\sc^\infty$. Using a coordinate change close to the identity we can assume that it is in 
$\{0\}\times (-1,1)^{n-1}$. 
 Also its strong unstable manifold $W^{uu}_{loc} (R_p, \tilde f_p)$ is of class $C^{\infty}$ and depends $C^\infty$ on $p$.  By \cref{pour variete Wssuu}, we can assume it one-dimensional and that the local  strong stable manifold of its points form a $C^{r+1}$  submanifold $N_p$ which depends $C^{r+1}$ on the parameter $p$. This family of submanifolds is $C^{r}$ close to the constant family $\{0\}\times U$ for   an open subset $U$  of $(-1,1)^d$ and intersects transversally each fiber of $\tilde {\cal L_p}$ at a point close to $0$. Moreover it is locally invariant:
  $N_p\cap \tilde f_p^{-1}(N_p)
$ is an open set of $N_p$.

We achieve the proof using a coordinate change $C^{r+1}$-close to the identity of $M$ sending  $N_p$ into  $\{0\}\times  (-1,1)^{n-1}$ and changing  the coordinate of the fibers so that $N_p$ intersects them at $0$.
\end{proof}

Similarly to the proof of  Theorem \ref{theoA}, let $\sP_m$ be the set of  $2m$-periodic points of $\Sigma$ whose $\sA$-spelling is of the form $\ss=(\sb_m \cdot \sc^m)^\infty$: its  letters at the positions $0, ..., m-1$ form a word $\sb_m\in (\sB_r^\lambda)^m$ and its $m$ next letters are $\sc$.  Then we denote:
\begin{equation}\label{rceil rfloor}
    \tilde g^{\sb_m }_{p}:=\tilde g^{\ss^{m-1}}_{p}\circ \cdots \circ \tilde g^{\ss^0}_{p}\qand \tilde g^{\sc^m}_{p}:= \tilde g^{\ss^{2m-1}}_{p}\circ \cdots \circ \tilde g^{\ss^{m}}_{p},\quad \text{with  }\ss^j:= \sigma^{j}(\ss)\text{ for every }j\ge 0\, .\end{equation}
Note that when $(\tilde g_{p})_{p}= (g_{p})_{p}$, it holds 
$  \tilde g^{\sc^m}_{p} =(g^{\sc}_p)^m$ for every $p\in B_k$ and $\tilde g^{\sb_m }_{p}= g^{\sb_m}_p$ where $g^{\sb_m}_p$ is the element of the semi-group spanned by $(g_p^\sb)_{\sb\in \sB_r^\lambda}$. 
Using Fact \ref{new coord}, we have $  \tilde g^{\sc^m}_{p}(0)=0$. Let:
\[\tilde x_{p}(\sb_m )=  \tilde g^{\sb^m}_{p}(0)\; ,\quad \tilde \lambda_{p}(\sb_m )= \frac1m \log |D_{0}\tilde g^{\sb_m }_{p}|\qand 
\tilde \lambda_{p}(\sc^m)=\frac1m \log |D_{0}\tilde g^{\sc^m}_{p}|\; .\]
We notice that $\tilde x_{p}(\sb_m )$ belongs to $(-2,2)$, also $\tilde \lambda_{p}(\sc^m)\approx \log \frac32$ and $ \tilde \lambda_{p}(\sb_m )<0$.  Note that if 
$\tilde x_{p}(\sb_m )=0$, then it is the unique point of  $\tilde g^{\sb^m}_{p}$ in $(-2,2)$. 
The proof of the following is the same as the one of   \cref{Importantclaimskewproduct}:
\begin{proposition}\label{Importantclaimskewproductpara}
Let $r\ge 2$. For $\hat {\cal M}$ and $B_k'$ small enough, for every $(\sb_m \cdot \sc^m)^\infty\in \sP_m$ with $m$ large, for every $(\tilde f_{p})_{p}\in \hat {\cal M}$ and $p\in B'_k$, the map $ \breve h_p:= \tilde g_{p}^{\sc^m}\circ \tilde g_{p}^{\sb_m } $
  is parabolic if: 
\[ \tilde x_{p}(\sb_m )=0 \qand   \tilde \lambda_{p}(\sc^m )+ \tilde \lambda_{p}(\sb_m )=0\, .\]
 \end{proposition}
Hence,  Claim \ref{ParaImportantclaim}   is a consequence of the next Claim for every $r\ge 1$.
\end{proof}
\begin{claim}\label{ParaImportantclaim2} For $\hat {\cal M}$ and $B_k'$ small enough,
for all $\eta>0$ and $(\mathring f_{p})_{p} \in \hat {\cal M}$,  
there exists $\eta'>0$ s.t.:

For every $p_0\in B'_k$, there exists  $(\tilde  f_{p})_{p}\in \hat {\cal M}$ which is  $C^r$-$\eta$-close to  $(\mathring f_{p})_{p}$, satisfies  
$\tilde  f_{p}=\mathring  f_{p}$ for every   $p\notin ([-\eta',\eta']^k+ p_0) $, and there exists  $(\sb_m \cdot \sc^m)^\infty\in \sP_m$ with $m$ large at which: 
\[ \tilde x_{p}(\sb_m )=0 \qand  \tilde \lambda_{p}(\sc^m)+\tilde \lambda_{p}(\sb_m )=0\quad \forall p\in  \left[-\frac23\eta', \frac23\eta'\right]^k+ p_0.\]
\end{claim}
 \begin{proof} The following applies  the $C^r$-$\lambda$-parablender property; it assumes  $\hat {\cal M}$ and $B_k'$ small enough:
\begin{lemma}\label{approx jet}  For every $p_0\in B_k'$ and 
$(\tilde f_p)_p\in \hat {\cal M}$, 
when $m$ is large, there exists  $ (\sb_m\cdot \sc^m)^\infty \in \sP_m$ such that  $J^r(\tilde x_{p}(\sb_m ))_p$ is small
 and 
$J^{r-1} _{p_0}( \tilde \lambda_{p}(\sc^m))_p+ J^{r-1} _{p_0}( \tilde \lambda_{p}(\sb_m ))_p$ is small. 
\end{lemma}
\begin{proof} Assume that $\Sigma = \sA^\N$.  We recall that $\sc^\infty\in \Sigma$ denotes the point whose $\sA$ spelling is formed uniquely of the letter $\sc$. We recall that $0$ is the repulsive fixed point of $g^{\sc^\infty}_p$, let  $\tilde \lambda_p(\sc^\infty)$ be its Lyapunov exponent. For $\hat {\cal M}$ and $B_k'$ small enough, it holds that  $(-\tilde \lambda_p(\sc^\infty))_p$ is  $C^{r-1}$-close to  $(\log 2/3)_p$.  Then by  \cref{exem lambda parablender skewproduct}, the jet $(\sc^\infty, 0, -J_{p_0}^{r-1}(\tilde \lambda_p(\sc^\infty))_p)$  is included  in the Lyapunov fibration of the unstable set of the $\lambda$-$C^r$-parablender equal to the continuation of   $(\sB_r^\lambda)^\N\times [-1,1]$. Thus with $\arr \Sigma'$ the space of $\sigma$-preorbits  with $0$ coordinate in $\sB_r^\lambda$ (see  \cref{exem lambda parablender skewproduct}),  there is $\u \ss=(\ss_{-m})_{m\ge 1}\in \arr \Sigma'$ such that with $v_{-m} =(\ss_{-m}, 0)$:
\[ \lim_{m\to \infty}  J^r_{p_0}(g_{p}^{m}(v_{-m}))_p= (c^\infty, 0 ) 
\qand 
 \lim_{m\to \infty} 
J^{r-1}_{p_0} 
( \frac1{m} \log |\partial_x  g_{p}^{m}(v_{-m})  |)_p=-J^{r-1}_{p_0} ( \tilde \lambda_p (\sc^\infty))_p \; .\]  
Note that  each $\ss_{-m}$ is the concatenation of $\sb_m\in \sB_r^\lambda$ with $\sc^\infty$. 
By  \cref{suite choisie para} (whose statement and argument are still valid in the present setting),  this implies:
\[   \lim_{m\to \infty} J^r_{p_0}(\tilde x_p(\sb_m))_p\underset{m\to \infty}=0
  \qand    \lim_{m\to \infty} J^{r-1}_{p_0}(\tilde \lambda_p(\sc^m ))_p+J^{r-1}_{p_0}(\tilde \lambda_p(\sb_m))_p =0
  \; .\]
If $\Sigma =\sA^\Z$, we observe that the statement of the lemma is invariant by the $C^r$-conjugacy of $P^1(\R)\times \Sigma$ given by  \cref{to monoinfinite shift} which leaves invariant $0$ by  Fact \ref{new coord}. So   we can use the case  $\Sigma = \sA^\N$ to get the case  $\Sigma = \sA^\Z$ as we did in Claim \ref{le claim a prouver0}. 
\end{proof} 
Now we fix $m$ large and $\sb_m$ given by the previous lemma. 
 the idea is to perturb the dynamics so that $\tilde x_{p}(\sb_m )=0$ and  $ \tilde \lambda_{p}(\sc^m )+ \tilde \lambda_{p}(\sb_m )=0$. Contrarily to the proof of Claim \ref{le claim a prouver0}, 
 it is difficult to handle such a perturbation using  new parameters, since we have  only  $C^{r-1}$-bounds on the family $( \tilde \lambda_{p}(\sc^m )+ \tilde \lambda_{p}(\sb_m ))_p$. 
 So the perturbation  technique is going to be extrinsic. 

Recall that  $\ss:= (\sb_m\cdot  \sc^m)^\infty$ and  $ \ss^j := \sigma^j(\ss)$. Note that $\ss^m= (\sc^m\cdot  \sb_m)^\infty$. Let:
\[S_p:= \tilde j_p( \ss^m, 0) \qand    S'_p=\tilde j_p(\ss^m, \tilde x_p(\sb_m)) \; .\]
Observe that $\tilde f_p^{2m}(S_p)=S'_p$ and $J^r_{p_0} ( S'_p)_p$ is close to $J^r_{p_0} ( S_p)_p$. Indeed $\tilde f^m_p(S_p)=\tilde j_p( \ss, 0)$ and so $\tilde f^{2m}_p(S_p)=\tilde j_p( \ss^m, \tilde x_p(\sb_m))=S'_p$. As $J^r_{p_0} (\tilde x_p(\sb_m))_p$ is small, it comes that $J^r_{p_0} ( S'_p)_p$ is close to $J^r_{p_0} ( S_p)_p$. The following lemma makes a perturbation of $(\tilde f_p)_p$ so that $S'_p=S_p$ when $p$ is uniformly close to $p_0$. Then $S_p$ will be a $2m$-periodic point or equivalently $\tilde x_p(\sb_m)=0$ as sought by the first equality of   Claim \ref{ParaImportantclaim2}.
\begin{lemma}
There exists $\eta'>0$ which is independent of $p_0\in B_k'$ such that when $m$ is large, there is an $\eta/2$-$C^r$- perturbation of $(\tilde f_p)_p$ which is supported by $p\in p_0+ [-\eta', \eta']^k$ and such that for every $p\in p_0+[-\frac23 \eta', \frac23 \eta']^k$, the points $S'_p$ and $S_p$ coincide. In other words $\tilde x_p(\sb_m)=0$. Moreover the families  $(\tilde \lambda_p(\sc^m))_p$ and $(\tilde \lambda_p(\sb_m))_p$ are unchanged. 
\end{lemma}
\begin{proof} 
It suffices to notice that the $\tilde {\cal L_p}$-fiber of the point $\tilde  f_p^{2m-1}(S_p)$ is isolated w.r.t. to the fibers of the other points $(\tilde f_p^{k}(S_p))_{0\le k\le 2m-2}$. Indeed  the letters at the $0$ and $1$ position of $\ss^{m-1}$ are in respectively $\sB^\lambda_r$ and $\{\sc\}$ while it is not the case for the other $\ss^k$. 
Thus we can use a  translation along this fiber so that $S'_p$ is sent to $S_p$, and extends this translation at a neighborhood of this fiber  (using a $C^r$-tubular neighborhood).  Then using a bump function we perform  a local perturbation of $(\tilde f_p)_p$ so that this perturbation is supported by the product of  a small neighborhood of $\tilde f_{p_0}^{2m-2}(S_{p_0})$ with $ p_0+ [-\eta', \eta']^k$, while
$\tilde  f_p^{2m-1}(S_p)$ is sent to $S_p$ for every $p\in p_0+[-\frac23 \eta', \frac23 \eta']^k$. Note that such perturbation does not changed the fiber of the orbit of $\ss$. Furthermore, as  $J^r_{p_0} ( S'_p)_p$ is close to $J^r_{p_0} ( S_p)_p$, the family of translation is $C^r$-small, and so is its product with the bump function when $\eta'$ is sufficiently small.  
 
 Actually $\eta'$ does not depend on $m$ large nor $p_0$. Indeed $(S_p)_{p\in B_k}$  and $(\tilde f_p^{m}(S_p))_{p\in B_k}$ belongs to the set of $C^r$-families $(\tilde j_p(\ss', 0))_{p\in B_k}$. These depend continuously among  $\ss'$ in the compact set $\Sigma$. So the modulus of continuity of their $r^{th}$ derivatives is uniform. The same occurs for the  set of families $(\tilde f_p^{j}(S_p))_{p\in B_k}$  among  $m\le j\le 2m $, which is  included in the set of fixed points of  a compact families of fiber contractions. Thus the moduli of continuity of the  $r^{th}$ derivatives of $(S_p)_p$ and $(S_p')_p$ are bounded independently of $p_0\in B_k'$ and  $m$  large. This gives the sought property.   \end{proof} 
Now it remains to perturb the dynamics to make the periodic point $S_p$ semi-parabolic (which is now equivalent to  $\tilde \lambda_{p}(\sc^m)+\tilde \lambda_{p}(\sb_m )=0$). There are two difficulties. First the perturbation should be done in many fibers since we have to perturb the \emph{average} central Lyapunov exponent: this must be done at many fibers.
Secondly this average Lyapunov exponent is only small in the $C^{r-1}$-topology while we have to handle a $C^r$-perturbation. However we have the following:
\begin{lemma}
There is a Lipschitz map which sends $C^{r-1}$ families  $\lambda=(\lambda_p)_{p\in B_k}$ of numbers $\lambda_p$  to $C^r$-families $(h^\lambda_p)_{p\in B_k}$ of self-maps $h^\lambda_p$ of $P^1(\R)$ such that $h^0_p=identity$ and for every $\lambda$,  
\[ h^\lambda_p(0)=0\qand \log |D_0h^\lambda_p|=\lambda_p\; .\]
\end{lemma}
\begin{proof}  
Given a point $(0,p_1)\in \{0\}\times B_k$, the following is a  $C^r$-jet of a real function on  $\R \times B_k$:
\[J^r_{0,p_1} u(\lambda)(x,p)=  x\cdot \sum_{|j|\le r-1}\frac1{(j+1)!} \partial_p^j \exp \lambda_{p_1}\cdot  (p-p_1)^{j+1} -x.\]
By Whitney extension Theorem \cite[Thm 2.3.10]{Ho90}, there is a Lipshitz, linear  operator $u$  which sends $\lambda$ to  a function $u(\lambda)$ from  $P^1(\R)\times B_k$ into $\R$, such that  its $C^r$-jets at  $ \{0\}\times B_k$ matches with the above expression. Then using a bump function we can assume that $u(\lambda)$ is compactly supported. Then put $h_p^\lambda:  x\in P^1(\R)  \mapsto x+  u(\lambda)(x, p)\in P^1(\R)$.
\end{proof}
Recall that we assumed that $M=P^1(\R)\times (-1,1)^{n-1}$ and that the fibers of 
$\tilde {\cal L}_p$   are transverse to the fibers of $P^1(\R)\times (-1,1)^{n-1}\to P^1(\R)$. This does not change the limit of $J^{r-1}_{p_0}(\tilde \lambda_p(\sc^m ))_p+J^{r-1}_{p_0}(\tilde \lambda_p(\sb_m))_p$ when $m\to \infty$ to assume that the norm on the tangent space of $\tilde {\cal L}_p$ is induced by the first coordinate projection $P^1(\R)\times (-1,1)^{n-1}\to P^1(\R)$.  
 
Now we use the latter lemma, with $\lambda= (-2\tilde \lambda_p(\sb_m)-2 \tilde \lambda_p (\sc^m))_p$. This provides a family of maps $h_p^\lambda: P^1(\R) \to P^1(\R)$  fixing $0$ and having the jet defined by $\lambda$. 
Let $(H_p^\lambda)_p$ be a $C^r$-family of maps of $M$  which 
leaves invariant the trivial  fibration $P^1(\R)\times (-1,1)^{n-1}\to (-1,1)^{n-1}$, which is $C^r$-close to the identity, coincides with the identity nearby any fiber with  $0$-letter   in $\sB^\lambda_r$ and equal to $h^\lambda_p$ nearby any other fibers.  Then $S_p$  is still a $2m$ periodic point for $H_p^\lambda\circ \breve f_p$. Also  half of  points in  the orbit of $S_p$ have their derivatives along the fibration   multiplied  
by  $\exp(2\lambda_p)$ while the others are unchanged.    Thus $S_p$   is  semi-parabolic for $H_p^\lambda\circ \breve f_p$. 
Note that after this perturbation, the fibration  might have changed. However, the $2m$-periodic point $S_p$  shadows the normally hyperbolic,  $2m$-periodic    fiber of 
$\ss^m$ and so must lie in it. 
\end{proof}

%
%
%

\section{Perturbation of families of parabolic circle maps to constant rotations}\label{ProofthmS4}
This section is devoted to the proof of Theorem \ref{thmS4}.
Let $k\in \N$  and let $ B\subset \R^k$ be an open subset. Let   $(g_p)_{p\in B}$ be  a $C^\infty$-family of circle diffeomorphisms so that for every $p\in B$ the map $g_p$ is parabolic.
Given any $B'\Subset B$, we want to find a $C^\infty$-perturbation $(\tilde g_p)_{p\in B}$ of $(g_p)_{p\in B}$ such that $\tilde g_p$ displays a Diophantine rotation number which does not depend on $p\in B'$.

We shall work with the coordinates given by a one-point compactification $P^1(\R)$ of $\R$. Hence given $a<b\in \R$, we will denote by $(a,b)$ the segment of $\R\subset P^1(\R)$ and by  $(b,\infty,a)$ the arc of $P^1(\R)$ containing $\infty$ and with endpoints $\{a,b\}$. \medskip

\noindent {\bf Sketch of proof}: 
The proof of the theorem is done by the following steps:
\begin{enumerate}
\item First we show that in a  $C^\infty$-family of coordinates, we can assume  that for every $p\in B$:
\begin{enumerate}
\item The map $g_p$ fixes $0$, satisfies  $D^2g_p(0)= 2$    and sends $\{1/2,1,-1\}$ to $\{1,-1,-1/2\}$.
\item Up to   a $C^\infty$ perturbation of $(g_p)_p$,   the restriction of $g_p$ to $[-1,1/2]$ coincides with the time one flow of a smooth vector field $X_p$ on $[-1,1]$ depending $C^\infty$ on $p$.  
\end{enumerate}  

\item  
We look at a family of perturbations $(g_{p\; \eta})_p$ for $\eta$-small defined by:
\begin{enumerate}[$\bullet$]
\item  $g_{p\; \eta}$ is equal to $g_p$ on $(\frac12,\infty,-1)$,
\item  $g_{p\; \eta}$ coincides on $[-1, \frac12]$ with the time one map of the flow of the vector field $X_p+\rho\cdot \eta^2$  for a fixed $\rho\in C^\infty(\R,[0,1])$  supported by $(-1/2,1/2)$ and such that $\rho(0)=1$. 
\end{enumerate}
Then, we show that in some coordinates (given by the 
two canonical extensions of $X_p$ over $(1,\infty,-1)$) the first return map $G_{p\;\eta}$ of $g_{p\; \eta}$ in $(1,\infty,-1)$ is of the form $G_{p\;0}+ \omega^+_p(\eta)$, with $(p,\eta)\mapsto  \omega_p^+(\eta)$ of class $C^\infty$. 
 
\item  We show the existence of a $C^\infty$-family $(\Omega_p)_p$ of functions $\Omega_p\in C^\infty([0,\infty),\R)$ such that $\Omega_p(0)= 1/\pi$ and $\omega_p^+(\eta)= \Omega_p(\eta)/\eta \mod 1$ for every $p$; using an 
integral  formula on $\omega^+_p(\eta)$. 

\item We use the following KAM's Theorem of Herman-Yoccoz:

 \begin{theorem}[Herman-Yoccoz]\label{KAM}
 For every $\beta\in \R$ Diophantine, let $V_\beta$ be the set of circle diffeomorphisms in $\Diff^\infty(\R/\Z)$ whose rotation number is $\beta$. 
 Then $V_\beta$ is a smooth submanifold of codim 1.  Moreover, for every $f\in V_\beta$, the family $(f+b)_{b\in \R}$ is transverse to $V_\beta$ at $b=0$. 
\end{theorem} 
\begin{proof} By Yoccoz' Theorem \ref{AHthm}, we can assume that $f$ is the rotation of angle $\beta$. Then the Theorem is stated in  Remark 3.1.3 of \cite{Bo84}.\end{proof}
Then given a Diophantine number $\beta$, we can define implicitly an arbitrarily $C^\infty$-small function $p\mapsto \eta(p)$ (for the compact-open topology) so that the return map $G_{p\; \eta(p)}$ displays the rotation number $\beta$ for every $p\in B'$. This implies that the rotation number of $g_{p\; \eta(p)}$ is of the form $1/(N+\beta)$, and so is Diophantine as well.
In other words, with $g'_p:= g_{p\; \eta(p)}$, the  family $(g'_p)_p$  satisfies the sought properties.  
\end{enumerate}

\noindent{\bf Step 1: Setting}.
a) Let $x_p\in P^1(\R)$ be the fixed point of $g_p$ for $p\in B$. As the map $g_p$ is parabolic,
 $\partial_xg_p(x_p)=1$ and 
 $\partial_x^2g_p(x_p)\not= 0$. Hence, $x_p$ can be defined (locally) as the zero of:
\[(x,p)\mapsto   \partial_x g_p(x)-1\; .\]
Hence by the implicit function theorem, the map $p\mapsto x_p$ is of class $C^\infty$. Thus by a smooth coordinate change,  we can assume that $x_p=0$ for every $p\in B$. Then  we can conjugate the dynamics by the Moebius function $x\mapsto \frac{2\cdot x}{D^2g_p( x_p)}$ for every $p\in B$, so that   for every $p\in B$ it holds:
\[ g_p(0)=0\quad ,\quad Dg_p(0)=1\quad ,\quad  D^2g_p(0)=2\; .\]
Now we conjugate $g_p$ by a smooth family of diffeomorphisms, equal to the identity on a neighborhood of $0$ and sending $\frac12$, $g_p(\frac12)$, $g_p^2(\frac12)$ and $g_p^3(\frac12)$ to respectively $\frac12,$ $1,$ $-1 $ and $-\frac12$. 
 \begin{figure}[h]
    \centering
        \includegraphics[width=9cm]{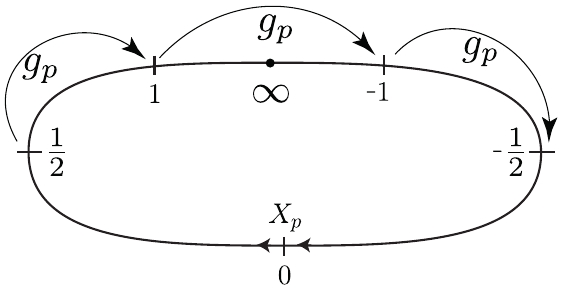}
\caption{}
      \label{notaparabolicrenor0}
\end{figure}

b) Let $d\ge 2$ and $\frak G_d= \{\sum_{j=2}^r \frac{g_j}{j!} x^j\colon(g_j)_{j=2}^d\in \R^{d-1}\} $. Given $\frak X\in \frak G_d$, let $\phi_{\frak X}$ be  the time one flow  of the vector field $x+\frak X(x)$. Let $J^d_0\phi_{\frak X}$  be the $C^d$-jet of $\phi_{\frak X}$:
\[J^d_0\phi_{\frak X} = \sum_{j=0}^d \frac{\partial^j \phi_{\frak X}}{j!} x^j= x+ \sum_{i=2}^d \frac{\partial^j \phi_{\frak X}}{j!} x^j\; .\]
In other words, $J^d_0 \phi_{\frak X}$ belongs to the space $G_d$ of $C^d$-jets of parabolic maps at $0$:
\[ G_d=\{x+\sum_{j=2}^d \frac{p_j}{j!} x^j+o(x^d)\colon(p_j)_{j=2}^d\in \R^{d-1}\}  \; .\]
\begin{proposition}
The map $\frak X\in \frak G_d\mapsto \phi_{\frak X}\in C^\infty(\R,\R)$ is smooth. 
Moreover, the following map is a diffeomorphism onto its image:
$$\Psi\colon \frak X\in \frak G_d\mapsto J^d_0\phi_{\frak X}\in G_d\; .$$
\end{proposition}
\begin{proof}
The first statement of this proposition is a simple consequence of the Cauchy-Lipschitz Theorem.  The second part of the proposition involves the Lie group theory. Indeed, the $C^d$-jet space $G_d$ endowed with the composition rules is a Lie group. Moreover it satisfies:
\begin{fact}
The group $G_d$ is connected, simply connected and nilpotent.
\end{fact}
\begin{proof}
The group $G_d$ is homeomorphic to $\R^{d-1}$, hence it is connected and simply connected.    
Let  $G^{(s)}_d := \{\phi\in G_d:\; \phi(x)= x+O(x^{s+2})\}$ for $s\ge 0$.  A computation gives $[G_d,G^{(s)}_d]\subset G^{(s+1)}_d$. As  $G_d^{(d-1)}$ is trivial, $G_d$  is nilpotent with rank $\le d-1$.  
\end{proof}
We notice that $\frak G_d$ is the Lie algebra of the group $G_d$. Moreover, the jet of  $\phi_{\frak X}$ is the image by the exponential map $\exp$ of $\frak X\in \frak G_d$.  Indeed, if $\phi_{\frak X}^t= \exp(t \cdot \frak X)$ and so $\phi_{\frak X}=\phi_{\frak X}^1$, it holds: 
$$\phi_{\frak X}^{t+\delta t}=\phi_{\frak X}^{\delta t}\circ \phi_{\frak X}^t=(id+\delta t \cdot \frak X)\circ \phi_{\frak X}^t+o(\delta  t)$$
Finally we infer that for a simply connected and nilpotent  Lie group, the exponential map  is an analytic diffeomorphism from the Lie algebra onto the group \cite[pp. 13, Thm 1.2.1]{CG90}.
\end{proof}

\begin{corollary} There exists a $C^\infty$-map $p\mapsto X_p\in \frak G_d$ so that $J^d_0 \phi_{X_p}= J^d_0 g_p$ for every $p$.\end{corollary}

%

Let us fix a bump function $\rho$ equal to $1$ on $[-\frac14,\frac14]$ and with support in $(-\frac 1 2,\frac 1 2)$, and for $\epsilon>0$ small,  put:
\[\tilde g_p\colon x\mapsto \rho(x/\epsilon) \cdot \phi_{X_p}(x)+ (1-\rho(x/\epsilon) )\cdot g_p(x)\; .\] 
\begin{lemma}
For $d$ large and then $\epsilon>0$  small, 
the family $(\tilde g_p)_{p\in B}$ is $C^\infty$-close to $(g_p)_{p\in B}$. 
\end{lemma}
\begin{proof}
We have $g_p(x)-\tilde g_p(x)= \rho( x/\epsilon)(g_p(x)- \phi_{X_p}(x))$.  For every $j\le d$, we notice that $(p,x)\in B\times [-\epsilon,\epsilon] \mapsto g_p(x)- \phi_{X_p}(x)$ has its $j^{th}$-derivative which is small w.r.t. $\epsilon^{d-j}$. 
 On the other hand, the $j^{th}$-derivative of $\rho(x/\epsilon)$ is dominated by $\epsilon^{-j}$. Hence by Leibnitz formula, the $C^{d}$-norm of $(p,x)\in B\times (-\epsilon,\epsilon)\mapsto g_p(x)-\tilde g_p(x)$ is small when $\epsilon$ is small. 
\end{proof}
Observe that  $(\tilde g_p)_{p\in B}$  satisfies Condition $(a)$, and is equal to the time one map of the flow defined by $X_p$ at the neighborhood of $0$. Using the formula $D\tilde g_p\circ X_p\circ \tilde g_p^{-1}=X_p$, we pull back  $X_p$ to extend it to $[-1,0]$ and we push forward to extend it on $[0,1]$. This 
defines a smooth family of vector field on $[-1,1]$ so that  Condition $(b)$ holds true.   
Hence we can suppose that after perturbation $(g_p)_{p\in B}$ satisfies Conditions $(a)$, $(b)$ and furthermore:
 \begin{claim}\label{ClaimX2}
 It holds $X_p(x)= x^2+O(x^3)$ for every $p$.
 \end{claim}
 \begin{proof} When  $x\to 0$,   $J^d_0 g^2_p(x)-J^d_0 g_p(x)$ is equivalent to both $\frac{D^2  g_p(0)}2 x^2=x^2$ and $X_p(x)$.\end{proof}
\noindent{\bf Step 2: Definition of  $(g_{p\; \eta})_{p,\eta}$ and uniform bound on its first return map.}
Recall that $\rho$ is  a bump function equal to $1$ on  $(-\frac1 4,\frac 1 4)$ and with support in $(-\frac12,\frac12)$. For $\eta>0$ small, let:
\[X_{p\; \eta}\colon x\in [-1,1]\mapsto  X_p(x)+\eta^2\cdot \rho(x)\; .\]
We recall that for $x$ in $[-1,\frac12]$ the time one map of the flow of $X_p$ is well defined and equal to $g_p$. Also $g_p$ sends  $-1$  to $-\frac12$ and $\frac12$ to $1$ and the images by the flow of $X_p$ of these points during  times in $[0,1]$  are respectively $[-1,-\frac12]$ and $[\frac12,1]$. Thus for $\eta$-small, the time one map $\phi^{1}_{p\; \eta}(x)$ of the flow is well defined on $[-1, \frac12]$ and coincides with $g_p$ nearby $\{-1, \frac 12\}$. This implies:
%
%
\begin{claim} The following family $(g_{p\; \eta})_{p,\eta}$ of $C^\infty$-dynamics $g_{p\; \eta}$ is well defined and smooth:
\[g_{p\; \eta}\colon x\mapsto \left\{\begin{array}{lc}\phi^{1}_{p\; \eta}(x)& \text{if } x\in [-1,\frac12]\; ,\\
 g_p(x)& \text{if } x\in (\frac12,\infty, -1)\; .\end{array}\right.\]
\end{claim}

For $\eta>0$, let us study the first return map $T_{p\; \eta}$ of $g_{p\; \eta}$ into $[1,\infty,-1]=[1,\infty,g_{p\; \eta}(1)]
$. 
To this aim,   we shall work with two different possible extensions of  $X_p|[-1,1]$ on $(1,\infty,-1)$.  Let:
\[\left\{\begin{array}{c}
X_p^+ := x\in [1,\infty,-1) \mapsto Dg_p \circ X_p\circ g_p^{-1}(x)\; ,
\\
X_p^- := x\in (1,\infty,-1] \mapsto  Dg^{-1}_p \circ X_p\circ g_p(x)\; .\end{array}\right.\]

In general $X_p^+$ and $X_p^-$ are different.  As $X_p|[-1,1]$ is equal to $X_{p\; \eta}| [-1,-\frac12]\cup [ \frac12,1]$, it holds : 
\begin{fact}
The vector field  $X_p^+$  extends smoothly $X_{p\; \eta}|(-1,1)$ to one denoted  by $X^+_{p\; \eta}$ on 
 $P^1(\R)\setminus \{-1\}$.
 Also,  $X_p^-$ extends smoothly $X_{p\; \eta}$ to one denoted by $X^-_{p\; \eta}$ on $P^1(\R)\setminus \{+1\}$.
\end{fact}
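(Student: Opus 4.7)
The plan is to verify $C^\infty$-smoothness of the proposed piecewise definition by reducing, at the gluing point, to the $g_a$-invariance of the original vector field $X_a$. I will only sketch the case of $X^+_{a\;\eta}$; the construction of $X^-_{a\;\eta}$ is entirely symmetric, obtained by swapping the roles of $g_a$ and $g_a^{-1}$.

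First I would observe that since the bump function $\rho$ is supported in $(-1/2,1/2)$, the formula $X_{a\;\eta}=X_a+\eta^2\cdot\rho$ makes sense on all of $N=(-1,1)$ (extending by $X_a$ wherever $\rho=0$), and in particular $X_{a\;\eta}$ agrees with $X_a$ on a full neighborhood of $\{1\}$ inside $N$. Therefore the smoothness of the combined field $X^+_{a\;\eta}$, equal to $X_{a\;\eta}$ on $(-1,1)$ and to $X_a^+$ on $[1,\infty,-1)$, reduces to two checks: smoothness of $X_a^+$ on the whole arc $[1,\infty,-1)$ (including at $\infty\in\hat\R$), and smooth matching of $X_a$ with $X_a^+$ across the single common endpoint $\{1\}$.

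The first check is immediate: $X_a^+$ is the pushforward of $X_a|_{[1/2,1)}$ by the $C^\infty$-diffeomorphism $g_a$ of $\hat\R$, hence is itself a $C^\infty$ vector field on its whole domain, smoothness at $\infty$ included. For the matching at $\{1\}$, I would invoke the invariance relation~\eqref{inv}, rewritten as $X_a(g_a(y))=Dg_a(y)\cdot X_a(y)$ for $y\in N\cap g_a^{-1}(N)$. Letting $x=g_a(y)$ with $y$ ranging in a two-sided neighborhood $U$ of $1/2$ inside $N$, one sees that the explicit formula defining $X_a^+$ is actually well-defined on the neighborhood $g_a(U)$ of $\{1\}$ in $\hat\R$ and, by the invariance, coincides with $X_a$ on $g_a(U)\cap N$. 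Thus $X_a^+$ and $X_a$ agree as germs at $\{1\}$, so the piecewise vector field $X^+_{a\;\eta}$ is $C^\infty$ across $\{1\}$, hence on all of $\hat\R\setminus\{-1\}$.

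The only point requiring attention, though not a genuine obstacle, is recognizing that even though $[1,\infty,-1)$ and $N$ share only the single boundary point $\{1\}$, the defining formula of $X_a^+$ extends naturally to a two-sided neighborhood of $1$ via the invariance equation; it is this hidden overlap that secures $C^\infty$-matching rather than mere continuity.
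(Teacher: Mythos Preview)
Your proposal is correct and follows the same approach as the paper: the paper's entire justification is the one-line observation preceding the Fact, namely that $X_a$ coincides with $X_{a;\eta}$ on $\cup_{i\in\{-1,0,1\}}g_a^i([1,\infty,-1])=[1/2,\infty,-1/2]$ because $\rho$ is supported in $(-1/2,1/2)$, which together with the invariance relation~\eqref{inv} forces the pushforward $X_a^+$ to agree with $X_a$ across the gluing point. You have simply spelled out this reasoning in detail, including the point about the hidden two-sided overlap at $\{1\}$ that the paper leaves implicit.
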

We now study the first return map $T_{p\, \eta}$ in $(1,\infty,-1]$ induced by $g_{p\; \eta}$. The idea is to glue the endpoints $1$ and $-1=g_{p\, \eta} (1)$ of this interval by the dynamics, so that the quotient $(1,\infty,-1]/\sim$ is a circle, and the action of $T_{p\, \eta}$ on it  enjoys nice bound bounds when $\eta\to 0$.  

Let $N=N(\eta)$ be the first return time  of $-1$ into $(1,\infty,-1]$, let  $\omega_p(\eta)=g_{p\; \eta}^N(-1)\in (1,\infty,-1]$. As $g_{p\, \eta}$ is orientation preserving, the point at the left of $-1$ (see \cref{notaparabolicrenor}) are sends by $g_p^N$ at the left of $ 
\omega_p(\eta)$ until they exit of $[-1, \infty , 1]$. Let 
$\alpha_p(\eta)\in [1,\infty,-1]$ be the point sent by $g_{p\; \eta}^N$  to $1$. Hence:
\[T_{p\, \eta}(-1)=\omega_p(\eta)\qand  T_{p\, \eta}(\alpha_p(\eta))\sim 1\]
\begin{figure}[h]
    \centering
        \includegraphics[width=9cm]{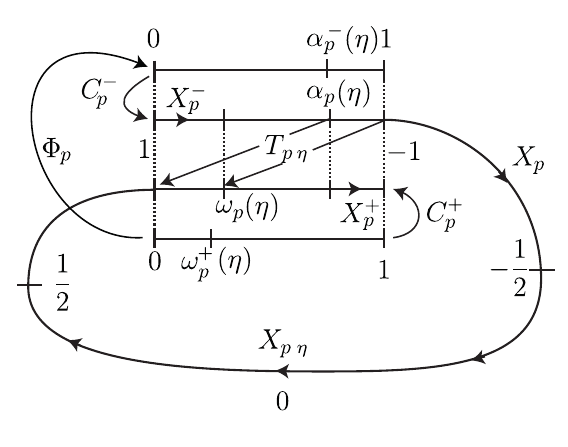}
    \caption{Notations for the parabolic renormalization.}
      \label{notaparabolicrenor}
\end{figure}
%
To bound  $T_{p\, \eta}$, we consider the diffeomorphisms $C_p^+$ and $C_p^-$ from $[0,1]$ onto  $ [1,\infty,-1]$ 
which at $t\in [0,1]$ associate the image of  $1$  by the time $t$ of the  flow of respectively\footnote{In this definition, we considered the continuous extensions of $X_p^+|[1,\infty,-1)$ and $X_p^-|(1,\infty,-1]$ to $[1,\infty,-1]$.} $X_p^+$ and  $X_p^-$. 
\[C_p^+: [0,1] \to [1,\infty,-1]\qand C_p^-: [0,1] \to [1,\infty,-1]\]
Remark also that both $C_p^\pm$  do not depend on $\eta$ since $X_p^\pm$ does not depend of $\eta$.

Let $T^-_{p\; \eta} :=(C_p^-)^{-1} \circ T_{p\; \eta} \circ  C_{p}^{-}$ be the first return map $T_{p\; \eta}$ seen in the coordinates $C_p^-$. 
Let $\alpha^\pm_p(\eta)$ and $\omega^\pm_p(\eta)$ be the preimages of $\alpha_p(\eta)$ and $\omega_p(\eta)$ by $C_p^\pm$.  We observe that (see \cref{notaparabolicrenor}):
\[T^-_{p\; \eta} (1)= \omega_p^-(\eta)\quad \text{and}
\quad T^-_{p\; \eta} (\alpha_p^-(\eta))=0\; .\]


 We define the following coordinates change:
\[\Phi_p\colon [0,1]\mapsto (C_{p}^-)^{-1}\circ C_{p}^+(t)\in [0,1]\; .\]
%
%
%
%
%

\begin{claim}\label{claim 5.9}
The first return map $T^-_{p\; \eta}$ satisfies:
\[\left\{\begin{array}{cc}
T^-_{p\; \eta}(s)= \Phi_p(s+1-\alpha^-_p(\eta)) = \Phi_p(s+\omega^+_p(\eta) ) &\text{if } s<\alpha_p^-(\eta)\; ,\\
T^-_{p\; \eta}(s)= \Phi_p(s-\alpha_p^-(\eta))= \Phi_p(s+\omega^+_p(\eta) -1)&\text{if } s>\alpha_p^-(\eta)\; .\\
\end{array}\right.\]
\end{claim}
\begin{proof}
As depicted in \cref{notaparabolicrenor},  there is a segment which projects  canonically twice on $[-1,\infty, 1]$ and once on $[-1,1]$, so that $X_p^+$, $X_p^{-1}$ and $X_{p\, \eta}$ define a smooth vector field on it. The image of $\alpha_p(\eta)$ and $-1$ by the time $N$ of its flow  are respectively $1$ and $\omega_p(\eta)$. Hence the time to needed go from $\alpha_p(\eta)$ to $-1$ equal to the the times needed to go from $1$ to $\omega_p(\eta)$. This means:
%
%
\[ 1-\alpha_p^-(\eta)= \omega_p^+(\eta)\; .\] 
If $s>\alpha_p^-(\eta)$, then the first return of $x= C_p^-(s)$ in $[1,\infty,-1]$ is the image by the flow  $X_p^+$ of $1$ after a time $s-\alpha_p^-(\eta)$. In the coordinate $C_p^+$, it is  $s-\alpha_p^-(\eta)$. In the coordinate $C_p^-$, it is $\Phi_p(s-\alpha_p^-(\eta))$. 

If $s<\alpha_p^-(\eta)$, then the first return of $x= C_p^-(s)$ in $[1,\infty,-1]$ is the image by the flow  $X_p^+$ of $\omega_p(\eta)$ after a time $s$. In the coordinate $C_p^+$, it is  $\omega^+_p(\eta) +s$. In the coordinate $C_p^-$, it is $\Phi_p(\omega^+_p(\eta) +s)$.\end{proof}

\begin{claim} After gluing the endpoints of $[0,1]$ by the translation by $1$, the map $T^-_{p\; \eta}$ is a smooth map of the circle $\R/\Z$, and the family $(T^-_{p\; \eta})_{p, \eta}$ is of class $C^\infty$. 
\end{claim}
\begin{proof}
It is classical \cite{Yo95} that the first return map $g_{p\; \eta}$ into $[1,\infty, g_{p\, \eta}(1)]=[1,\infty, -1]$ is projected to a smooth map  of the circle obtained by gluing the endpoints of $[1,\infty, g_{p\, \eta}(1)]$  using $g_{\eta\; a}$. Seen in the chart $C_p^-$, this corresponds to glue the endpoints of $[0,1]$ using the translation by $+1$. 
In this specific parabolic context, this map was called "the essential map"  in \cite{ST00}.
\end{proof}

With the conjugacy $s\mapsto s+\omega^+_p(\eta)$ and by Claim \ref{claim 5.9}, we obtain:\begin{corollary}
The first return map $T_{p\; \eta}$ of $g_{p\, \eta}$  is smoothly conjugated to:
\[R_{p\; \eta}\colon s\in \R/\Z\mapsto \Phi_p(s)+\omega^+_p(\eta)\in \R/\Z\]
\end{corollary}

\noindent{\bf Step 3.}
We shall study the derivative of $\omega_p^+(\eta) \mod 1$ with respect to $\eta$   when $\eta\to 0$.  We recall that the time needed for the flow $X_{p\; \eta}$ to go from $-1$ to $1$, is:
\[\tau_p(\eta):=\int_{[-1,1]} \frac{1}{X_{p\; \eta}} d\leb\; .\]
We notice that $\omega_p^+(\eta)+\tau_p(\eta)= N\in \Z$ and so:
\[\omega_p^+(\eta)= -\tau_p(\eta)\mod\, 1 \; .\]

By Claim \ref{ClaimX2}, there exists $X^1_{p,\eta}\in C^\infty ([-1,1],\R)$ such that:
\[X_{p,\eta} (x)= x^2  X^1_{p,\eta}(x)+\eta^2\; .\]
As $g_p$ has a unique fixed point at $0$, the field $X^1_{p\; 0}$ is positive on $[-1,1]$, with value $1$ at $0$ by Claim \ref{ClaimX2}. Thus for $\eta$ small, there exists  $C>0$ such that:
\begin{equation}\label{minoration}
X^1_{p,\eta}(0)=1\quad \text{and}\quad  X^1_{p,\eta}\ge C>0\; .\end{equation}
In these notations, it holds:
\[\eta\cdot\tau_p(\eta)= \int_{[-1,1]} \frac\eta{s^2\cdot X^1_{p\; \eta}(s) +\eta^2} ds\; .\]

We put $s= \eta \cdot t$ and we have:
\[ \eta\cdot \tau_p(\eta) =\int_{-1/\eta}^{1/\eta} \frac1{1+t^2 X^1_{p\, \eta}(\eta t) }dt,\quad \forall \eta\not=0\; .\]  

Let $\Psi(t,p,\eta):= 1+t^2 X^1_{p\, \eta}(\eta t) $. 
\begin{lemma}\label{Cn}
For every $n\ge 0$, there exists $C_n>0$ so that for every $p\in B'$ and $\eta$ small:
\[\left| \partial^n_{p\,\eta} \frac1{\Psi(t,p,\eta)}\right| \le 
\frac{C_n}{1+t^2}\]
\end{lemma}
\begin{proof}
The case $n=0$ is an immediate consequence of Inequality (\ref{minoration}).  Let $n\ge 0$ and assume by induction that Lemma \ref{Cn} holds for every $k\le n$. By Leibniz formula  applied to $\Psi/\Psi$ , it holds:
\[\sum_{k=0}^{n+1} C^k_{n+1} 
 \partial^{k}_{p\, \eta} \frac1{\Psi(t,p,\eta)}\cdot 
\partial^{n+1-k}_{p\, \eta} {\Psi(t,p,\eta)}=0\]
\[\Rightarrow
\partial^{n+1}_{p\, \eta} \frac1{\Psi(t,p,\eta)}=
-\frac1{\Psi(t,p,\eta)}  \sum_{k=0}^n C^k_{n+1} 
 \partial^{k}_{p\, \eta} \frac1{\Psi(t,p,\eta)}\cdot 
\partial^{n+1-k}_{p\, \eta} {\Psi(t,p,\eta)}\; .
\]
It is easy to see that for $0\le k\le n$, the derivative $\partial^{n+1-k}_{p\, \eta} {\Psi(t,p,\eta)}$ is bounded by a certain $C'\cdot t^2$. Hence the induction hypothesis gives:
 \[\left| \partial^{n+1}_{p\, \eta} \frac1{\Psi(t,p,\eta)}\right|\le 
  \frac{C_0}{1+t^2}   \sum_{k=0}^n C'\cdot  t^2\cdot  \frac{ C_k}{1+t^2}\]
 Hence the above sum is bounded from above by 
$C_{n+1} \frac1{1+t^2} $ for $C_{n+1}\in \R$ independent of $t$, $p\in B$ and $\eta$ small. 
\end{proof}

We notice that by the dominated function theorem, the following function is of class $C^\infty$:
\[(p,\eta)\mapsto \eta\cdot \tau_p(\eta) \; .\]
 Moreover it holds:
\[\lim_{\eta\to 0}  \eta \cdot \tau_p(\eta)  = \int_\R \frac1{\Psi(t,p,0)}dt= 
\int_\R \frac1{1+t^2}dt= \pi\; .\]
Consequently it holds:
\begin{fact}
There exists a $C^\infty$-family $(\Omega_p)_{p}$ of $C^\infty$-functions  $\Omega_p\in C^\infty([0,\infty),\R)$ so that 
$$\omega_p^+(\eta)=   \Omega_p(\eta)/\eta  \mod 1\; .$$
\end{fact}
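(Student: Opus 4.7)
The plan is to define $\Omega_a(\eta) := -\eta\tau_a(\eta)$ on a right-neighborhood of $0$ (smoothly extended to all of $[0,\infty)$) and read off the fact from the identity $\omega_a^+(\eta)+\tau_a(\eta)\in\Z$ established a few lines above, which gives immediately
\begin{equation*}
\omega_a^+(\eta) \;\equiv\; -\tau_a(\eta) \;=\; \frac{\Omega_a(\eta)}{\eta} \pmod{1}\,.
\end{equation*}
All the substantive content is therefore the assertion that $(a,\eta)\mapsto \eta\tau_a(\eta)$ extends to a $C^\infty$ function on $V'\times[0,\delta)$ for some $\delta>0$, with value $\pi$ at $\eta=0$.

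The main obstacle is that in the integral representation $\eta\tau_a(\eta)=\int_{-1/\eta}^{1/\eta}\Psi(t,a,\eta)^{-1}\,dt$ the domain of integration itself depends on $\eta$, so dominated convergence and Lemma \ref{Cn} cannot be applied verbatim. I would circumvent this by extending $X^1_{a,\eta}$ smoothly from $[-1,1]$ to a strictly positive family $\bar X^1_{a,\eta}\geq C>0$ defined on all of $\R$, coinciding with $X^1_{a,\eta}$ in a fixed neighborhood of $0$, and setting $\bar\Psi(t,a,\eta):=1+t^{2}\bar X^1_{a,\eta}(\eta t)$. The proof of Lemma \ref{Cn} applies verbatim to $\bar\Psi$ (it uses only the positive lower bound on $X^1$), so $|\partial^{n}_{a,\eta}\bar\Psi^{-1}|\le C_n/(1+t^2)$ uniformly for $t\in\R$, $a\in V'$, and $\eta\in[0,\delta)$. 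Differentiation under the integral then shows that
\begin{equation*}
\bar H(a,\eta) := \int_{\R}\frac{dt}{\bar\Psi(t,a,\eta)}
\end{equation*}
is $C^\infty$ on $V'\times[0,\delta)$ with $\bar H(a,0)=\int_\R(1+t^2)^{-1}dt=\pi$.

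It remains to check that the tail $\bar H(a,\eta)-\eta\tau_a(\eta)=2\int_{1/\eta}^{\infty}\bar\Psi(t,a,\eta)^{-1}\,dt$ is also $C^\infty$ on $V'\times[0,\delta)$ and vanishes at $\eta=0$. The substitution $u=\eta t$ rewrites it as $2\eta\int_{1}^{\infty}\frac{du}{\eta^{2}+u^{2}\bar X^1_{a,\eta}(u)}$; the integrand and all its $(a,\eta)$-derivatives are now $C^\infty$ on $V'\times[0,\delta)$ on the fixed domain $u\in[1,\infty)$, and they are dominated there by integrable functions of the order $C/u^{2}$ thanks to the uniform lower bound $\bar X^1\geq C$. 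One more application of dominated convergence finishes the smoothness of the tail, and hence of $(a,\eta)\mapsto\eta\tau_a(\eta)$ on $V'\times[0,\delta)$; the Fact then follows from the first paragraph, after arbitrarily smoothly extending $\Omega_a$ beyond $[0,\delta)$.
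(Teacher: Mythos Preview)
Your proposal is correct and follows the same approach as the paper: set $\Omega_a(\eta):=-\eta\,\tau_a(\eta)$ and use the relation $\omega_a^+(\eta)\equiv -\tau_a(\eta)\pmod 1$. The paper's own proof is a one-liner that simply invokes the preceding discussion (Lemma~\ref{Cn} plus dominated convergence) to declare $(a,\eta)\mapsto\eta\,\tau_a(\eta)$ smooth; you are in fact more careful than the paper, since you notice that the integration domain $[-1/\eta,1/\eta]$ is $\eta$-dependent and handle this by extending $X^1_{a,\eta}$ to all of $\R$ and splitting off a tail term, whereas the paper glosses over this point.

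Two small wording fixes: for the identity $\bar H(a,\eta)-\eta\tau_a(\eta)=\int_{|t|>1/\eta}\bar\Psi^{-1}\,dt$ to hold you need $\bar X^1_{a,\eta}$ to agree with $X^1_{a,\eta}$ on all of $[-1,1]$, not merely near $0$; and since $\bar\Psi$ need not be even in $t$, the tail is $\eta\int_{|u|>1}\frac{du}{\eta^2+u^2\bar X^1_{a,\eta}(u)}$ rather than literally $2\eta\int_1^\infty$. Neither affects the argument.
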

\begin{proof}
Indeed from the above discussion, the number  $\Omega_p(\eta):= -\eta \cdot \tau_p(\eta) $ depends smoothly on $p\in \R$ and $\eta\ge 0$.
\end{proof}
\noindent{\bf Step 4.}
We recall that the coordinates change $\Phi_p$ projects to a $C^\infty$-diffeomorphism of the torus $\R/\Z$, and $(\Phi_p)_p$ is of class $C^\infty$.  
 Here is an immediate consequence of Herman-Yoccoz'  \cref{KAM}:
\begin{corollary}
For every   Diophantine number $\beta$, there exists a $C^\infty$-function $p\in B \mapsto \theta(p)$  such  that the map $s\mapsto \Phi_p(s)+\theta(p)\in \R/\Z$ has rotation number equals to $\beta$.
\end{corollary}  
 Hence it remains to solve implicitly:
\[\omega_p^+(\eta(p))=\theta(p)\mod 1\Longleftrightarrow \frac{\Omega_p(\eta(p))}{\eta(p)}= \theta(p) \mod1\; .\]
As $\Omega_p(0)=\pi$, for every $p_0\in B$, there exists $\eta(p_0)$ arbitrarily small so that 
\[\frac{\Omega_p(\eta(p_0))}{\eta(p_0)}= \theta(p_0)\mod 1\; .\]
Note that the following derivative is large for $\eta$ small (since   $\Omega_p(0)=\pi$ and $\partial_\eta\Omega_p(0)$ is bounded). 
\[\partial_\eta \left( \frac{\Omega(\eta)}{\eta}\right)= \frac{\eta \cdot \partial_\eta\Omega_p(\eta)-\Omega_p(\eta)}{\eta^2}\sim - \frac\pi{\eta^2}\; .\]

Hence, the implicit function theorem enables us to conclude that for every $B'\Subset B$ there exists a small smooth function  $p\in B'\mapsto \eta(p)$ so that $R_{p\; \eta(p)}$ has rotation number equal to $\beta$. 
Then, $g_{p\, \eta(p)}$ has a rotation number of the form $  1/(N+\beta)$ which is Diophantine as well.  Note that $(g_{p\, \eta(p)})_{p\in B'}$ is $C^\infty$-close to $(g_p)_{p\in B'}$. Thus Theorem \ref{thmS4} is proved.
$\qed$

\begin{appendix}
 \section{Extrinsic definition of $(\lambda)$-blender and $C^r$-$(\lambda)$-parablender}\label{appendix}

Let us give for the first time the extrinsic definition of the $\lambda$-blender, $C^r$-parablender and $\lambda$-$C^r$-parablender in the diffeomorphism case. The endomorphisms cases of these objects (but not their $\lambda$-version) were extrinsically defined in \cite{BE15,BCP16}.

Let $f$ be a diffeomorphism of a manifold $M$ and let $K$ be a hyperbolic basic set. We assume that  $K$ is partially hyperbolic with a contracting central direction: there exists a $Df$-invariant  splitting $TM|K= E^{ss} \oplus E^{c} \oplus E^u$ such that for every $x\in K $:
\[ \|D_xf|E^{ss} \|< \|(D_xf|E^{c})^{-1} \|^{-1}\le  \|(D_xf|E^{c}) \|<1 < \|(D_xf|E^{u})^{-1} \|^{-1}\]
We fix a continuous family of strong stable and unstable manifolds $(W^{ss}_{loc} (x; f))_{x\in K}$ and  $(W^{u}_{loc} (x; f))_{x\in K}$. 
The lamination $W^{u}_{loc} (K; f):= \bigcup_{x\in K} W^{u}_{loc} (x; f)$ is invariant by $f^{-1}$, and  so is its tangent bundle. 
So for every $y\in W^{u}_{loc} (K; f)$, with $y_{-n}:= f^{-n}(y)$ we can consider the action $[D_{y_{-n}} f^n]$ of $D_{y_{-n}} f^n$ on the quotient vector spaces 
$T_{y_{-n}} M/ T_{y_{-n}}W^{u}_{loc} (K; f)$ onto $T_{y} M/ T_{y}W^{u}_{loc} (K; f)$. 

\begin{definition}[$(\lambda)$-Blender]
The hyperbolic compact set $K$ is  a \emph{blender} if there is  $C^1$-neighborhood $V^{ss}$ of a strong local stable manifold of $K$   such that for every $C^1$-perturbation $\tilde f$ of $f$ and every $W\in V^{ss}$ there exists $x\in K$ such that $W$ intersects $W^{u}_{loc}(x; \tilde f)$.

The blender  $K$  is a \emph{$\lambda$-blender} if moreover $\dim E^c=1$ and there exists a nonempty  open subset $U^\lambda\subset \R$ 
 such that  for every $C^1$-perturbation $\tilde f$ of $f$ and every $W\in V^{ss}$ and $\ell \in U^\lambda$, there exists $x\in K$  such that $W$ intersects $W_{loc}^{u}(x; \tilde f)$ at a point $y$ and it holds:
 \[\lim_{n\to \infty} \frac1n \log \|[D_{y^{-n}} \tilde f^{ n}]\|=\ell,\quad  \text{with  }y^{-n}:= \tilde f^{-n}(y)\; .\]
 \end{definition}
 
Let us now give the parametric version of these definitions. Assume that $(f_p)_{p\in B_k}$ is  a $C^r$-family such that $f_0= f$ and assume that the hyperbolic continuation $K_p$ of $K=K_0$ is well defined for every $p\in B_k$. Given $x_0\in K_0$ we denote $x_p\in K_p$ its continuation for $p\in B_k$.

\begin{definition}[$(\lambda)$-$C^r$-Parablender]
The continuation $(K_p)_{p\in B_k}$ is  a \emph{$C^r$-parablender} at $  {p_0}\in B_k $
 if the following condition is satisfied.
There is  a $C^r$-neighborhood $\hat V^{ss}$ of the continuation $(W^{ss}_{loc} (z_p; f_p))_p$ of a strong stable manifold   a point $z_0\in K_0$ such that for every $(\tilde f_p)_p$ which is $C^r$-close to $(f_p)_p$ and every $(W_p)_p\in \hat V^{ss}$, there exist a $C^r$-family of points $\hat P=(P_p)_p$ in $(W_p)_p$ and a $C^r$-family of points $ (Q_p)_p$ in $(W^u_{loc} (x_p; \tilde f_p))_{p}$ with  $x_p\in K_p$ such that:
\[J^r_{p_0} (Q_p)_p = J^r_{p_0} ( P_p)_p\; .\]

This continuation  $(K_p)_{p\in B_k}$  is a \emph{$\lambda$-$C^r$-parablender}  at $p_0$ if moreover $\dim E^c=1$ and there exists a nonempty  open subset $J^r_{p_0}U^\lambda\subset J^{r-1}_{p_0} \R$   such that   for every $(\tilde f_p)_p$ which is $C^r$-close to $(f_p)_p$ and every $(W_p)_p\in \hat V^{ss}$   and $\ell \in J^r_{p_0} U^\lambda$,  there exist a $C^r$-family of points $\hat P=(P_p)_p$ in $(W_p)_p$ and a $C^r$-family of points $ (Q_p)_p$ in $(W^u_{loc} (x_p; \tilde f_p))_{p}$ with  $x_p\in K_p$ such that:
\[J^r_{p_0} (Q_p)_p = J^r_{p_0} ( P_p)_p\qand  \lim_{n\to \infty} J^{r-1} \frac1n \log \|[D_{y_p^{-n}} \tilde f_p^{n}]\|=\ell,\quad  \text{with  }y_p^{-n}:= \tilde f_p^{-n}(y_p)\; .\]
 \end{definition}
\end{appendix}
\bibliographystyle{alpha}
\bibliography{references}

   \end{otherlanguage}
\end{document}